\def\rnum#1{\expandafter{\romannumeral #1}}
\def\Rnum#1{\uppercase\expandafter{\romannumeral #1}} 
\theoremstyle{plain}
\newtheorem{dfn}{Definition}[section]
\newtheorem{thm}{Theorem}[section]
\newtheorem{pro}{Proposition}[section]
\newtheorem{cor}{Corollary}[section]
\newtheorem{lem}{Lemma}[section]
\newtheorem{rem}{Remark}[section]
\newcommand{\Rn}{\mathbb{R^{\textit{n}}}}
\newcommand{\N}{\mathbb{N}}
\newcommand{\Z}{\mathbb{Z}}
\newcommand{\R}{\mathbb{R}}
\newcommand{\lan}{\langle}
\newcommand{\ran}{\rangle}
\newcommand{\Sp}{\mathcal{S}}
\newcommand{\D}{\mathscr{D}}
\newcommand{\tkappa}{\tilde{\kappa}}
\newcommand{\hphi}{\hat{\phi}}
\newcommand{\sh}{\mathscr{S}}
\begin{document}
\title{A sharp sparse domination of pseudodifferential operators  }
\author{Ryosuke  Yamamoto\footnote{Department of Mathematical Sciences, Nagoya University, Aichi, 464-8601, Japan.
\newline e-mail: \texttt{d20004k@math.nagoya-u.ac.jp}. 
\newline 
\newline
\newline {\bf Keywards} sparse domination, Pseudodifferential operator.
}}   

\date{}
\maketitle

\begin{abstract}
In this paper, we give a sharp sparse domination of pseudodifferential operators associated with  symbols belonging to the H\"{o}rmander class, and fundamental solutions of dispersive equations. Furthermore, we give boundedness results of these operators on weighted Besov spaces by using the sparse domination.
\end{abstract}

\section{Introduction and results}
For any $m \in \mathbb{R}$ and $0\le \rho , \delta \le 1$, the H\"{o}rmander class $S^{m}_{\rho , \delta}$ is defined as the set of all $a\in C^{\infty}({\mathbb{R}}^{2n})$ such that
\begin{eqnarray*}
|{\partial}^{\beta}_{x}{\partial}^{\alpha}_{\xi} a(x,\xi)| \lesssim {(1 + |\xi|)}^{m-\rho |\alpha| + \delta |\beta|}
\end{eqnarray*}
for any $(x,\xi) \in {\mathbb{R}}^{2n}$. Here, $A \lesssim B$ means $A \le CB$ with a positive constant $C>0$. For given $a \in S^{m}_{\rho,\delta}$, we define the pseudodifferential operator $a(x,D)$ by
\begin{eqnarray*}
\displaystyle a(x,D)f(x)= \frac{1}{(2 \pi )^{n}}\int_{{\mathbb{R}}^{n}} e^{i x \xi } a(x, \xi ) \hat{f} (\xi )d{\xi} ,
\end{eqnarray*}
where $f \in \sh $ and $\hat{f}$ denotes the Fourier transform of $f$. Pseudodifferential operator is a useful tool for study of partial differential equations, and the many boundedness results are known. The most basic result is the $L^p$- boundedness given by H\"{o}rmander \cite{H-1} and Fefferman \cite{F}. H\"{o}rmander \cite{H-1} showed that $m \le -n(1-\rho)|1/2 -1/p|$ is necessary for $a(x,D)$ with $a \in S^{m}_{\rho ,\delta}$ to be $L^p$- bounded. Conversely, the $L^p$-boundedness of $a(x,D)$ with $a \in S^{m}_{\rho,\delta}$ and $m = -n(1-\rho)|1/2 -1/p|$ was established by Fefferman \cite{F}. As for the boundedness on Lebesgue spaces weighted by $\omega \in A_p$ which is so called Muckenhoupt weight, Miller \cite{M} established the $L^p(\omega)$-boundedness of $a(x,D)$ with $ a \in S^{0}_{1,0}$. For general $a \in S^{m}_{\rho,\delta}$, Michalowski, Rule and Staubach \cite{M-R-S-2} showed the $L^p(\omega)$-boundedness of $a(x,D)$ with $a \in S^{-n(1-\rho)}_{\rho , \delta}$ and $ \omega \in A_p$. Chanillo and Torchinsky \cite{C-T} showed it for a larger class $a \in S^{-n(1-\rho)/2}_{\rho,\delta}$ ($0\le \delta< \rho \le 1  $) and a smaller class $\omega \in A_{p/2}$, and Michalowski, Rule and Staubach \cite{M-R-S-1} showed the same result for $0< \delta= \rho <1$. It should be mentioned here that Beltran \cite{B} showed it for $a\in S^{m}_{\rho, \rho}$ with $-n(1-\rho)/2<m<-n(1-\rho)|1/2-1/p|$ and $\omega \in  A_{p/2} \cap {RH}_{(2t'/p)'} $, where $2 \le p <2t'$ and $t'$ is the conjugate exponent of $t=-n(1-\rho)/(2m)$. We remark that there is no such $p$ that satisfies $2 \le p <2t'$ for the critical exponent $m=-n(1-\rho)|1/2-1/p|$. An important idea to deduce weighted estimates is to show pointwise estimates. For example, Chanillo and Torchinsky \cite{C-T} established pointwise estimate
\begin{eqnarray*}
| {(a(x,D)f)}^*(x)| \lesssim  M_2 f(x) 
\end{eqnarray*}
for $a\in S^{-n(1- \rho)/2}_{\rho,\delta}$ ($0\le \delta <\rho \le1$), where ${(a(x,D)f)}^*$ denotes the sharp maximal function of $a(x,D)f$.

Recently as a refinement of pointwise estimates, the theory of sparse domination of operators was developed by Lerner \cite{L}. For operators $T$ on function spaces, the sparse domination means the inequalities:
\begin{eqnarray*}
\ \ |Tf(x)| \lesssim {\Lambda}_{ \mathcal{S} , r}f(x)  \ \ {\textstyle{and}} \ \ | \langle{Tf,g}  \rangle|\lesssim {\Lambda}_{{\mathcal{S}},r,s'}(f,g) .
\end{eqnarray*}
In particular, we call the first one \textit{sparse bounds} and the second one \textit{sparse form bounds}. See below for the definition of ${\Lambda}_{ \mathcal{S} , r}$ and ${\Lambda}_{{\mathcal{S}},r,s'}$.
\begin{dfn}
Let $\eta \in (0,1)$. A collection $\mathcal{S}$ of cubes in ${\mathbb{R}}^{n}$ is $\eta$-sparse family if there are pairwise disjoint  subsets  ${\{ E_Q\}}_{Q \in \mathcal{S} }$ such that $ E_{Q} \subset Q $, and $|E_{Q}|>\eta |Q|$. 
\end{dfn} 
We often just say \textit{sparse} instead of \textit{$\eta$-sparse} whenever there is no confusion. For any cube $Q$ and $p \in [1,\infty) $, we define $ {\langle f \rangle }_{p,Q} := {|Q|}^{-\frac{1}{p}} {||f||}_{ L^p(Q)} $. For a sparse collection $\mathcal{S}$ and $ r,s \in [1 ,\infty)$ , the $(r,s)$-sparse form operator ${\Lambda}_{\mathcal{S} , r,s}$ and $r$-sparse operator $ {\Lambda}_{ \mathcal{S} ,r}$ are defined by
\begin{eqnarray*}
 {\Lambda}_{ \mathcal{S} ,r}f(x) :=\sum_{Q \in \mathcal{S}} {\langle f \rangle }_{r,Q} 1_Q (x) \ \ , \ \ {\Lambda}_{\mathcal{S} , r,s}(f,g) := \sum_{Q \in \mathcal{S}} |Q| {\langle f \rangle }_{r,Q}{\langle g \rangle }_{s,Q}  
\end{eqnarray*}
for all $ f,g \in L^{1}_{loc} $. If $r<p<s$, we have 
\begin{eqnarray*}
{\Lambda}_{\mathcal{S} , r,s'}(f,g) \lesssim {||f||}_{p} {||g||}_{p'}.
\end{eqnarray*}
This inequality is easily obtained from the $L^p$-boundedness of $r$-Hardy Littlewood maximal operator $M_r$ which is defined by $M_r f(x) =\sup_ {Q\ni x} {\langle f \rangle}_{r,Q}$. Furthermore, weighted inequality with Muckenhoupt weights is deduced from sparse domination. Bernicot, Frey and Petermichl \cite{B-F-P} showed
\begin{eqnarray*}
{\Lambda}_{\mathcal{S} , r,s'}(f,g) \lesssim {(  {[\omega]}_{ A_{p/r}} {[\omega]}_{ {RH}_{ (s/p)'}})}^{\alpha} {||f||}_{L^p(\omega)} {||g||}_{ L^{p'}( {\omega}^{1-p'})} ,
\end{eqnarray*}
where $\alpha = \max (\frac{1}{p-r} , \frac{s-1}{s-p})$, $ {[\omega]}_{A_q} = {\sup}_{Q}   {\langle \omega \rangle }_{1,Q} { \langle {\omega}^{1-q'} \rangle}^{q-1}_{1,Q}$ and ${[\omega]}_{ {RH}_q} = {\sup}_{Q}   {\langle \omega \rangle }^{-1}_{1,Q}  \langle {\omega \rangle}_{q,Q}$ for any $1<q< \infty$. From these observations, sparse domination is used to study the weighted boundedness of operators, and Lerner \cite{L} gave the simple proof of $A_2$ conjecture which means 
\begin{eqnarray*}
{||Tf||}_{L^2(\omega)} \lesssim { [ \omega]}_{A_2} {||f||}_{L^2(\omega)},
\end{eqnarray*}
where $T$ denotes the Calder\'{o}n-Zygmund operators. The $A_2$ conjecture was studied by many researchers. For example, Petermichl \cite{P-1}, \cite{P-2} solved the $A_2$ conjecture for Hilbert transform and Riesz transform, and Perez, Treil and Volberg \cite{P-T-V} gave
\begin{eqnarray*}
{||Tf||}_{L^2(\omega)} \lesssim { [ \omega]}_{A_2} \log(1+{ [\omega]}_{A_2} ) {||f||}_{L^2(\omega)}
\end{eqnarray*}
for general Calder\'{o}n-Zygmund operators. Finally, $A_2$ conjecture was completely solved by Hyt\"{o}nen \cite{Hy}. Lerner \cite{L} gave another proof by establishing
\begin{eqnarray*}
{||Tf||}_X \lesssim \sup_{\Sp}{||{\Lambda}_{ \mathcal{S} , 1}f||}_X   
\end{eqnarray*}
for any Banach function space $X$, and it was improved to the pointwise estimate
\begin{eqnarray*}
|Tf(x)| \lesssim {\Lambda}_{ \mathcal{S} , 1}f(x)
\end{eqnarray*}
by Lerner \cite{L-2016}, Lerner and Nazarov \cite{L-N}. There are also results of sparse domination with other operators. Sparse form bounds of rough singular integral operators and Bochner-Riesz multipliers were shown by Conde-Alonso, Culic, Plinio and Ou \cite{C-C-P-O}, and Lacey, Mena and Reguera \cite{L-M-R} respectively. 

Beltran and Cladek \cite{B-C} discussed the sparse domination of pseudodifferential operators with symbols in $S^{m}_{\rho,\delta}$, and they established 
\begin{eqnarray*}
 |a(x,D)f(x)| \lesssim {\Lambda}_{ \mathcal{S} , r}f(x)  ,
\end{eqnarray*}
with $a \in S^{-n(1-\rho)}_{\rho ,\delta}$ and $1<r<\infty$ which implies the weighted boundedness result of \cite{M-R-S-2}, that is the $L^p(\omega)$-boundedness with $\omega \in A_p$. We establish a pointwise estimate of $a(x,D)$ with larger class $a \in S^{-n(1-\rho)/2}_{\rho. \rho}$ than $S^{-n(1-\rho)}_{\rho ,\rho}$ by introducing another type of sparse bounds:
\begin{thm}
\label{sparse bound}
Let $a \in S^{m}_{\rho, \rho} $ with $0< \rho <1$ and $m \in \mathbb{R} $. Then, for any $f \in L^{\infty}_{c}$, there exist the collection of finitely sparse families ${\{ {\mathscr{S}}_j \}}_{j=1}$ such that
\begin{eqnarray*}
|a(x,D)f(x)| \lesssim \sum_j \sum_{Q \in {\mathscr{S}}_j } {{\langle f \rangle}}_{2,Q} \sum_{R \subset Q , R \in {\mathscr{S}}_j } 1_R(x)
\end{eqnarray*}
if and only if
\begin{eqnarray*}
m  \le -n(1- \rho) /2.
\end{eqnarray*}
\end{thm}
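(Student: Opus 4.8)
The plan is to treat the two implications separately, with the positive direction (sufficiency) carrying the bulk of the work. For the sufficiency direction, assume $m = -n(1-\rho)/2$ (the general case $m \le -n(1-\rho)/2$ follows by monotonicity of the classes $S^{m}_{\rho,\rho}$ in $m$). First I would perform the standard Littlewood--Paley decomposition of the symbol: write $a(x,\xi) = \sum_{k \ge 0} a_k(x,\xi)$ where $a_k(x,\xi) = a(x,\xi)\psi_k(\xi)$ is supported in $|\xi| \sim 2^k$, and set $T_k = a_k(x,D)$. The kernel $K_k(x,z)$ of $T_k$ obeys, on the frequency scale $2^k$, the ``non-isotropic'' decay estimate $|K_k(x,z)| \lesssim 2^{kn}(1+2^k|z|)^{-N}$ away from the spatial scale $2^{k(\rho-1)}\cdot 2^{k} = 2^{k\rho}$; more precisely, because $a \in S^m_{\rho,\rho}$, integration by parts in $\xi$ is limited by the fact that each $\xi$-derivative only gains $2^{-k\rho}$ while each $x$-derivative costs $2^{k\rho}$, so the effective kernel is concentrated where $|z| \lesssim 2^{-k\rho}$ with the $L^2$-normalized bound $\|K_k(x,\cdot)\|_2 \lesssim 2^{km}2^{kn/2} = 2^{kn\rho/2}\cdot 2^{-kn(1-\rho)/2+kn/2}$ --- this is exactly the $\langle f\rangle_{2,Q}$ gain with $|Q| \sim 2^{-kn\rho}$. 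The key point is that each dyadic piece $T_k f(x)$ is controlled by $\langle f \rangle_{2,Q_k(x)}$ where $Q_k(x)$ is the cube of sidelength $\sim 2^{-k\rho}$ centered at $x$, via Cauchy--Schwarz applied to the kernel estimate together with the $L^2$ operator-norm bound on $T_k$ (which holds because $m = -n(1-\rho)/2$ is at or below the critical exponent for $L^2$, using the Calder\'on--Vaillancourt-type argument or the $C$--$T$ pointwise bound $|(T_k f)^\ast| \lesssim M_2 f$).

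Next I would organize the dyadic pieces into the nested sparse structure demanded by the statement. The idea is a stopping-time / Calder\'on--Zygmund construction adapted to the scales $2^{-k\rho}$: fix a dyadic grid, and for the function $f \in L^\infty_c$ run the standard Lerner-type iteration where, inside a ``top'' cube $Q$, one selects maximal subcubes $R \subset Q$ on which $\langle |f|\rangle_{2,R}$ exceeds a large multiple of $\langle |f|\rangle_{2,Q}$; the selected cubes have total measure at most a small fraction of $|Q|$, giving sparseness. The novelty here --- and the reason the nested double-sum $\sum_{Q}\langle f\rangle_{2,Q}\sum_{R\subset Q}1_R$ appears rather than the usual single sum --- is that the operator is not a Calder\'on--Zygmund operator: its ``kernel'' sees a whole range of scales simultaneously through the $T_k$'s, and controlling the contribution of $T_k f$ on a cube $Q$ at scale $2^{-k\rho}$ requires charging it to \emph{all} smaller cubes $R \subset Q$ in the sparse family, not just to $Q$ itself. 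I would split the sum over $k$ according to whether $2^{-k\rho}$ is larger or smaller than the sidelength of the ambient stopping cube, handle the ``high'' part (fine scales) by the local $L^2$ bound summed geometrically in $k$, and handle the ``low'' part (coarse scales, few of them) directly. Finitely many sparse families $\mathscr{S}_j$ arise from partitioning the shifted dyadic grids needed to capture all cubes.

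For the necessity direction, I would argue by testing against an explicit symbol. The natural candidate is a symbol $a \in S^{m}_{\rho,\rho}$ modeled on a Bochner--Riesz-type or ``Knapp example'' multiplier: take $a(x,\xi) = e^{i|\xi|^\rho}\chi(\xi/2^k)$-type building blocks (so that oscillation in $x$ at rate matching $\rho$ is present and the symbol genuinely lies in $S^m_{\rho,\rho}$ but no better), and test $a(x,D)$ on an $f$ that is a wave packet adapted to scale $2^{-k\rho}$. If $m > -n(1-\rho)/2$, then $\|a(x,D)f\|$ localized near the appropriate set grows like $2^{k(m+n(1-\rho)/2)}$ relative to any admissible sparse expression $\sum_j\sum_Q \langle f\rangle_{2,Q}\sum_{R\subset Q}1_R$ evaluated on the same $f$ --- one checks that the sparse quantity with $L^2$ averages is bounded (it controls, e.g., $M_2 f$ pointwise up to the nested structure, which is $L^2$-bounded), while the left side diverges as $k \to \infty$. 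Pinning down the sharp blow-up rate of the sparse form against this example, and verifying the chosen symbol really fails to lie in any $S^{m'}_{\rho,\rho}$ with $m' < -n(1-\rho)/2$, is the delicate bookkeeping of this half.

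The main obstacle I anticipate is the sufficiency direction's organization of scales: proving that the contribution of each $T_k$ at its characteristic scale $2^{-k\rho}$ can be redistributed onto the nested sparse family $\{R \subset Q\}$ with geometrically summable constants in $k$, uniformly over the stopping-time construction. Concretely, one needs a quantitative estimate of the form $\|T_k(f\,1_{3Q})\|_{L^2(Q)} \lesssim 2^{-\epsilon k} |Q|^{1/2}\langle f\rangle_{2,Q}$ for cubes $Q$ at scales coarser than $2^{-k\rho}$, plus a matching tail estimate for the interaction of $T_k$ with $f$ supported far from $Q$, and then one must verify that summing these over $k$ and over the sparse tree reproduces exactly the claimed double sum without losing the sharp constant. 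This is where the hypothesis $\rho < 1$ (so that the scales $2^{-k\rho}$ are genuinely coarser than $2^{-k}$ and the $\delta = \rho$ loss is exactly absorbed) and the critical choice $m = -n(1-\rho)/2$ must be used in a tight, non-wasteful way.
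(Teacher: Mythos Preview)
Your outline has the right ingredients at the level of heuristics (LP decomposition, the scale $2^{-k\rho}$, $L^2$ averages), but it misses the mechanism that actually makes the sufficiency direction close, and the specific route you sketch does not work as stated. The paper does \emph{not} sparse-dominate each $T_k$ and then sum in $k$. Instead it applies the Lerner--Nazarov oscillation decomposition to the \emph{full} function $a(x,D)f$, so the task becomes bounding $\omega_\lambda(a(x,D)f;Q)$ for each $Q$ in a single sparse family. For small $Q$ the paper introduces the dilated cube $Q^\rho$ (sidelength $\ell(Q)^\rho$) and splits $f=f1_{2Q^\rho}+f1_{(2Q^\rho)^c}$. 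The local piece is handled not by an $L^2\to L^2$ bound but by Fefferman's $L^2\to L^{2/\rho}$ smoothing estimate, which via Chebyshev controls the oscillation by $\langle f\rangle_{2,2Q^\rho}$. The far piece is where the LP sum enters, but now acting on kernel \emph{differences} $K_j(x,y)-K_j(x',y)$, which is what makes $\sum_j$ converge. Your anticipated estimate $\|T_k(f1_{3Q})\|_{L^2(Q)}\lesssim 2^{-\epsilon k}|Q|^{1/2}\langle f\rangle_{2,Q}$ does hold in $L^2$ (with $\epsilon=n(1-\rho)/2$), but that is not the pointwise control the sparse machinery needs inside each cube, and it is not clear how you would convert it into the nested form without the $Q\mapsto Q^\rho$ device and the $L^2\to L^{2/\rho}$ input, neither of which you invoke.

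The nested double sum also arises by a different mechanism than you describe. Lemma~2.1 leaves one with terms $\langle f\rangle_{2,2^{k+1}Q^\rho}1_Q(x)$, i.e.\ the average lives on a cube \emph{strictly larger} than $Q$. The paper passes through the three-lattice theorem to place $Q^\rho$ inside a dyadic $R_Q$, and then \emph{augments} the sparse family by a stopping-time collection governed by the predicate ``$\langle f\rangle_{2,R}\le\sqrt{2}\,\langle f\rangle_{2,U}$''; this forces $\sup_{R\in H(U)}\langle f\rangle_{2,R}\lesssim\langle f\rangle_{2,U}$ and produces the shape $\sum_U\langle f\rangle_{2,U}\sum_{R\subset U}1_R$. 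Your explanation that one must ``charge $T_kf$ to all smaller $R\subset Q$'' because the operator ``sees a whole range of scales'' is not the reason this structure appears. For the necessity direction your plan is essentially correct; in the paper it is obtained indirectly, via the fact that the sparse bound implies the $L^p(\omega)$ estimates for $\omega\in A_{p/2}$, whose sharpness is established by testing the symbol $e^{i|\xi|^{1-\rho}}|\xi|^m$ against wave packets (Proposition~3.4 and the remark following it).
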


Then as a corollary, we recover  the weighted boundedness result which was showed by Michalowski, Rule and Staubach \cite{M-R-S-1}, that is the $L^p(\omega)$-boundedness with $a \in S^{-n(1-\rho)/2}_{\rho,\rho}$ and $\omega \in A_{p/2}$. Furthermore, as a benefit of our new sparse bounds, we have the boundedness of pseudodifferential operators and also the time evolution $e^{it{(-\Delta)}^{\alpha/2}  }$ with $0<\alpha \le 2$ of dispersive equations on weighted Besov spaces (Theorem \ref{sparseform1}, Theorem \ref{sparseform2}, Corollary \ref{pse Besov}, Theorem \ref{sparseform3}, Corollary \ref{weight sol}). We have also the following Coifman-Fefferman estimate for $a(x,D)$ by the same argument used in the proof of Theorem \ref{sparse bound}.
\begin{thm}
\label{Coifman-Fefferman}
Let $a \in S^{-n(1- \rho)/2}_{\rho, \rho} $ with $0< \rho <1$. Then, for any $\omega \in A_{\infty}$ and $0 <p<\infty$, we have
\begin{eqnarray*}
{||a(x,D)f||}_{L^p(\omega)} \lesssim {[\omega]}_{A_{\infty} } {||M_2f||}_{L^p(\omega)}.
\end{eqnarray*}
\end{thm}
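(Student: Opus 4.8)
The plan is to reduce the Coifman--Fefferman estimate to the new sparse bound of Theorem~\ref{sparse bound} together with a standard weighted estimate for the sparse operators that appear there. Concretely, by Theorem~\ref{sparse bound} (applied to $a \in S^{-n(1-\rho)/2}_{\rho,\rho}$, which is exactly the borderline case $m = -n(1-\rho)/2$), for $f \in L^\infty_c$ there are finitely many sparse families $\mathscr{S}_j$ with
\begin{eqnarray*}
|a(x,D)f(x)| \lesssim \sum_j \sum_{Q \in \mathscr{S}_j} \lan f \ran_{2,Q} \sum_{R \subset Q,\, R \in \mathscr{S}_j} 1_R(x) =: \sum_j \mathcal{A}_{\mathscr{S}_j} f(x).
\end{eqnarray*}
Since the sum over $j$ is finite (with a bound on the number of families depending only on $n$ and $\rho$), it suffices to prove $\|\mathcal{A}_{\mathscr{S}} f\|_{L^p(\omega)} \lesssim [\omega]_{A_\infty}\|M_2 f\|_{L^p(\omega)}$ for a single sparse family $\mathscr{S}$, and then sum. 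By density of $L^\infty_c$ in the relevant spaces the estimate extends to general $f$.

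First I would record the pointwise inequality $\mathcal{A}_{\mathscr{S}} f(x) \le \sum_{Q \in \mathscr{S}} \lan f \ran_{2,Q} 1_Q(x) = \Lambda_{\mathscr{S},2} f(x)$, obtained simply by dropping the inner restriction $R \subset Q$ and using $\sum_{R \subset Q,\, R \in \mathscr{S}} 1_R \le \sum_{R \in \mathscr{S}} 1_R$ --- wait, that is false in general, so instead I use the genuinely correct bound $\sum_{R \subset Q,\, R \in \mathscr{S}} 1_R(x) \le \sum_{R \in \mathscr{S},\, R \ni x} 1 \cdot 1_Q(x)$, which is not bounded either. The clean route is to observe that for a fixed $x$, $\mathcal{A}_{\mathscr{S}}f(x) = \sum_{R \in \mathscr{S},\, R \ni x} \big( \sum_{Q \in \mathscr{S},\, Q \supset R} \lan f\ran_{2,Q}\big)$, and since the cubes $Q \in \mathscr{S}$ containing a fixed $R$ are nested with geometrically decaying $\lan f\ran_{2,Q} \lesssim M_2 f(\text{center})$ --- more precisely by sparseness $\sum_{Q \supset R} \lan f \ran_{2,Q} \lesssim$ a bounded multiple of $\sup_{Q \supset R}\lan f\ran_{2,Q} \le \inf_{y \in R} M_2 f(y)$; here the geometric decay is the key point and follows because a sparse family has the Carleson packing property, so the masses $|Q|$ of cubes in a chain grow at a fixed geometric rate. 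Thus $\mathcal{A}_{\mathscr{S}}f(x) \lesssim \sum_{R \in \mathscr{S},\, R \ni x} \inf_{R} M_2 f \le \Lambda_{\mathscr{S},1}(M_2 f)(x)$, reducing everything to a sparse operator applied to $M_2 f$.

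Next I would invoke the known weighted bound for the $1$-sparse operator: for any $\omega \in A_\infty$ and $0 < p < \infty$, one has $\|\Lambda_{\mathscr{S},1} g\|_{L^p(\omega)} \lesssim [\omega]_{A_\infty}\|g\|_{L^p(\omega)}$. This is the classical Coifman--Fefferman/Lerner-type estimate for sparse operators --- it is proved by the standard argument comparing $\Lambda_{\mathscr{S},1}g$ to the dyadic maximal function via the sparse stopping-time structure and using the $A_\infty$ condition in the form of the Fujii--Wilson characterization $[\omega]_{A_\infty} = \sup_Q \lan \omega \ran_{1,Q}^{-1} \fint_Q M(\omega 1_Q)$; the linear dependence on $[\omega]_{A_\infty}$ is exactly what this method gives. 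Applying it with $g = M_2 f$ yields $\|\mathcal{A}_{\mathscr{S}} f\|_{L^p(\omega)} \lesssim [\omega]_{A_\infty}\|M_2 f\|_{L^p(\omega)}$, and summing over the finitely many $j$ completes the proof.

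The main obstacle is the second step: establishing that the double-indexed sum $\mathcal{A}_{\mathscr{S}}f$ is pointwise dominated by $\Lambda_{\mathscr{S},1}(M_2 f)$ up to a constant, i.e. controlling the inner accumulation $\sum_{Q \supset R} \lan f \ran_{2,Q}$. One must use that $\mathscr{S}$ is $\eta$-sparse to get the geometric decay of $|Q|$ along an increasing chain of cubes (equivalently, the Carleson embedding/packing property of sparse families), which gives $\sum_{Q \in \mathscr{S},\, Q \supseteq R}\lan f\ran_{2,Q}\lesssim_{\eta} \inf_R M_2 f$; this is where the precise geometry of sparse families, rather than any property of $a(x,D)$, does the work. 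Once this telescoping-type bound is in hand, the remainder is a direct citation of the sparse-operator weighted theory and a finite summation.
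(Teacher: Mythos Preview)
Your reduction has a genuine gap at exactly the step you flag as the main obstacle. The claimed inequality
\[
\sum_{Q \in \mathscr{S},\, Q \supseteq R} \langle f \rangle_{2,Q} \;\lesssim_\eta\; \inf_R M_2 f
\]
is false: sparseness (equivalently the Carleson packing condition $\sum_{Q\subset P}|Q|\lesssim |P|$) controls cubes \emph{below} a given cube, not the number of ancestors of $R$ in $\mathscr{S}$, and it forces no decay whatsoever of the averages $\langle f\rangle_{2,Q}$ along an increasing chain. Concretely, on $\mathbb{R}$ take $\mathscr{S}=\{[0,2^{-k}]:0\le k\le N\}$ (which is $\tfrac12$-sparse with $E_k=[2^{-k-1},2^{-k}]$) and $f=1_{[0,1]}$. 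For $R=[0,2^{-N}]$ every average equals $1$, so the left side is $N+1$ while $\inf_R M_2 f=1$. In the same example $\mathcal{A}_{\mathscr{S}}f(x)\sim k^2$ for $x\in[2^{-k-1},2^{-k}]$ whereas $\Lambda_{\mathscr{S},1}(M_2f)(x)\sim k$, so the pointwise bound $\mathcal{A}_{\mathscr{S}}f\lesssim \Lambda_{\mathscr{S},1}(M_2f)$ fails. A secondary problem is that the estimate $\|\Lambda_{\mathscr{S},1}g\|_{L^p(\omega)}\lesssim[\omega]_{A_\infty}\|g\|_{L^p(\omega)}$ you invoke is not true for all $0<p<\infty$ (already $\omega\equiv1$, $p=1$ fails); the correct bound has $\|Mg\|_{L^p(\omega)}$ on the right, and then composing with your first step would produce $\|M(M_2f)\|_{L^p(\omega)}$ rather than $\|M_2f\|_{L^p(\omega)}$.

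The paper avoids this by \emph{not} passing through Theorem~\ref{sparse bound}. It uses directly the intermediate pointwise estimate of Lemma~\ref{pointwise},
\[
|a(x,D)f(x)|\;\lesssim\;\sum_{k\ge 0}2^{-\varepsilon k}\sum_{Q\in\mathscr{S}}\langle f\rangle_{2,X_k(Q)}1_Q(x),
\]
with $X_k(Q)=2^{k+1}Q$ or $2^{k+1}Q^\rho$; each inner sum is a single-layer sparse operator $\Lambda_{\mathscr{S},2,X_k}$, so no nested chain ever appears. The weighted bound $\|\Lambda_{\mathscr{S},r,X}f\|_{L^p(\omega)}\lesssim[\omega]_{A_\infty}\|M_rf\|_{L^p(\omega)}$ for such operators is then obtained by a good-$\lambda$ argument: on the set $\{\Lambda_{\mathscr{S},r,X}f>2^k,\ M_rf\le\gamma 2^k\}$ only cubes with small average contribute (because $Q\subset X(Q)$), and the sharp reverse H\"older inequality for $A_\infty$ weights, together with an $L^s$-bound on $\sum_{Q\in\mathscr{S}_m}1_Q$ coming from sparseness, controls the $\omega$-measure of that set with the stated linear dependence on $[\omega]_{A_\infty}$. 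Summing in $k$ via the factor $2^{-\varepsilon k}$ finishes the proof.
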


 \ \ 

This paper is organized as follows. In the next section, we prove Thoorem1.1 and Theorem 1.2 by using Lerner and Nazarov's method. The Section 3 is devoted to establish a sparse form bounds and the boundedness on weighted Besov spaces for $a(x,D)$ and $e^{it{(-\Delta)}^{\alpha/2}  }$, Furthemore, we give some results about the sharpness of weighted boundedness of these operators.

 \section{Sparse bounds for pseudodifferential operators}
 
\subsection{ The pointwise estimate for pseudodifferential operators}

To establish Theorem 1.1, we use the following definition of dyadic lattice and  sparse decomposition of measurable functions given by Lerner and Nazarov \cite{L-N}. 
\begin{dfn}
A Dyadic lattice $\D$ in $\Rn$ is any collection of cubes such that
 
 (D-1) if $Q \in \D$, then each child of $Q$ is in $\D$,
 
 (D-2) every two cubes in $\D$ have a common ancestor in $\D$,
 
 (D-3) $\D$ is regular, i.e., for any compact set $K$ in $\Rn$, there exists $Q \in \D$ such that $K \subset Q$.

\end{dfn}

\begin{thm}[\cite{L-N}]
\label{L-N}
Let $f: \Rn \to \R$ be any measurable almost everywhere finite function such that for every $\varepsilon >0$,
\begin{eqnarray*}
\lim_{R \to \infty} {R}^{-n}|\{ x \in { [-R,R]}^n \ ; \ |f(x)| >\varepsilon \} =0 .
\end{eqnarray*}
Then, for any dyadic lattice $\D$ and any $\lambda \in (0, 2^{-n-2} ]$, there exists the sparse family $\Sp \subset \D $ such that
\begin{eqnarray*}
|f(x)|\le \sum_{Q \in \Sp} {\omega}_{\lambda} (f;Q)1_Q(x),
\end{eqnarray*}
where
\begin{eqnarray*}
{\omega}_{\lambda}(f;Q)= \inf_{\substack{E\subset Q \\ |E|>(1-\lambda )|Q|}} \sup_{x,x' \in E} |f(x)-f(x')|.
\end{eqnarray*}
\end{thm}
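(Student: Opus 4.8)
\medskip
\noindent\emph{Proof sketch.} The plan is to run a Calder\'{o}n--Zygmund stopping-time scheme on medians, in the spirit of Lerner's local mean oscillation decomposition, first inside a fixed cube and then globalize to $\Rn$ using the decay hypothesis. Throughout, $m_f(Q)$ denotes a median of $f$ on $Q$, i.e.\ a real number with $|\{x\in Q:f(x)>m_f(Q)\}|\le|Q|/2$ and $|\{x\in Q:f(x)<m_f(Q)\}|\le|Q|/2$. I would first record two elementary facts, each obtained by fixing a set $E\subset Q$ with $|E|>(1-\lambda)|Q|$ realizing $\omega_\lambda(f;Q)$ up to $\varepsilon$ and using that $|E|>(1-\lambda)|Q|>\tfrac12|Q|$ forces $m_f(Q)\in[\inf_E f,\sup_E f]$: (i) a Chebyshev-type bound $|\{x\in Q:|f(x)-m_f(Q)|>\omega_\lambda(f;Q)\}|\le\lambda|Q|$; and (ii) a parent--child stability bound $|m_f(Q')-m_f(Q)|\le\omega_\lambda(f;Q)$ whenever $Q'$ is a dyadic child of $Q$, where one additionally uses $|E\cap Q'|\ge(1-2^n\lambda)|Q'|\ge\tfrac34|Q'|>\tfrac12|Q'|$ --- this last inequality is exactly where $\lambda\le2^{-n-2}$ is needed. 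I would also invoke the standard fact that $m_f(Q')\to f(x)$ for a.e.\ $x$ as $Q'\downarrow x$ through dyadic cubes (apply Lebesgue differentiation to the truncations $\min(|f|,N)$ and use that $f$ is a.e.\ finite).

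Next, for a fixed $Q_0\in\D$: declare $Q_0$ selected, and for each selected cube $Q$ let its stopping children be the maximal dyadic cubes $Q'\subsetneq Q$ with $|m_f(Q')-m_f(Q)|>\omega_\lambda(f;Q)$; then iterate. Since, by the definition of $m_f(Q')$, each stopping child $Q'$ satisfies $|\{x\in Q':|f(x)-m_f(Q)|>\omega_\lambda(f;Q)\}|\ge\tfrac12|Q'|$, and these sets are pairwise disjoint and contained in a set of measure $\le\lambda|Q|$ by (i), we get $\sum_{Q'}|Q'|\le2\lambda|Q|\le\tfrac12|Q|$; hence the selected cubes form a $\tfrac12$-sparse tree $\mathcal S(Q_0)$, and the unions $\Omega_k$ of the successive generations satisfy $|\Omega_{k+1}|\le\tfrac12|\Omega_k|$, so $|\bigcap_k\Omega_k|=0$. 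Consequently for a.e.\ $x\in Q_0$ the selected cubes containing $x$ form a finite chain $Q_0=P_0\supsetneq P_1\supsetneq\cdots\supsetneq P_L\ni x$ with $x$ in no stopping child of $P_L$; then every dyadic $Q'\ni x$ inside $P_L$ fails the stopping condition, so letting $Q'\downarrow x$ and using the Lebesgue-differentiation fact gives $|f(x)-m_f(P_L)|\le\omega_\lambda(f;P_L)$. It remains to control $\sum_{j=0}^{L-1}|m_f(P_{j+1})-m_f(P_j)|$.

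This last step is the one I expect to be the main obstacle. By (ii) a direct dyadic child never triggers the stopping rule, so each stopping child $P_{j+1}$ lies at least two generations below $P_j$; writing $\widehat P_{j+1}$ for the dyadic parent of $P_{j+1}$, the cube $\widehat P_{j+1}$ fails the stopping condition relative to $P_j$, giving $|m_f(\widehat P_{j+1})-m_f(P_j)|\le\omega_\lambda(f;P_j)$, while (ii) gives $|m_f(P_{j+1})-m_f(\widehat P_{j+1})|\le\omega_\lambda(f;\widehat P_{j+1})$, hence $|m_f(P_{j+1})-m_f(P_j)|\le\omega_\lambda(f;P_j)+\omega_\lambda(f;\widehat P_{j+1})$. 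The term $\omega_\lambda(f;\widehat P_{j+1})$ cannot be reabsorbed into $\omega_\lambda(f;P_j)$ --- this is precisely the obstruction, and it shows one cannot simply telescope over the selected cubes alone --- so I would replace $\mathcal S(Q_0)$ by the larger family $\mathcal S'(Q_0):=\mathcal S(Q_0)\cup\{\widehat P:P\in\mathcal S(Q_0),\ P\ne Q_0\}$. Summing the above estimates over $j$ and adding $|f(x)-m_f(P_L)|\le\omega_\lambda(f;P_L)$ then yields, with constant $1$,
\[
|f(x)-m_f(Q_0)|\le\sum_{Q\in\mathcal S'(Q_0)}\omega_\lambda(f;Q)\,1_Q(x)\qquad\text{for a.e.\ }x\in Q_0,
\]
while $\mathcal S'(Q_0)$ is still sparse: it satisfies a Carleson packing condition $\sum_{Q\in\mathcal S'(Q_0),\,Q\subseteq R}|Q|\lesssim_n|R|$ for every cube $R$ (the $\mathcal S(Q_0)$-part is packed since $\mathcal S(Q_0)$ is $\tfrac12$-sparse, and adjoining one generation of dyadic parents multiplies the packing sum by at most $2^n$), and the equivalence of the Carleson condition with sparseness supplies the pairwise disjoint major subsets.

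Finally I would globalize. Using regularity of $\D$, fix an increasing sequence $Q_0^{(1)}\subseteq Q_0^{(2)}\subseteq\cdots$ in $\D$ with $\bigcup_N Q_0^{(N)}=\Rn$ and $\ell(Q_0^{(N)})\to\infty$; the decay hypothesis forces $m_f(Q_0^{(N)})\to0$, since for each $\varepsilon>0$ the density of $\{|f|>\varepsilon\}$ in $Q_0^{(N)}$ tends to $0$ and hence is eventually $<\tfrac12$. Passing to a subsequence along which, for each of the countably many cubes of $\D$, membership in $\mathcal S'(Q_0^{(N)})$ stabilizes, let $\Sp$ be the resulting limiting family; it is sparse, and letting $N\to\infty$ in the displayed inequality gives $|f(x)|\le\sum_{Q\in\Sp}\omega_\lambda(f;Q)\,1_Q(x)$ for a.e.\ $x$. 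Since $\omega_\lambda(f;Q)$ is nonincreasing in $\lambda$, the same $\Sp$ works for every $\lambda\in(0,2^{-n-2}]$, which is the assertion.
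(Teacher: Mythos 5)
The paper offers no proof of this statement---it is quoted verbatim from Lerner and Nazarov \cite{L-N}---so your proposal has to be judged on its own terms. Your local argument is correct and is essentially the argument in \cite{L-N}: the two median facts (i)--(ii) (with $\lambda\le 2^{-n-2}$ entering exactly where you say), the stopping family with packing constant $2\lambda\le \tfrac12$, the observation that stopping cubes lie at least two generations down so that inserting the dyadic parents $\widehat P_{j+1}$ repairs the telescoping, the augmented family $\mathcal S'(Q_0)$ being Carleson and hence sparse, and the resulting bound $|f(x)-m_f(Q_0)|\le\sum_{Q\in\mathcal S'(Q_0)}\omega_\lambda(f;Q)1_Q(x)$ a.e.\ on $Q_0$ with constant $1$ are all sound.

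The globalization, however, has a genuine gap in the very last step. For a.e.\ $x$ and all large $N$ you have $|f(x)|\le|m_f(Q_0^{(N)})|+S_N(x)$ with $S_N(x)=\sum_{Q\in\mathcal S'(Q_0^{(N)})}\omega_\lambda(f;Q)1_Q(x)$ and $m_f(Q_0^{(N)})\to0$; but ``letting $N\to\infty$ in the displayed inequality'' is not a legitimate operation for the limit family $\Sp$ obtained by per-cube stabilization. Stabilization of membership of each fixed cube only yields the Fatou-type inequality in the useless direction, $\liminf_N S_N(x)\ \ge\ \sum_{Q\in\Sp,\,Q\ni x}\omega_\lambda(f;Q)$, whereas you need $\le$: since the whole stopping construction is rerun from a new top cube at every stage, the cubes that carry the bound near a fixed $x$ may keep changing with $N$ and none of them need survive into $\Sp$ (schematically, if the stage-$i$ set of scales is $A_i=\{i\}$ and all oscillations equal $1$, every scale eventually drops out while the stage sums remain $1$). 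So the final display is unproved as written. A correct globalization can stay inside your framework but must be organized so that no limit of families is taken: let $A_1\subset A_2\subset\cdots$ be the chain of dyadic ancestors of a fixed cube (its union is $\Rn$ by (D-2)--(D-3)); partition $\Rn$, up to a null set, into $A_1$ and the $2^n-1$ siblings of $A_k$ inside $A_{k+1}$ for $k\ge1$; apply your local estimate on each such piece $B$; and bound the constant term by telescoping along the ancestor chain, $|m_f(B)|\le\omega_\lambda(f;A_{k+1})+\sum_{j>k+1}\omega_\lambda(f;A_j)$, using your fact (ii) at each parent step together with $m_f(A_j)\to0$ (which follows from the density hypothesis as you argued). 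The union of the local families over the pairwise disjoint pieces, together with the chain $\{A_j\}_{j\ge1}$ (which is Carleson for trivial geometric reasons), is still a Carleson, hence sparse, family, and one obtains $|f(x)|\le\sum_{Q\in\Sp}\omega_\lambda(f;Q)1_Q(x)$ a.e.\ with constant $1$. With this replacement of your limiting step the proof is complete.
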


By using Theorem \ref{L-N}, we have a pointwise estimate of $a(x,D)$ with $a \in S^{-n(1 -\rho)/2}_{\rho, \rho}$:

\begin{lem}
\label{pointwise}
Let $a \in S^{-n(1-\rho)/2}_{\rho, \rho} $ with $0< \rho <1$. Then, for any $f \in L^{\infty}_{c}$, there exist the sparse family $\Sp$ so that
\begin{eqnarray*}
|a(x,D)f(x)| \lesssim \sum_{k\ge 0} 2^{-\varepsilon k}  \sum_{Q \in \Sp , |Q| \ge 3^{-\frac{2n}{1-\rho}}   } { \langle f \rangle}_{2,2^{k+1} Q} 1_Q (x) +  \sum_{k \ge 0} 2^{- \varepsilon k}  \sum_{Q \in \Sp , |Q|< 3^{-\frac{2n}{1-\rho}}    } { \langle f \rangle}_{2,2^{{k+1}}Q^{\rho}} 1_Q (x),
\end{eqnarray*}
where $\varepsilon = \lfloor n/2 \rfloor- n/2 +1$.
\end{lem}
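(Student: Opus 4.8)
The plan is to apply the decomposition of Lerner and Nazarov (Theorem \ref{L-N}) to $g:=a(x,D)f$ and then to dominate the local oscillations $\omega_\lambda(g;Q)$ it produces by the asserted right-hand side. The hypothesis of Theorem \ref{L-N} is immediate: since $m\le0$ and $f\in L^\infty_c$ one has $a(x,D)f\in L^\infty$, and a standard integration-by-parts estimate for the Schwartz kernel of $a(x,D)$ away from $\mathrm{supp}\,f$ gives rapid decay of $a(x,D)f$ at infinity. Fixing a dyadic lattice and $\lambda=2^{-n-2}$, Theorem \ref{L-N} produces a sparse $\Sp$ with $|a(x,D)f(x)|\le\sum_{Q\in\Sp}\omega_\lambda(a(x,D)f;Q)1_Q(x)$, so it remains to show, for each $Q\in\Sp$ of side length $\ell$,
\[
\omega_\lambda(a(x,D)f;Q)\ \lesssim\ \sum_{k\ge0}2^{-\varepsilon k}\,{\langle f\rangle}_{2,2^{k+1}\widetilde Q},
\]
where $\widetilde Q=Q$ when $|Q|\ge3^{-2n/(1-\rho)}$ and $\widetilde Q$ is the cube concentric with $Q$ of side $\ell^\rho$ otherwise. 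The dichotomy records that, for a symbol of type $\rho$ with $\delta=\rho$, the $x$-scale on which the symbol varies at frequency $\sim\ell^{-1}$ is $\ell^\rho$, so below the uncertainty scale $1$ the natural ``dual box'' at scale $\ell$ is $Q^\rho$ rather than $Q$; the precise threshold is just where $Q^\rho$ stops being comparable to $Q$.

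The two basic devices are a Littlewood--Paley decomposition of the symbol and a frequency cut adapted to $Q$. Writing $a=\sum_{j\ge0}a_j$ with $a_j$ supported on $\{|\xi|\sim2^j\}$, integration by parts in $\xi$ gives $|\partial_x^\gamma K_j(x,y)|\lesssim2^{j(n+m+\rho|\gamma|)}(1+2^{j\rho}|x-y|)^{-M}$ for every $M$ (each $\partial_x$ costing $2^{j\rho}$, the $\delta=\rho$ loss), and a semiclassical rescaling reduces $2^{-jm}a_j$ to a uniform symbol, so the Calder\'on--Vaillancourt theorem yields $\|a_j(x,D)\|_{L^2\to L^2}\lesssim2^{jm}$; hence $\|a(x,D)\|_{L^2\to L^2}\lesssim1$ and $\big\|\sum_{2^j\gtrsim\ell^{-1}}a_j(x,D)\big\|_{L^2\to L^2}\lesssim\ell^{n(1-\rho)/2}=|Q|^{1/2}/|Q^\rho|^{1/2}$. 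For the fixed cube $Q$ I would split $a(x,D)=L_Q+H_Q$ at frequency $\sim\ell^{-1}$, $L_Q=\sum_{2^j\lesssim\ell^{-1}}a_j(x,D)$, and split $f=\sum_{k\ge0}f_k$ into the dyadic annuli $f_0=f1_{\widetilde Q}$, $f_k=f1_{2^k\widetilde Q\setminus2^{k-1}\widetilde Q}$, bounding $\omega_\lambda$ of $a(x,D)f$ by the sum of the $\omega_\lambda$'s of these pieces, which costs only a harmless shrinking of $\lambda$.

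For $L_Q$ I would use that the oscillation is controlled by a single top-order derivative, $\omega_\lambda(h;Q)\lesssim\ell^{s}|Q|^{-1/2}\|\nabla^{s}h\|_{L^2(Q)}$ with $s=\lfloor n/2\rfloor+1$: bound $|h(x)-h(x')|\le2|h(x)-P(x)|$ for the averaged Taylor polynomial $P$ of degree $s-1$, apply Poincar\'e, and discard a set of measure $\le\lambda|Q|$ by Chebyshev. Since $\nabla^{s}L_Q$ is built from the symbols $\xi^{s}a_j$, $2^j\lesssim\ell^{-1}$, of size $\lesssim2^{j(m+s)}$ with $m+s=\varepsilon+\rho n/2>0$, the $j$-sum is governed by its top term $\sim\ell^{-(m+s)}$; combined with the factors $\ell^{s}$, $|Q|^{-1/2}$, the $L^2$-mass $|2^{k+1}\widetilde Q|^{1/2}{\langle f\rangle}_{2,2^{k+1}\widetilde Q}$ of $f_k$ and, for $k\ge1$, the kernel tails, the powers of $\ell$ cancel exactly at the critical order $m=-n(1-\rho)/2$: the $k=0$ term emerges as ${\langle f\rangle}_{2,\widetilde Q}$ and the $k\ge1$ terms carry geometric decay. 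For $H_Q$, differentiation would amplify the high frequencies, so for $k=0$ I would instead estimate $\omega_\lambda(H_Q(f1_{\widetilde Q});Q)\le2(\lambda|Q|)^{-1/2}\|H_Q(f1_{\widetilde Q})\|_{L^2}\lesssim(\lambda|Q|)^{-1/2}\ell^{n(1-\rho)/2}|\widetilde Q|^{1/2}{\langle f\rangle}_{2,\widetilde Q}\sim{\langle f\rangle}_{2,\widetilde Q}$ (again the exponents balance), while for $k\ge1$ I would return to the Taylor--Poincar\'e oscillation estimate, now exploiting that $H_Q$ has kernel concentrated at scale $\lesssim\ell^\rho$: splitting the high frequencies into a moderate band $\ell^{-1}\lesssim2^j\lesssim\ell^{-1/\rho}$, where the oscillation gains $\ell^{s}2^{j\rho s}\le1$, and a very high band $2^j\gtrsim\ell^{-1/\rho}$, handled by taking $M$ large, each gives geometric decay in $k$. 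Summing the $L_Q$ and $H_Q$ contributions gives the stated bound, with slowest decay $2^{-\varepsilon k}$, $\varepsilon=\lfloor n/2\rfloor-n/2+1=s-n/2$.

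The main obstacle is the following: at the critical order $m=-n(1-\rho)/2$ the operator $a(x,D)$ is not a Calder\'on--Zygmund operator (its kernel is not even locally integrable) and, since $\delta=\rho$, one cannot freeze the $x$-dependence on the uncertainty boxes, so no argument resting on pointwise kernel size alone can work. The delicate step is the package of $\rho$-anisotropic, cube-localized $L^2$ estimates for $a(x,D)$ and its high-frequency tails --- in effect a quantitative local form of the Chanillo--Torchinsky bound $(a(x,D)f)^{*}\lesssim M_2f$ from \cite{C-T} --- and in particular making the $k=0$ terms come out exactly as the $L^2$-averages ${\langle f\rangle}_{2,\widetilde Q}$, with no spurious power of $\ell$; this forces one to use the full $L^2$-theory of $S^0_{\rho,\rho}$ and to track the interplay between the frequency cut at $2^j\sim\ell^{-1}$ and the spatial scale $\ell^\rho$.
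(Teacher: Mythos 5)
Your overall skeleton --- the Lerner--Nazarov decomposition, the $Q$ versus $Q^{\rho}$ dichotomy with the threshold $|Q|\sim 3^{-2n/(1-\rho)}$, a dyadic frequency decomposition of the symbol, kernel integration by parts for the far annuli, and the value $\varepsilon=\lfloor n/2\rfloor-n/2+1$ --- matches the paper. The genuine gap is in your treatment of the local low-frequency piece $L_Q(f1_{\widetilde Q})$. The oscillation estimate you base it on, $\omega_\lambda(h;Q)\lesssim \ell^{s}|Q|^{-1/2}\|\nabla^{s}h\|_{L^2(Q)}$ with $s=\lfloor n/2\rfloor+1$, is false for $s\ge 2$: a nonconstant affine $h$ has $\nabla^{s}h=0$ but $\omega_\lambda(h;Q)>0$, and the step $|h(x)-h(x')|\le 2|h(x)-P(x)|$ cannot hold for a nonconstant Taylor polynomial $P$, whose own oscillation on $Q$ is uncontrolled. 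The corrected versions (Poincar\'e plus Chebyshev, which uses one derivative, or the full Sobolev bound involving all derivatives up to order $s$) do not rescue the argument at the critical order: $\|\partial^{\gamma}L_Q\|_{L^2\to L^2}\lesssim\sum_{2^{j}\lesssim\ell^{-1}}2^{j(|\gamma|-n(1-\rho)/2)}$, and when $|\gamma|\le n(1-\rho)/2$ this sum is $O(1)$ (or $O(\log \ell^{-1})$), not $\sim\ell^{n(1-\rho)/2-|\gamma|}$; the resulting bound for the oscillation of $L_Q(f1_{Q^{\rho}})$ on $Q$ is then $\lesssim\ell^{|\gamma|-n(1-\rho)/2}\langle f\rangle_{2,Q^{\rho}}$, which blows up as $\ell\to 0$ whenever $n(1-\rho)/2>|\gamma|$. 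Your claim that ``the $j$-sum is governed by its top term'' uses only $m+s>0$ and hides exactly this failure; so for $n(1-\rho)\ge 2$ the local low-frequency estimate is not proved, and no argument based solely on symbol-size/Calder\'on--Vaillancourt bounds plus derivative gains can close it.

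This is precisely where the paper inserts a different input: it treats the whole local piece $a(x,D)(f1_{2Q^{\rho}})$, all frequencies at once, by a Chebyshev argument against the critical $L^2\to L^{2/\rho}$ bound for symbols in $S^{-n(1-\rho)/2}_{\rho,\rho}$; the exponent $2/\rho$ is tuned so that $|2Q^{\rho}|^{1/\rho}\sim|Q|$, hence the level $\alpha\sim\langle f\rangle_{2,2Q^{\rho}}$ carries no stray power of $\ell$. That mapping property is a genuine critical (Fefferman-type) theorem and is not recovered by summing the dyadic pieces $a_j(x,D)$, which loses at least a logarithm in $\ell$. By contrast, your Chebyshev treatment of the high-frequency local part via $\|H_Q\|_{L^2\to L^2}\lesssim\ell^{n(1-\rho)/2}$ is sound (and mirrors the paper's large-cube case), and your handling of the far annuli is in substance the paper's: there one should use pointwise kernel bounds and, for the low frequencies, a first-order Taylor/difference estimate in $x$ of the kernels, rather than the derivative-oscillation lemma. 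To repair the proof you must import the $L^2\to L^{2/\rho}$ (or an equivalent critical) bound for the local term, as the paper does, or supply a substitute for it; as written, the lemma is not established.
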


To prove the lemma, we give a partition of unity. Take $\hat{\psi} \in C^{\infty}_{0}({\mathbb{R}}^n)$ such that supp $ \hat{\psi} $ $ \subset B(0,2)  $, $ \hat{\psi} =1$ on $B(0,1)$ and $ \psi \ge 0$, and denote $   { \hat{\psi} }_{j}(\xi) := {\hat{\psi}} (2^{-j} \xi) - \hat{\psi}(2^{-j+1} \xi)$ for $j \in \Z$, 
\[
\phi_j=\begin{cases}
\psi_j & j \in \N \\
{ {\sum_{ i \le 0 } \psi_i }}&  j=0
\end{cases}.
\]
Then, $a(x,D)$ is decomposed as
\begin{eqnarray*}
a(x,D)=\sum^{\infty}_{j=0} a_j(x,D),
\end{eqnarray*}
where $a(x,\xi)= a(x,\xi) {\hphi}_j (\xi)$. Furthermore, we use these notations in the following sections. Let us prove Lemma \ref{pointwise}.
\begin{proof}
From Theorem 2.1, we have
\begin{eqnarray*}
|a(x,D)f(x)| \le \sum_{Q \in \Sp } {\omega}_{\lambda}(|a(x,D)f(x)| ;Q)1_Q(x).
\end{eqnarray*}
First, we consider the case $|Q|<3^{-\frac{2n}{1-\rho}} $. Let $\alpha >0$ and
\begin{eqnarray*}
E=\{ x \in Q \ ; \ |a(x,D)(f1_{2Q^{\rho} })| \le \alpha \}.
\end{eqnarray*}
Then, $L^2 \to L^{2/\rho}$ boundness of $a(x,D)$ yields 
\begin{eqnarray*}
|E^c|^{\rho /2} &\le& {\alpha}^{-1} {||a(x,D)(f1_{2 Q^{\rho}}) ||}_{L^{2/\rho}} \\
&\le&  {\alpha}^{-1} {||a(x,D) ||}_{L^2 \to L^{2/\rho}} {||f||}_{L^2(2 Q^{\rho} )}.
\end{eqnarray*}
By taking $\alpha = 2^{n}{\lambda}^{-\rho/2} {||a(x,D) ||}_{L^2 \to L^{2/\rho}} {\langle f \rangle}_{2, 2 Q^{\rho} }$, one has $|E^c|\le \lambda|Q|$ and $|E| \ge (1-\lambda)|Q| $. Therefore, we have
\begin{eqnarray*}
|a(x,D)f(x) -a(x,D)f(x')| \lesssim   {\langle f \rangle}_{2, 2 Q^{\rho} } + | a(x,D)(f1_{ {(2 Q^{\rho}) }^c })(x) - a(x,D)(f1_{ {(2 Q^{\rho}) }^c })(x')|
\end{eqnarray*}
for any $x,x' \in E$. We estimate the second term. Let $a_j (x, \xi) := a(x, \xi) {\hphi}_j  (\xi)$ and
\begin{eqnarray*}
K_j(x,y) =\int e^{i(x-y) \xi} a_j(x, \xi) d\xi .
\end{eqnarray*}
We integrate by parts in $\xi$ to obtain
\begin{eqnarray*}
|K(x,y)| \lesssim {|x-y|}^{-N}  \sum_{|\alpha| =N} \left| \int e^{i(x-y)\xi} {\partial}^{\alpha}_{\xi} a_j(x,\xi) d\xi \right|
\end{eqnarray*}
for any $n \in \N$. Hence, we have
\begin{eqnarray*}
|a(x,D)(f1_{{(2 Q^{\rho} )}^c })(x)| &\le&  \sum_{|\alpha| =N}  \int {|x-y|}^{-N} |f(y)|1_{{(2 Q^{\rho} )}^c }(y) \left| \int  e^{i(x-y)\xi} {\partial}^{\alpha}_{\xi} a_j(x,\xi) d\xi \right| dy \\
&=& \sum_{|\alpha| =N}   \sup_{{||g||}_{L^{\infty}}=1 } \left| \int {|x-y|}^{-N} f(y)1_{{(2 Q^{\rho} )}^c }(y) g(y) \int  e^{i(x-y)\xi} {\partial}^{\alpha}_{\xi} a_j(x,\xi) d\xi dy \right| \\
& \le & \sum_{|\alpha| =N}   \sup_{{||g||}_{L^{\infty}}=1 }  {\left( \int {| {\partial}^{\alpha}_{\xi} a_j(x,\xi)|}^2 \right)}^{1/2} {||\mathcal{F}[ {|x- \cdot|}^{-N} f1_{{(2 Q^{\rho} )}^c }g||}_{L^2}\\
&\lesssim&2^{j \rho n/2 - j \rho N} {\left( \int_{ {(2 Q^{\rho} )}^c} {|x-y|}^{-2N}   {|f(y)|}^2 dy \right)}^{1/2} \\
&\lesssim&2^{j\rho n/2 - j \rho N} \sum_{k \ge 1}  {\left( \int_{2^{k+1} Q^{\rho}  \setminus 2^k  Q^{\rho} } {|x-y|}^{-2N}   {|f(y)|}^2 dy \right)}^{1/2} \\
&\lesssim&2^{j\rho n/2 - j \rho N} {\ell(Q)}^{-\rho N +\rho n/2 } \sum_{k \ge 1} 2^{-kN +kn/2} {\langle f \rangle}_{2,2^{k+1} Q^{\rho} } .
\end{eqnarray*}
By taking $N>n/2$, one has
\begin{eqnarray*}
\sum_{2^{-j} \le {\ell(Q)} } |a(x,D)(f1_{{(2 Q^{\rho} )}^c })(x)|  \lesssim \sum_{k \ge 1} 2^{-kN +kn/2} {\langle f \rangle}_{2,2^{k+1} Q^{\rho} } . 
\end{eqnarray*}
On the other hands, it holds that
\begin{eqnarray*}
&&{(x-y)}^{\alpha} \{  K_j(x,y) - K_j(x',y) \}  \\
&=&(x-y)^{\alpha} \int e^{i(x-y)\xi}(1- e^{-i(x-x')\xi})a_j(x,\xi)d \xi + (x-y)^{\alpha} \int e^{i(x'-y)\xi}(a_j(x,\xi) -a_j(x', \xi) )  d \xi \\
&=& \int e^{i(x-y)\xi}{\partial}^{\alpha}_{\xi} \{ (1- e^{-i(x-x')\xi})a_j(x,\xi)\} d \xi +  \int e^{i(x'-y)\xi} {\partial}^{\alpha}_{\xi} (a_j(x,\xi) -a_j(x', \xi) )  d \xi .
\end{eqnarray*}
For any $j$ such that $2^{-j} >\ell(Q)$, Taylor's formula yields
\begin{eqnarray*}
|{\partial}^{\alpha}_{\xi} \{ (1- e^{-i(x-x')\xi})a_j(x,\xi)\}| \lesssim  \ell(Q) 2^{-jn(1-\rho)/2+j -j \rho |\alpha| },
\end{eqnarray*}
and
\begin{eqnarray*}
|{\partial}^{\alpha}_{\xi} (a_j(x,\xi) -a_j(x', \xi) )| &=& \left| {\partial}^{\alpha}_{\xi}\int^{1}_{0} (x-x') \cdot (\nabla_{x} a_j)(x'+t(x-x'), \xi) dt \right| \\
&\lesssim& \ell(Q) 2^{-j n (1-\rho)/2-j\rho|\alpha| + j\rho}.
\end{eqnarray*}
From these results, we obtain
\begin{eqnarray*}
&&\sum_{2^{-j} > \ell(Q)} | a_j(x,D)(f1_{ {(2 Q^{\rho}) }^c })(x) - a_j(x,D)(f1_{ {(2 Q^{\rho}) }^c })(x')|\\
&\lesssim& \sum_{2^{-j} > \ell(Q)} 2^{j\rho n/2 +j - j \rho N} {\ell(Q)}^{1-\rho N +\rho n/2 } \sum_{k \ge 1} 2^{-kN +kn/2} {\langle f \rangle}_{2,2^{k+1} Q^{\rho} } \\
&\lesssim&  \sum_{k \ge 1} 2^{-kN +kn/2} {\langle f \rangle}_{2,2^{k+1} Q^{\rho} }
\end{eqnarray*}
by taking $N=\lfloor n/2 \rfloor +1$. In the case $|Q| \ge3^{-\frac{2n}{1-\rho}}    $, the desired estimate is easily checked in the same way as above by setting
\begin{eqnarray*}
E=\{ x \in Q \ ; \ |a(x,D)(f1_{2Q })| \le \alpha \} \ , \ \alpha = 2^{n}{\lambda}^{-1/2} {||a(x,D) ||}_{L^2 \to L^{2}} {\langle f \rangle}_{2, 2 Q }.
\end{eqnarray*}
\end{proof}

\subsection{ Proof of Theorem 1.1}

In this subsection, we prove Theorem \ref{sparse bound} by using Lemma 2.1 and Lerner and Nazarov's technique \cite{L-N}. 

\begin{dfn}
Let $\mathcal{P}$ denotes a map from $\{ (Q,Q') \in \mathscr{D} \times \mathscr{D} \ ; \ Q' \subset Q \}$ to $\{ true,false\}$ such that $\mathcal{P}(Q,Q)=true$ for any $Q\in \mathcal{D}$. Then, we call that $(Q,Q')$ is one step if ${\mathcal{P}}(Q,Q')=false$ and $\mathcal{P}(Q,R)=true$ for any $Q \subsetneq R \subset Q$, and we call that $(Q,Q')$ is finite step if there exist
 $m \in \mathbb{N}$ and sequence $Q'=Q_0 \subset Q_1 \subset \cdots \subset Q_m =Q$ such that each $(Q_{j+1},Q_j)$ is one step.  Furthermore, we set
 \begin{eqnarray*}
 stop(Q, \mathcal{P} )=\{ Q' \in \mathscr{D} \ ; \ (Q,Q') \ is \ finite \ step \}.
 \end{eqnarray*}
\end{dfn}

Let $\Sp \subset \D $ denotes a sparse family and that with every cube $Q \in \Sp$ some family $\mathscr{F}(Q) \subset \D$ of child of $Q$ is associated so that $Q \in \mathscr{F} (Q)$.  Then, we define the family of cubes $\tilde{\Sp}$ by 
\begin{eqnarray*}
\tilde{\Sp} := \bigcup_{Q \in \Sp} \tilde{\mathscr{F}} (Q) , \ \  \ \ \ \ \ \ \ \ \ \ \ \ \ \ \ \ \ \ \ \ \ \ \ \ \  \ \\
\tilde{ \mathscr{F}}(Q) := \{ P \in \mathscr{F} (Q) \ ; \ P \notin \mathscr{F}(R) \ for \ any \ Q \subsetneq R \},
\end{eqnarray*}
and we call the argumentation of $\Sp$ by $\mathscr{F}(Q)$. In \cite{L-N}, Lerner and Nazalov proved $\tilde{\Sp}$ be a sparse family if $\mathscr{F}(Q)$ are sparse families. In particular, they proved the following result in the same paper.

\begin{pro}
Let $\Sp$ be a sparse family  and assume that 
\begin{eqnarray*}
\sum_{j} |Q_j| <\frac{1}{2} |Q|
\end{eqnarray*}
for any $Q \in \Sp $ and finitely pairwise disjoint cubes ${\{ Q_j \}}_j$ included $Q$ such that $ \mathcal{P} (Q,Q_j) =false$. Then, the augmentation of $\Sp$ by $stop(Q,\mathcal{P} )$ is a sparse family. 
\end{pro}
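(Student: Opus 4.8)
The plan is to reduce the statement to the augmentation theorem of Lerner and Nazarov recalled just above: it suffices to show that for each $Q \in \Sp$ the family $stop(Q, \mathcal{P})$, viewed as a family of subcubes of $Q$, is itself $\eta$-sparse for a constant $\eta$ (one may take $\eta = 1/2$) that does not depend on $Q$. Granting this, since $\mathscr{F}(Q) = stop(Q, \mathcal{P})$ is then a sparse family for every $Q \in \Sp$, the cited result yields that the augmentation $\tilde{\Sp}$ is sparse.

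So I would fix $Q \in \Sp$ and, for a cube $R$, write $\mathrm{ch}_{\mathcal{P}}(R)$ for the collection of maximal dyadic cubes $R' \subsetneq R$ with $\mathcal{P}(R, R') = false$; these are pairwise disjoint and contained in $R$, and $(R, R')$ is one step precisely when $R' \in \mathrm{ch}_{\mathcal{P}}(R)$. I then set $\mathcal{C}_0 := \{Q\}$ and $\mathcal{C}_{k+1} := \bigcup_{R \in \mathcal{C}_k} \mathrm{ch}_{\mathcal{P}}(R)$; unwinding the definition of finite step shows $stop(Q, \mathcal{P}) = \bigcup_{k \geq 0} \mathcal{C}_k$. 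The structural bookkeeping is to verify, by induction on $k$: (i) each $\mathcal{C}_k$ is a pairwise disjoint family (since for $R \in \mathcal{C}_k$ the cubes of $\mathrm{ch}_{\mathcal{P}}(R)$ are disjoint subcubes of $R$, and distinct $R \in \mathcal{C}_k$ are disjoint); (ii) every cube of $\mathcal{C}_j$ with $j > k$ is strictly contained in some cube of $\mathcal{C}_k$; (iii) hence the $\mathcal{C}_k$ are pairwise disjoint across different $k$ as well (a cube lying in $\mathcal{C}_k \cap \mathcal{C}_j$ with $k < j$ would have to coincide with, yet be strictly contained in, some cube of $\mathcal{C}_k$), and for $R \in \mathcal{C}_k$ one has $\{S \in \mathcal{C}_{k+1} : S \subseteq R\} = \mathrm{ch}_{\mathcal{P}}(R)$.

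With this structure the hypothesis does the rest. For $R \in \mathcal{C}_k$ the cubes of $\mathrm{ch}_{\mathcal{P}}(R)$ form a finite pairwise disjoint subfamily of $R$ on which $\mathcal{P}(R, \cdot) = false$, so by hypothesis $\sum_{S \in \mathrm{ch}_{\mathcal{P}}(R)} |S| < \tfrac12 |R|$; consequently $E_R := R \setminus \bigcup_{S \in \mathrm{ch}_{\mathcal{P}}(R)} S$ satisfies $|E_R| > \tfrac12 |R|$ (and, summing over the disjoint family $\mathcal{C}_k$, one gets the geometric decay $\sum_{R \in \mathcal{C}_k} |R| < 2^{-k} |Q|$, which is a byproduct but not strictly needed). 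Using (i)--(iii) the family $\{E_R : R \in stop(Q, \mathcal{P})\}$ is pairwise disjoint: two distinct cubes $R_1, R_2 \in stop(Q, \mathcal{P})$ are either disjoint -- then so are $E_{R_1}, E_{R_2}$ -- or nested, say $R_1 \subsetneq R_2$ with $R_2 \in \mathcal{C}_k$, in which case $R_1$ is contained in a cube of $\mathcal{C}_{k+1}$ sitting inside $R_2$, i.e.\ in a cube of $\mathrm{ch}_{\mathcal{P}}(R_2)$, so $E_{R_1} \subset R_1$ is disjoint from $E_{R_2} = R_2 \setminus \bigcup \mathrm{ch}_{\mathcal{P}}(R_2)$. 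Since also $E_R \subset R$ with $|E_R| > \tfrac12|R|$, the family $stop(Q, \mathcal{P})$ is $\tfrac12$-sparse, and the augmentation theorem of Lerner and Nazarov completes the proof.

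I expect the only genuine work to be the combinatorics of (i)--(iii): recognizing that $stop(Q, \mathcal{P})$ organizes into a rooted tree of strictly nested generations $\mathcal{C}_k$ whose node $R$ has children exactly $\mathrm{ch}_{\mathcal{P}}(R)$. Once this is in place the stopping hypothesis translates verbatim into $\tfrac12$-sparseness and the cited result closes the argument. One should also keep in mind that the hypothesis gets applied at every cube occurring in the generations $\mathcal{C}_k$, not merely at the cubes of $\Sp$ itself.
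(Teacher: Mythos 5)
Your argument is correct, and it supplies a proof where the paper gives none: the proposition is stated here only with a citation to Lerner and Nazarov, so there is no in-paper argument to compare against. Your route --- organizing $stop(Q,\mathcal{P})$ into generations $\mathcal{C}_k$ whose node $R$ has children $\mathrm{ch}_{\mathcal{P}}(R)$, checking the tree combinatorics (i)--(iii), showing the sets $E_R = R \setminus \bigcup_{S \in \mathrm{ch}_{\mathcal{P}}(R)} S$ are pairwise disjoint with $|E_R| \ge \tfrac12 |R|$, and then invoking the augmentation theorem quoted just above the proposition --- is precisely the standard stopping-time argument, and the nesting/disjointness verifications are sound. The one point worth stressing is the remark you make at the end: as literally written the packing hypothesis is assumed only for $Q \in \Sp$, and under that literal reading the proposition is false (if $\mathcal{P}(R,S)=false$ for every proper dyadic subcube $S$ of some first-generation stopping cube $R$, then $stop(Q,\mathcal{P})$ contains every dyadic descendant of $R$ and is not Carleson). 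The hypothesis must be read as holding for every cube, or at least for every cube occurring in the stopping trees, which is how you apply it and how it is actually verified in the application within the proof of Theorem 1.1, where the bound $\sum_j |R_j| \le \tfrac12 |U|$ is checked for arbitrary $U$, not only $U \in \Sp$. Two harmless technicalities: since $\mathrm{ch}_{\mathcal{P}}(R)$ may be infinite, the hypothesis (taken over finite subfamilies) gives only $\sum_S |S| \le \tfrac12 |R|$ and hence $|E_R| \ge \tfrac12 |R|$, which still yields a uniform sparseness constant; and the paper's definition of ``one step'' contains the typo $Q \subsetneq R \subset Q$ (it should be $Q' \subsetneq R \subset Q$), which you have implicitly corrected by identifying $\mathrm{ch}_{\mathcal{P}}(R)$ with the maximal subcubes on which $\mathcal{P}(R,\cdot)$ is false.
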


Let us prove Theorem 1.1.
\begin{proof}
In view of the three lattice theorem in \cite{L-N}, there exists the family of dyadic lattices ${\{{ \mathscr{D}}_j \}}_{j=1,2, \cdots ,3^{2n} }$ so that $Q^{\rho} \subset R_Q \in {\D}_{j}$, $2^k Q^{\rho} \subset R \in {\D}_j$ and $|Q^{\rho} | \sim |R_Q|$, $|2^k {Q}^{\rho} | \sim |R|$ with  some $j$. From this, we have
\begin{eqnarray*}
 \sum_{k \ge 0} 2^{- \varepsilon k}  \sum_{\substack{Q \in \Sp \\ |Q| < 3^{-\frac{2n}{1-\rho}}  } } { \langle f \rangle}_{2,2^{{k+1}}Q^{\rho}} 1_Q (x) \lesssim \sum_j \sum_{\substack{Q \in {\Sp} \\ |Q|< 3^{-\frac{2n}{1-\rho}}   } } \sum_{\substack{R \in {\D}_j \\ R_Q \subset R} } { \left( \frac{|R_Q|}{|R|} \right)}^{\varepsilon} { \langle f \rangle}_{2,R} 1_Q (x) .
\end{eqnarray*}
Furthermore, we take $\overline{Q} \in {\D}_j$ such that $Q \subset \overline{Q}$ and $|\overline{Q}|=3^{2n}|Q|$ for any $j$, and set ${\Sp}_j=\{ \overline{Q} \ ; \ Q \in \Sp \} $, ${{\Sp}}_j'= \{ \overline{Q} \ ; \ Q \in \Sp ,  \ |Q| <3^{-\frac{2n}{1-\rho}}    \}$, of course $\Sp_j$ be a regular sparse collection. Since $Q \to \overline{Q}$ is a injective map,  we can define the $R_{\overline{Q} } :=R_{Q}$. Here, the assumption $|Q|<3^{-\frac{2n}{1-\rho}}$ gives
\begin{eqnarray*}
|\overline{Q}| =3^{2n}|Q| < {|Q|}^{\rho} \le |R_Q| ,
\end{eqnarray*}
which yields $\overline{Q} \subset R_{\overline{Q}}$. From these results, for any regular sparse family $\overline{{\Sp}_j }$ so that $ \Sp_j \subset \overline{{\Sp}_j }  \subset {\D}_j $, we obtain
\begin{eqnarray*} 
\sum_j \sum_{\substack{ Q \in {\Sp} \\ |Q| <  3^{-\frac{2n}{1-\rho}}      } } \sum_{\substack{R \in {\D}_j \\ R_Q \subset R }} { \left( \frac{|R_Q|}{|R|} \right)}^{\varepsilon} { \langle f \rangle}_{2,R} 1_Q (x) &\lesssim& \sum_j \sum_{Q \in \Sp_j'  } \sum_{\substack{R \in {\D}_j \\ R_Q \subset R} } { \left( \frac{|R_Q|}{|R|} \right)}^{\varepsilon} { \langle f \rangle}_{2,R} 1_Q (x) \\
&=& \sum_j  \sum_{U \in \overline{{\Sp}_j }} \sum_{Q \in {\Sp_j'} } \sum_{ \substack{ R \in H_{\overline{{\Sp}_j }}(U) \\ R_Q \subset R} }{ \left( \frac{|R_Q|}{|R|} \right)}^{\varepsilon} { \langle f \rangle}_{2,R} 1_Q (x) \\
&\le& \sum_j \sum_{U \in \overline{{\Sp}_j }} \sup_{R \in H_{  \overline{{\Sp}_j } } (U)} {\langle f \rangle}_{2,R}  \sum_{\substack{Q \in \Sp_j' \\ Q \subset U }} \sum_{ \substack{ R \in H_{\overline{{\Sp}_j }  }(U) \\ R_Q \subset R} } { \left( \frac{|R_Q|}{|R|} \right)}^{\varepsilon}  1_Q (x) ,
\end{eqnarray*}
where
\begin{eqnarray*}
H_{\overline{{\Sp}_j }}(U) := \{ R \in \D_j \ ; \ R \subset U, \  there \ is \ no \ cube \ P\in  \overline{\Sp_j} \ so \ that \ R \subsetneq P \subsetneq U \}.
\end{eqnarray*}
Since
\begin{eqnarray*}
\sum_{ \substack{ R \in H_{\overline{{\Sp}_j }}(U) \\ R_Q \subset R} } { \left( \frac{|R_Q|}{|R|} \right)}^{\varepsilon} \lesssim 1 ,
\end{eqnarray*}
one has
\begin{eqnarray*}
 \sum_{k \ge 0} 2^{- \varepsilon k}  \sum_{\substack{Q \in \Sp \\ |Q| < 3^{-\frac{2n}{1-\rho}}  } } { \langle f \rangle}_{2,2^{{k+1}}Q^{\rho}} 1_Q (x) \lesssim \sum_j \sum_{U \in {\overline{\Sp_j}}} \sup_{R \in H_{ \overline{{\Sp}_j }} (U)} {\langle f \rangle}_{2,R}  \sum_{\substack{Q \in \overline{{\Sp}_j  }\\ Q \subset U }}  1_Q (x) .
\end{eqnarray*}
If
\begin{eqnarray*}
\sup_{R \in H_{ \overline{{\Sp}_j }} (U)} {\langle f \rangle}_{2,R} \lesssim {\lan f\ran}_{2,U}
\end{eqnarray*}
holds, the proof will be completed. We define the map $\mathcal{P}$ by
\begin{eqnarray*}
\mathcal{P}(U,R)=\left\{  \begin{array}{ll} 
true &  {\langle f \rangle}_{2,R} \le \sqrt{2} {\langle f \rangle}_{2,U}\\
false & other
\end{array} \right. .
\end{eqnarray*}
Let ${\{R_j \}}_j$ be a pairwise disjoint dyadic child of $U$ such that $\mathcal{P} (U,R_j)= false$, then we have
\begin{eqnarray*}
\sum_j |R_j| \le \frac{1}{2} |U| \sum_j {||f||}^{2}_{L^2(R_j)} {||f||}^{-2}_{L^2(U)} \le \frac{1}{2} |U|.
\end{eqnarray*}
Hence, the argumentation of ${\Sp}_j$ by $stop(U,\mathcal{P})$ be a regular sparse family and set $\overline{\Sp_j}$. We assume that there exists the $ R \in H_{ \overline{\Sp_j} }(U)$ so that $\mathcal{P}(U,R) =false$. From the definition of $H_{ \overline{\Sp_j} }(U)$, we obtain $R \notin \overline{\Sp_j}$ which yields $R \notin stop(U, \mathcal{P})$. We take $R \subsetneq R_1 \subset U$ such that $\mathcal{P}(U,R_1)=false$. If $R_1 \neq U$,  we can take $R_1\subsetneq R_2 \subset U$ such that $\mathcal{P}(U,R_2)=false$ again. By repeating this work, we have $\mathcal{P}(U,U)= false$ which contradict the definition of $\mathcal{P}$. Hence, we have $\mathcal{P}(Q,R)=true$ and
\begin{eqnarray*}
\sup_{R\in H_{ {\Sp}_0}(Q) } {\langle f \rangle}_{2,R} \lesssim {\langle f \rangle}_{2,Q}.
\end{eqnarray*}
\end{proof}
\subsection{Weighted $L^p$ bounds for pseudodifferential operators}

This subsection is devoted to prove Theorem \ref{Coifman-Fefferman}. The class $ A_{\infty}$ denotes the set of all nonnegative locally integrable function $\omega$ such that

\begin{eqnarray*}
{[\omega]}_{A_{\infty}} := \sup_{Q} \frac{1}{\omega(Q)} \int_{Q} M(\omega 1_Q) < \infty.
\end{eqnarray*}
The sharp reverse H\"{o}lder inequality of $A_{\infty}$ weights was shown by Hyt\"{o}nen and P\'{e}rez \cite{H-P} .

\begin{thm}[\cite{H-P}]
\label{A infty}
Let $\omega \in A_{\infty}$. Then, there exists a constant $c_n$ depends on dimension $n$ such that 
\begin{eqnarray*}
{\left( \frac{1}{ |Q| } \int_Q {\omega}^{ \delta} \right)}^{ {1} / {\delta} } \le \frac{2}{|Q|} \omega(Q)
\end{eqnarray*}
 for any $Q$ where $\delta=1+c_n {[\omega]}^{-1}_{A_{\infty} }$.
\end{thm}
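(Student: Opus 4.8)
\ The plan is to argue one cube at a time, via a Calder\'on--Zygmund stopping-time decomposition of $\omega$ relative to a fixed cube $Q$, turning the (elementary) Lebesgue-measure decay of the super-level sets of a dyadic maximal function into decay in the weight $\omega$ using \emph{only} the Fujii--Wilson definition of $[\omega]_{A_\infty}$, through a single integration by parts followed by an absorption that consumes the constant $[\omega]_{A_\infty}$. Fix a cube $Q$, set $\lambda_0:=\langle\omega\rangle_{1,Q}=|Q|^{-1}\omega(Q)$, let $M^d_Q$ be the maximal operator over the dyadic subcubes of $Q$, and put $\Omega_\lambda:=\{x\in Q:\ M^d_Q(\omega 1_Q)(x)>\lambda\}$. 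A routine truncation (replacing $\omega$ by $\min(\omega,k)$ in the integrand only and letting $k\to\infty$ at the end by monotone convergence) makes every quantity below finite, which is what legitimizes the absorption. For $\lambda>\lambda_0$ one has $\Omega_\lambda=\bigsqcup_j Q^\lambda_j$, where the $Q^\lambda_j$ are the maximal dyadic subcubes of $Q$ with $\langle\omega\rangle_{1,Q^\lambda_j}>\lambda$; since the dyadic parent of $Q^\lambda_j$ has average $\le\lambda$, this forces $\langle\omega\rangle_{1,Q^\lambda_j}\le 2^n\lambda$, hence $\omega(\Omega_\lambda)\le 2^n\lambda\,|\Omega_\lambda|$.

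The crux, and the only point at which $A_\infty$ enters, is the estimate
\[
\Phi(\lambda):=\int_\lambda^\infty|\Omega_u|\,du\ \le\ [\omega]_{A_\infty}\,\omega(\Omega_\lambda),\qquad \lambda\ge\lambda_0.
\]
To prove it I would first observe that on a stopping cube $R=Q^\lambda_j$ one has $M^d_Q(\omega 1_Q)=M^d_R(\omega 1_R)$ pointwise, because every dyadic subcube of $Q$ strictly containing $R$ has average $\le\lambda<\langle\omega\rangle_{1,R}\le M^d_R(\omega 1_R)(x)$. The layer-cake formula then yields $\int_R M^d_R(\omega 1_R)=\int_R M^d_Q(\omega 1_Q)=\lambda|R|+\int_\lambda^\infty|\Omega_u\cap R|\,du$; summing over $j$ and using $\Omega_u\subseteq\Omega_\lambda$ for $u>\lambda$ gives $\int_{\Omega_\lambda}M^d_Q(\omega 1_Q)=\lambda|\Omega_\lambda|+\Phi(\lambda)$. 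On the other hand $M^d_R\le M$, so applying the definition of $[\omega]_{A_\infty}$ on each cube $R=Q^\lambda_j$ gives $\int_{\Omega_\lambda}M^d_Q(\omega 1_Q)=\sum_j\int_{Q^\lambda_j}M^d_{Q^\lambda_j}(\omega 1_{Q^\lambda_j})\le[\omega]_{A_\infty}\sum_j\omega(Q^\lambda_j)=[\omega]_{A_\infty}\omega(\Omega_\lambda)$, which is the claimed estimate. Taking $\lambda=\lambda_0$ (so $\Omega_{\lambda_0}\subseteq Q$) in particular gives $\Phi(\lambda_0)\le[\omega]_{A_\infty}\omega(Q)$.

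Finally I would set $\varepsilon:=c_n[\omega]_{A_\infty}^{-1}$ with $c_n:=2^{-(n+1)}$ (a possible loss of a dimensional factor if $M$ is taken centred only changes $c_n$) and $\delta:=1+\varepsilon$. Since $\omega\le M^d_Q(\omega 1_Q)$ a.e.\ on $Q$, writing $\omega^{1+\varepsilon}=\omega\cdot\omega^{\varepsilon}$, expanding \emph{only} the factor $\omega^{\varepsilon}$ by layer cake and applying Fubini gives $\int_Q\omega^{1+\varepsilon}\le\varepsilon\int_0^\infty\lambda^{\varepsilon-1}\omega(\Omega_\lambda)\,d\lambda=\lambda_0^{\varepsilon}\omega(Q)+\varepsilon Y$, where $Y:=\int_{\lambda_0}^\infty\lambda^{\varepsilon-1}\omega(\Omega_\lambda)\,d\lambda$ and I used $\Omega_\lambda=Q$ for $\lambda<\lambda_0$. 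For $Y$ I bound $\omega(\Omega_\lambda)\le 2^n\lambda|\Omega_\lambda|$, integrate by parts (with $d\Phi=-|\Omega_\lambda|\,d\lambda$), and re-insert the crux estimate, getting $Y\le 2^n\lambda_0^{\varepsilon}\Phi(\lambda_0)+2^n\varepsilon[\omega]_{A_\infty}\,Y$; since $2^n\varepsilon[\omega]_{A_\infty}=\tfrac12$ the last term is absorbed, so $Y\le 2^{n+1}\lambda_0^{\varepsilon}\Phi(\lambda_0)\le 2^{n+1}[\omega]_{A_\infty}\lambda_0^{\varepsilon}\omega(Q)$, whence $\varepsilon Y\le\lambda_0^{\varepsilon}\omega(Q)$. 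Therefore $\int_Q\omega^{\delta}\le 2\lambda_0^{\varepsilon}\omega(Q)=2\lambda_0^{\delta}|Q|$, and taking $\delta$-th roots gives $\big(|Q|^{-1}\int_Q\omega^{\delta}\big)^{1/\delta}\le 2^{1/\delta}\lambda_0\le 2\,|Q|^{-1}\omega(Q)$, which is the assertion with this $\delta$.

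The main obstacle is the crux estimate $\Phi(\lambda)\le[\omega]_{A_\infty}\omega(\Omega_\lambda)$: everything there rests on the identity $\int_{\Omega_\lambda}M^d_Q(\omega 1_Q)=\lambda|\Omega_\lambda|+\Phi(\lambda)$, and proving it cleanly requires the bookkeeping that $M^d_Q(\omega 1_Q)$ agrees with $M^d_R(\omega 1_R)$ on each stopping cube $R$. The second point one must not miss --- structural rather than technical --- is to expand only $\omega^{\varepsilon}$, keeping the prefactor $\varepsilon$: it is precisely this $\varepsilon\sim[\omega]_{A_\infty}^{-1}$ that cancels the $[\omega]_{A_\infty}$ produced by the absorption, whereas expanding the full $\omega^{1+\varepsilon}$ by layer cake would leave a spurious factor $[\omega]_{A_\infty}$ and the argument would fail.
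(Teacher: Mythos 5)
Your proof is correct: the stopping-time identity $\int_{\Omega_\lambda}M^d_Q(\omega 1_Q)=\lambda|\Omega_\lambda|+\Phi(\lambda)$, the bound $\Phi(\lambda)\le[\omega]_{A_\infty}\omega(\Omega_\lambda)$ obtained by applying the Fujii--Wilson constant on each stopping cube, and the absorption with $\varepsilon=2^{-(n+1)}[\omega]_{A_\infty}^{-1}$ all check out, and the truncation legitimizes the absorption. The paper itself gives no proof --- it simply cites the result from Hyt\"onen and P\'erez --- and your argument is essentially the original proof from that reference, so there is nothing to compare beyond noting the agreement.
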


From this theorem, we remark that
\begin{eqnarray*}
\int_Q |f| \omega &\le& { \left( \int_Q {|f|}^{ {\delta}'} \right)}^{{1} / { {\delta}'} } {\left( \int_Q {\omega}^{\delta} \right)}^{{1} / {\delta}} \\
&\le&\frac{2}{|Q|} \omega(Q) { \left( \int_Q {|f|}^{ {\delta}'} \right)}^{{1}/{ {\delta}'} } 
\end{eqnarray*}
for each nonnegative locally integrable function $f$. In particular, for any measurable subset $E \subset Q$, we have
\begin{eqnarray*}
\omega(E) \le 2 {\left( \frac{|E|}{|Q|} \right)}^{1/{\delta}'} \omega(Q)
\end{eqnarray*}
by taking $f=1_E$. To establish Theorem \ref{Coifman-Fefferman}, it suffices to prove following estimate which is shown by using Cejas, Li, P\'{e}rez and Rivera-R\'{ı}os's idea in \cite{C-L-P-R}.

\begin{lem}
Let $X : \{ cube \} \to  \{cube\}$ be a map such that $Q \subset X(Q)$ for any cube $Q$. and let
\begin{eqnarray*}
{\Lambda}_{\mathscr{S} ,r ,X} f(x) := \sum_{Q \in \Sp} { \lan f \ran}_{r,X(Q)} 1_Q(x)
\end{eqnarray*}
for any sparse family $\Sp$ and $1 \le r <\infty$. Then, for any $\omega \in A_{\infty} $ and $p \in (0, \infty)$, one has
\begin{eqnarray*}
{||{\Lambda}_{\Sp,r ,X} f ||}_{L^p (\omega)} \lesssim {[\omega]}_{A_{\infty} }{||M_rf||}_{L^p(\omega)}
\end{eqnarray*}
for any $ f \in L^{\infty}_{c}$.
\end{lem}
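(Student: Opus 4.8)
The plan is to reduce to a Coifman--Fefferman estimate for the ``infimum sparse operator'' and then to prove that estimate by a good-$\lambda$ inequality whose sharpness in $[\omega]_{A_\infty}$ comes from combining the exponential decay of the overlap function of a sparse family with the sharp reverse H\"older inequality of Theorem~\ref{A infty}. First I would reduce: since $Q\subset X(Q)$, for every $y\in Q$ one has $M_rf(y)\ge {\lan f\ran}_{r,X(Q)}$, hence ${\lan f\ran}_{r,X(Q)}\le\inf_Q M_rf$ and therefore, pointwise,
\[
{\Lambda}_{\mathscr{S},r,X}f(x)\le\Phi(x):=\sum_{Q\in\mathscr{S}}\Big(\inf_{Q}M_rf\Big)1_Q(x).
\]
Thus it suffices to show $\|\Phi\|_{L^p(\omega)}\lesssim[\omega]_{A_\infty}\|M_rf\|_{L^p(\omega)}$. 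By the three-lattice theorem \cite{L-N} I may assume $\mathscr{S}$ sits inside a single dyadic lattice $\mathscr{D}$, and by a routine exhaustion I may assume $\mathscr{S}$ finite; then $\Phi$ is a finite linear combination of indicators of dyadic cubes, so in particular $\|\Phi\|_{L^p(\omega)}<\infty$ (this finiteness will be used later to absorb a term), and I may also assume $\|M_rf\|_{L^p(\omega)}<\infty$, since otherwise there is nothing to prove.

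The core is the following good-$\lambda$ inequality: there is $c_0=c_0(n,\eta)>0$ such that for all $0<\gamma<1$ and $\lambda>0$,
\[
\omega\big(\{\Phi>2\lambda\}\cap\{M_rf\le\gamma\lambda\}\big)\le 2\,\exp\!\big(-c_0/(\gamma[\omega]_{A_\infty})\big)\,\omega\big(\{\Phi>\lambda\}\big).
\]
To prove it I would decompose $\{\Phi>\lambda\}$ (a finite union of dyadic cubes) into its maximal dyadic cubes ${\{Q_0^j\}}_j$. On a fixed $Q_0^j$ the cubes of $\mathscr{S}$ that strictly contain it all contain its dyadic parent, which meets $\{\Phi\le\lambda\}$; hence their total contribution to $\Phi$ on $Q_0^j$ is at most $\lambda$, so $\{\Phi>2\lambda\}\cap Q_0^j\subset\{\Phi_j>\lambda\}$ with $\Phi_j:=\sum_{Q\in\mathscr{S},\,Q\subseteq Q_0^j}(\inf_Q M_rf)1_Q$. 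On $Q_0^j\cap\{M_rf\le\gamma\lambda\}$ each summand of $\Phi_j(x)$ is $\le M_rf(x)\le\gamma\lambda$, so $\Phi_j(x)>\lambda$ forces the overlap count $N_j(x):=\#\{Q\in\mathscr{S}:x\in Q\subseteq Q_0^j\}$ to exceed $1/\gamma$. Since an $\eta$-sparse family has exponentially decaying overlap, $|\{x\in Q_0^j:N_j(x)>1/\gamma\}|\le\exp(-c_\eta/\gamma)\,|Q_0^j|$, and the consequence of Theorem~\ref{A infty} recorded just after it (with $1/\delta'\gtrsim_n 1/[\omega]_{A_\infty}$) upgrades this to $\omega(\cdot)\le 2\exp(-c_\eta/(\gamma\delta'))\,\omega(Q_0^j)\le 2\exp(-c_0/(\gamma[\omega]_{A_\infty}))\,\omega(Q_0^j)$; summing over $j$ gives the claim.

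Finally I would integrate. From $\|\Phi\|_{L^p(\omega)}^p=p2^p\int_0^\infty\lambda^{p-1}\omega(\{\Phi>2\lambda\})\,d\lambda$ and the splitting $\{\Phi>2\lambda\}\subset\big(\{\Phi>2\lambda\}\cap\{M_rf\le\gamma\lambda\}\big)\cup\{M_rf>\gamma\lambda\}$ one obtains
\[
\|\Phi\|_{L^p(\omega)}^p\le 2^{p+1}\exp\!\big(-c_0/(\gamma[\omega]_{A_\infty})\big)\,\|\Phi\|_{L^p(\omega)}^p+2^p\gamma^{-p}\,\|M_rf\|_{L^p(\omega)}^p .
\]
Choosing $\gamma\sim 1/[\omega]_{A_\infty}$, so that the first factor is $\le\tfrac12$, the finiteness of $\|\Phi\|_{L^p(\omega)}$ lets me absorb the first term, and since then $\gamma^{-p}\lesssim_{n,p,\eta}[\omega]_{A_\infty}^{p}$ I get $\|\Phi\|_{L^p(\omega)}\lesssim_{n,p,\eta}[\omega]_{A_\infty}\|M_rf\|_{L^p(\omega)}$, which is the assertion after undoing the exhaustion. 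The one genuinely delicate point is the last step of the good-$\lambda$ estimate: the Lebesgue bound must be \emph{exponentially} small in $1/\gamma$, so that even after being raised to the power $1/\delta'\sim 1/[\omega]_{A_\infty}$ by reverse H\"older, the choice $\gamma\sim 1/[\omega]_{A_\infty}$ still beats the $\gamma^{-p}$ loss; this ``double smallness'' is precisely what produces the \emph{linear} dependence on $[\omega]_{A_\infty}$, and is why the cruder subadditivity bound $\|\Phi\|_{L^p(\omega)}^{p}\le\sum_{Q\in\mathscr{S}}(\inf_Q M_rf)^p\omega(Q)$ (which only yields $[\omega]_{A_\infty}^{1/p}$ when $p<1$) is not enough. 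For $p\ge1$ one could alternatively bypass the good-$\lambda$ argument entirely: dualize, use ${\lan f\ran}_{r,X(Q)}\le\inf_Q M_rf$ together with $\tfrac{1}{\omega(Q)}\int_Q h\,\omega\le\inf_Q\big(\text{dyadic }\omega\text{-maximal function of }h\big)$, and apply the weighted Carleson embedding theorem with the fact that a sparse family is $c_n[\omega]_{A_\infty}$-Carleson with respect to $\omega$ (itself, again, a consequence of exponential overlap plus Theorem~\ref{A infty}).
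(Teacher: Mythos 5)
Your proposal is correct in substance and reaches the sharp linear dependence on ${[\omega]}_{A_\infty}$ by the same two engines the paper uses --- exponential decay of the overlap function of a sparse family, upgraded to $\omega$-measure by the sharp reverse H\"older inequality of Theorem \ref{A infty}, followed by the optimization $\gamma\sim{[\omega]}_{A_\infty}^{-1}$ --- but the architecture is genuinely different. The paper (following \cite{C-L-P-R}) never forms a good-$\lambda$ inequality for the operator itself: it first replaces nothing, stratifies the sparse family into ${\Sp}_m=\{Q:\ 2^m\le{\lan f\ran}_{r,X(Q)}<2^{m+1}\}$, observes that the strata with $2^m>\gamma 2^k$ live entirely inside $\{M_rf>\gamma 2^k\}$, distributes the threshold $2^k$ across the surviving strata with square-root weights, and controls each stratum on its maximal cubes via the duality bound ${\|\sum_{Q\subset U}1_Q\|}_{L^s}\le s{|U|}^{1/s}$ before summing a double series in $(k,m)$. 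You instead dominate ${\lan f\ran}_{r,X(Q)}\le\inf_QM_rf$ at the outset and run the classical Coifman--Fefferman good-$\lambda$ inequality on the maximal dyadic cubes of the superlevel set, using the ``parent meets the complement'' trick; this is cleaner and avoids the double summation, at the cost of invoking the exponential overlap bound $|\{N_j>t\}|\le e^{-c_\eta t}|Q_0^j|$ as a black box (the paper's $L^s$ estimate with constant $s|U|^{1/s}$ is exactly the dual form of this fact, via Chebyshev with $s\sim t$, so you should either reproduce that computation or cite it).

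Two points in your reduction deserve a sentence each. First, the passage to a single dyadic lattice via the three-lattice theorem is not free: after replacing $Q$ by a containing dyadic $\tilde Q$ with $|\tilde Q|\lesssim|Q|$, your good-$\lambda$ step needs each summand active at $x\in\tilde Q$ to have coefficient $\le CM_rf(x)$, i.e.\ you need $\inf_QM_rf\lesssim_n\inf_{\tilde Q}M_rf$. This is true (for any $z\in Q$ pick $R_z\ni z$ with ${\lan f\ran}_{r,R_z}>\tfrac12 M_rf(z)$; either some $R_z$ is large and can be dilated to swallow $\tilde Q$, or all $R_z\subset 3Q$ and a Vitali selection gives ${\lan f\ran}_{r,3Q}\gtrsim\inf_QM_rf$), but it must be said; alternatively you can skip the dyadic reduction entirely and, as the paper does, work with the maximal cubes of the sparse family itself rather than of the superlevel set. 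Second, your absorption step is legitimate only because of the finite-exhaustion reduction guaranteeing ${\|\Phi\|}_{L^p(\omega)}<\infty$; you noted this, which is good, since for $p<1$ there is no duality fallback.
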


\begin{proof}
Let $\gamma >0$ and we have
\begin{eqnarray*}
{||{\Lambda}_{\Sp,r ,X} f ||}^{p}_{L^p (\omega)} &\lesssim& \sum_{k \in  \Z} 2^{kp} \omega(\{    {\Lambda}_{\Sp,r ,X} f >2^k \} )\\
&\le&  \sum_{k \in \Z} 2^{kp} \omega(\{ {\Lambda}_{\Sp,r ,X} f >2^k ,  \ M_r f \le \gamma 2^k\} ) + \sum_{k\in \Z} 2^{kp} \omega(\{ M_rf > \gamma 2^k \}) \\
&\lesssim&  \sum_{k \in \Z} 2^{kp} \omega(\{ {\Lambda}_{\Sp,r ,X} f >2^k ,  \ M_r f \le \gamma 2^k\} ) + {\gamma}^{-p} {||M_rf||}^{p}_{L^p(\omega)} .\\
\end{eqnarray*}
Here, we set
\begin{eqnarray*}
{\Sp}_m &=& \{ Q \in \Sp \ ; \ 2^m \le {\lan f \ran}_{r ,X(Q)} < 2^{m+1} \}, \\
{\Sp}^{*}_m &=& \{ Q \in {\Sp}_m  \ ; \ Q  \ is \  maximal \ with \ inclusion \}
\end{eqnarray*}
for any $m \in \Z$. If $2^m>\gamma 2^k$, we obtain $M_rf(x) > \gamma 2^k$ for any $x \in Q \in {\Sp}_{m}$ from the assumption $Q \subset X(Q)$. Hence, one obtains
\begin{eqnarray*}
\omega(\{ {\Lambda}_{\Sp,r ,X} f >2^k , M_r f \le 2^k\} ) &=& \omega \left( \left\{ \sum_{2^m \le \gamma 2^k} {\Lambda}_{{\Sp}_m ,r ,X} f>2^k,  \ M_rf \le \gamma 2^k \right\} \right) \\
&\le& \sum_{  2^m \le \gamma 2^k   }  \omega ( \{ {\Lambda}_{{\Sp}_m ,r ,X} f>{\gamma}^{-1/2}2^{(m+k)/2-1} \} ) \\
& \le& \sum_{2^m \le \gamma 2^k}  \omega \left( \left\{ \sum_{Q \in {\Sp}_m} 1_Q >{\gamma}^{-1/2} 2^{(-m+k)/2-2} \right\}  \right) \\
&\le & \sum_{ 2^m \le \gamma 2^k} \sum_{U \in {\Sp}^{*}_{m} } \omega \left( \left\{ x \in U \ ; \   \sum_{Q \in {\Sp}_m , Q \subset U} 1_Q(x) > {\gamma}^{-1/2} 2^{(-m+k)/2-2} \right\}  \right) \\
&=: &\sum_{ 2^m \le \gamma 2^k} \sum_{U \in {\Sp}^{*}_{m} } \omega (E).
\end{eqnarray*}
For any $s \in (1, \infty)$, the sparseness of ${\Sp}_m$ gives 
\begin{eqnarray*}
{\left| \left| \sum_{Q \in {\Sp}_m , Q \subset U} 1_Q \right| \right| }_{L^s} &\le& \sup_{ {||g||}_{L^{s'} } =1}\sum_{Q \in {\Sp}_m , Q \subset U} \int_Q g\\
&\lesssim& \sup_{ {||g||}_{L^{s'} } =1} \int_{Q} Mg\\
&\le&  \sup_{ {||g||}_{L^{s'} } =1}{|Q|}^{1/s}  {||Mg||}_{L^{s'} } \\
&\le& s|Q|^{1/s},
\end{eqnarray*}
which yields
\begin{eqnarray*}
|E| \le 2^{(m-k)s/2+2s}{\gamma}^{-s/2} s^s|U|.
\end{eqnarray*}
From this and Theorem \ref{A infty}, we obtain
\begin{eqnarray*}
\sum_{k \in \Z} 2^{kp} \omega(\{ {\Lambda}_{\Sp,r ,X} f >2^k ,  \ M_r f \le 2^k\} ) &\lesssim& \sum_{ k \in \Z} 2^{kp} \sum_{2^m \le \gamma 2^k}2^{(m-k)s/(2{\delta}')+2s/ {\delta}'}  {\gamma}^{-s/(2{\delta}') }  s^{s/ {\delta}' } \sum_{U \in {\Sp}^{*}_{m} }\omega(U) \\
&\le& 2^{2s/ {\delta}'} {\gamma}^{s/(2{\delta}' )} s^{s/{\delta}'}\sum_{m \in \Z} 2^{ms/(2{\delta}') } \omega (\{ M_r f>2^m \} ) \sum_{2^k \ge{\gamma}^{-1} 2^m}2^{kp-ks/(2{\delta}')}\\
&\lesssim&2^{2s/{\delta}' } {\gamma}^{-p+s/{\delta}' }s^{s/{\delta}'} {||M_rf||}^{p}_{L^p(\omega)}
\end{eqnarray*}
for any $s/(2{\delta}')>p$. Since
\begin{eqnarray*}
{\delta}' = \frac{1+c_n {[\omega]}^{-1}_{A_{\infty}} }{c_n {[\omega]}^{-1}_{A_{\infty}}} \sim {[ \omega]}_{A_{\infty}},
\end{eqnarray*}
we obtain the desired inequality by taking $\gamma={[\omega]}^{-1}_{A_{\infty}}$ and $s=c{[\omega]}_{A_{\infty}}$ with some large constant $c>0$ depends on only $n$ and $p$.

\end{proof}


\section{Sparse form bounds for Pseudodifferential operators}

\subsection{ Besov-type sparse form bounds}
Beltran and Cladek \cite{B-C} established sparse form bounds of pseudodifferential operators 
\begin{eqnarray*}
|\lan a(x,D)f,g \ran  | \lesssim \Lambda_{r,s'}(f,g)
\end{eqnarray*}
with $a \in S^{m}_{\rho , \rho}$ and $m< m(r,s)$ where
\[
{m(r,s) }=\begin{cases}
-n(1-\rho)(1/r-1/2) & 1\le r \le s \le 2 \\
-n(1-\rho)(1/r-1/s) & 1\le r \le 2 \le s \le r'
\end{cases}.
\]
It is natural to ask whether the such bounds hold or not when $m=m(r,s)$. However, we do not know how to settle this problem. Therefore, we treat the case $m=m(r,s)$ by using Besov type sparse form bounds
\begin{eqnarray*}
|\lan a(x,D)f , g \ran | \lesssim \sum_{j \ge 0} 2^{j \kappa} {\Lambda}_{ \Sp_j ,r,s'}( \phi_j *f,g)
\end{eqnarray*}
with suitable $\kappa \in \R$. By using Beltran and Cladek's idea, it is not hard to see 
\begin{eqnarray*}
|\lan a(x,D)f , g \ran | \lesssim \sum_{j \ge 0} 2^{j m-jm(r,s) +j \varepsilon} {\Lambda}_{ \Sp_j ,r,s'}( \phi_j *f,g)
\end{eqnarray*}
for any $\varepsilon >0$. Our purpose is to eliminate $\varepsilon$ in the above inequality. More generally, we use
\begin{eqnarray*}
{\Lambda}^{\alpha}_{\mathcal{S} , r,s}(f,g) := {\left(\sum_{Q \in \mathcal{S}} |Q| {\langle f \rangle }^{\alpha}_{r,Q}{\langle g \rangle }^{\alpha}_{s,Q}\right)}^{1/\alpha}
\end{eqnarray*}
to obtain the following results:
\begin{thm}
\label{sparseform1}
Let $2 \le s \le \infty$ and $ 2/3 < \alpha \le1$, and $ a \in S^{m}_{\rho , \rho}$ with $m \le 0$, $0< \rho <1$. Then for any $f ,g \in \sh $, there exist the sequence of sparse families ${\{\Sp_j \}}_{j =0,1,\cdots}$ such that
\begin{eqnarray*}
|\lan a(x,D)f , g \ran | \lesssim \liminf_{R \to \infty} \sum_{j \ge 0} 2^{j \kappa_1} {\Lambda}^{\alpha}_{ \Sp_j ,2,s'}( (\phi_j *f )1_{Q_R},g),
\end{eqnarray*}
where $\kappa_1= m+n(1- \rho)(1/2-1/s)+ \rho n (1/\alpha -1)$. Here, $Q_R$ denotes the cube whose center is origin and side length is $R$.
\end{thm}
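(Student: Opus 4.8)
\textbf{Proof plan for Theorem \ref{sparseform1}.}

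The plan is to combine the dyadic decomposition $a(x,D)=\sum_{j\ge0}a_j(x,D)$ used throughout the paper with a Littlewood--Paley/Besov reorganization of the sparse form bound of Beltran and Cladek, paying careful attention to the exponent $\alpha$ rather than brutally summing a geometric series. First I would recall that for each fixed $j$ the operator $a_j(x,D)$ has symbol supported in $\{|\xi|\sim 2^j\}$ with $|\partial_x^\beta\partial_\xi^\alpha a_j|\lesssim 2^{jm}2^{-j\rho|\alpha|}2^{j\rho|\beta|}$; hence $a_j(x,D)f$ depends only on the frequency-localized piece $\phi_j*f$ (up to a harmless tail that is controlled by the rapid decay of $\widehat{\psi}$), which is why the form $\Lambda^\alpha_{\Sp_j,2,s'}((\phi_j*f)1_{Q_R},g)$ is the natural object. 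The truncation to $Q_R$ and the $\liminf_{R\to\infty}$ are present only to guarantee that the functions to which one applies the Lerner--Nazarov sparse decomposition (Theorem \ref{L-N}) satisfy the required decay-at-infinity hypothesis; I would introduce $Q_R$ from the very start and let it pass to the limit only at the end by Fatou-type monotonicity.

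The core of the argument is, for each fixed $j$, to produce a sparse family $\Sp_j$ (depending on $f,g,R$) so that
\begin{eqnarray*}
|\lan a_j(x,D)((\phi_j*f)1_{Q_R}),g\ran| \lesssim 2^{j\kappa_1}\,\Lambda^{\alpha}_{\Sp_j,2,s'}((\phi_j*f)1_{Q_R},g).
\end{eqnarray*}
For this I would run the now-standard Lerner--Nazarov stopping-time construction: form a sparse collection adapted to the pair $(\phi_j*f,g)$, and on each stopping cube $Q$ estimate the local piece of $\lan a_j(x,D)(\cdot),g\ran$ using the kernel bounds for $a_j$. The single-scale operator $a_j(x,D)$ obeys an $L^2\to L^{2/\rho}$ (equivalently, after interpolation, an $L^2\to L^2$ with a $2^{j(m+n(1-\rho)/2)}$ gain, and more generally an $L^2\to L^{s}$) estimate whose operator norm is exactly $2^{j(m+n(1-\rho)(1/2-1/s))}$; together with the finite-overlap of the dilated cubes $2^kQ^\rho$ appearing in Lemma \ref{pointwise} and the off-diagonal decay obtained by integration by parts in $\xi$ (gaining $2^{-j\rho N}\ell(Q)^{-\rho N}$ against the separation $|x-y|$), this yields the scale factor $2^{jm}2^{jn(1-\rho)(1/2-1/s)}$ together with the $2^{j\rho n(1/\alpha-1)}$ coming from passing from $\langle\cdot\rangle_{2,Q}$ on a cube of the ``$\rho$-rescaled'' size $Q^\rho$ to an honest $\ell^\alpha$ sum over the sparse family (this is precisely the mechanism already visible in Section 2.2, where the powers $\varepsilon$ and $\rho$ enter when one converts $2^{k+1}Q^\rho$-averages into a sparse form). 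The exponent $\alpha$ enters here: by Hölder's inequality in the sequence variable the $\alpha$-summability costs $2^{j\rho n(1/\alpha-1)}$, which is exactly the last term in $\kappa_1$; the hypothesis $\alpha>2/3$ is what keeps the resulting $\ell^\alpha$ norm finite once summed against the $j$-decay.

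Finally I would sum over $j\ge0$. With the per-$j$ bound above, the triangle inequality in $\ell^\alpha$ (valid since $\alpha\le1$ makes $\|\cdot\|_{\ell^\alpha}$ a quasinorm, or more simply bound $\sum_j$ crudely after extracting the $2^{j\kappa_1}$ weight) gives
\begin{eqnarray*}
|\lan a(x,D)((\phi_j*f)1_{Q_R} \text{ summed }),g\ran|\lesssim \sum_{j\ge0}2^{j\kappa_1}\Lambda^{\alpha}_{\Sp_j,2,s'}((\phi_j*f)1_{Q_R},g),
\end{eqnarray*}
and passing $R\to\infty$ along a subsequence realizing the $\liminf$ (using that $(\phi_j*f)1_{Q_R}\to\phi_j*f$ in $L^2_{loc}$ and dominated convergence in the pairing against $g\in\sh$) closes the proof. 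The main obstacle, as in Section 2.2, is not any single estimate but the bookkeeping: one must choose the sparse families $\Sp_j$ compatibly across $j$ (via the three-lattice theorem, so that the dilated and $\rho$-rescaled cubes $2^kQ^\rho$ all sit inside fixed dyadic lattices with comparable measure) and verify that converting the Lemma \ref{pointwise}-type pointwise bound into a sparse \emph{form} bound does not lose the endpoint — i.e. that the $2^{-\varepsilon k}$ and $(|R_Q|/|R|)^\varepsilon$ summations are absorbed without generating an extra $\varepsilon$ in the exponent of $2^j$. Getting $\kappa_1$ sharp (no spurious $+j\varepsilon$) is precisely the point where one must use the $\ell^\alpha$-form with $\alpha<1$ instead of $\ell^1$, since the gain $\sum_k 2^{-\varepsilon k}<\infty$ must be traded cleanly for the $2^{j\rho n(1/\alpha-1)}$ factor.
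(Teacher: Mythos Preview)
Your plan has a genuine gap: you are trying to push the Section~2 pointwise machinery (Lemma~\ref{pointwise}, the $Q^\rho$-rescaling, the three-lattice theorem) into a sparse \emph{form} bound with exponent $\alpha<1$, but you never explain a concrete mechanism by which the $\ell^\alpha$-form $\Lambda^\alpha_{\Sp,2,s'}$ actually arises. The Section~2 argument produces a pointwise bound of the shape $\sum_{Q}\langle f\rangle_{2,Q}\sum_{R\subset Q}1_R$, which is not a $\Lambda^\alpha$-form; ``H\"older in the sequence variable'' does not convert it into one, and your claim that the $2^{-\varepsilon k}$ sum over dilations can be ``traded cleanly'' for the factor $2^{j\rho n(1/\alpha-1)}$ conflates the $k$-sum (over cube dilations) with the $j$-sum (over frequency scales). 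Likewise, the constraint $\alpha>2/3$ is not a summability condition on the $j$-series as you suggest.

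The paper takes a completely different route that you should know about. It introduces, for each single-scale operator $T=a_j(x,D)$, the grand maximal function $M_{T,s}f(x)=\sup_{Q\ni x}|Q|^{-1/s}\|T(f1_{(3Q)^c})\|_{L^s(Q)}$ and proves an abstract principle (Proposition~\ref{sp}): if $T$ and $M_{T,s}$ are weak-type $(r,p)$, then $|\langle Tf,g\rangle|\lesssim\Lambda^\alpha_{\Sp,r,s'}(f,g)$ with $1/\alpha=1/r+1/p'$. This is where $\alpha<1$ comes from --- it encodes the smoothing $p>r$, not any $\ell^\alpha$ summation trick. The proof of Theorem~\ref{sparseform1} then reduces to operator-norm estimates: integration by parts gives the pointwise bound $M_{a_j(x,D),\infty}f\lesssim 2^{jm+jn(1-\rho(1-\gamma))/2}(M^\gamma(|f|^2))^{1/2}$, hence weak-type $(2,p_0)$ bounds for all $p_0\ge2$; combining with $\|M_{a_j(x,D),p_1}\|_{L^2\to L^{p_1,\infty}}\lesssim 2^{jm+jn(1/2-1/p_1)}$ and interpolating (Corollary~\ref{inter1}) yields exactly $2^{j\kappa_1}$ with $1/\alpha=1/2-(1-\theta)/p_0-\theta/p_1+1<3/2$, which is the source of the restriction $\alpha>2/3$. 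No $Q^\rho$-rescaling, no three-lattice theorem, no Lerner--Nazarov pointwise decomposition is used here.
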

\begin{thm}
\label{sparseform2}
$({\rm{i}})$ Let $ 2 \le s \le \infty$ and $s'/2 < \alpha \le 1$, and $ a \in S^{m}_{\rho , \rho}$ with $m \le 0$, $0< \rho <1$. Then for any $f ,g \in \sh $, there exist the sequence of sparse families ${\{\Sp_j \}}_{j =0,1,\cdots}$ such that
\begin{eqnarray*}
|\lan a(x,D)f , g \ran | \lesssim  \liminf_{R \to \infty} \sum_{j \ge 0} 2^{j \kappa_2} {\Lambda}^{\alpha}_{ \Sp_j ,s',s'}(( \phi_j *f) 1_{Q_R}  ,g),
\end{eqnarray*}
where $\kappa_2= m+n(1- \rho)(1-2/s)+\rho n(1/\alpha -1)$.

\noindent$({\rm{ii}})$ Let $ 1\le s' \le r \le 2 \le s \le \infty$ and $ a \in S^{m}_{\rho , \rho}$ with $m \le 0$, $0< \rho <1$. Then for any $f ,g \in \sh$, there exist the sequence of sparse families ${\{\Sp_j \}}_{j =0,1,\cdots}$ such that
\begin{eqnarray*}
|\lan a(x,D)f , g \ran | \lesssim  \liminf_{R \to \infty} \sum_{j \ge 0} 2^{j \kappa_3} {\Lambda}_{ \Sp_j ,r,s'}( (\phi_j *f)1_{Q_R},g),
\end{eqnarray*}
where $\kappa_3= m+n(1- \rho)(1/r-1/s)$.
\end{thm}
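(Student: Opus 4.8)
\section*{Proof proposal for Theorem \ref{sparseform2}}

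The plan is to reduce both parts to a single dyadic piece $a_j(x,D)$ and then run the Lerner--Nazarov sparse decomposition on each piece, exactly as in the proof of Theorem \ref{sparse bound}, but now keeping careful track of the $j$-dependent constants so that no $\varepsilon$-loss appears. First I would write $a(x,D)f = \sum_{j\ge 0} a_j(x,D)f$ with $a_j(x,\xi)=a(x,\xi)\hphi_j(\xi)$, so that $a_j(x,D)f = a_j(x,D)(\phi_j * f)$ up to a harmless fattening of the frequency support (replace $\phi_j$ by $\phi_{j-1}+\phi_j+\phi_{j+1}$); this is what produces the $\phi_j*f$ on the right-hand side. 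Then $\langle a(x,D)f,g\rangle = \sum_j \langle a_j(x,D)(\phi_j*f),g\rangle$, and for each fixed $j$ the operator $a_j(x,D)$ has a kernel $K_j(x,y)$ with the Schwartz-type off-diagonal decay already extracted in the proof of Lemma \ref{pointwise} (integration by parts in $\xi$ gives $|K_j(x,y)|\lesssim 2^{jn-j\rho N}|x-y|^{-N}$ after using $|\partial_\xi^\alpha a_j|\lesssim 2^{-j\rho|\alpha|+jm}$ on a ball of radius $\sim 2^j$; here I am tracking the $2^{jm}$ factor too).

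The heart of the matter is a \emph{single-scale sparse form bound} for $a_j(x,D)$: for each $j$ there is a sparse family $\Sp_j$ with
\begin{eqnarray*}
|\langle a_j(x,D)h,g\rangle| \lesssim 2^{j\kappa} \sum_{Q\in\Sp_j} |Q|\,\langle h\rangle_{u,Q}\langle g\rangle_{s',Q},
\end{eqnarray*}
where $h=\phi_j*f$, $u\in\{r,s'\}$ is the exponent appropriate to the case, and $\kappa$ is the exponent claimed in the statement. To get this I would use the local smoothing / Sobolev-embedding estimates for $a_j(x,D)$: since $a_j$ is frequency-localized to $|\xi|\sim 2^j$ with $a_j\in S^m_{\rho,\rho}$, one has $L^2\to L^q$ bounds with operator norm $\sim 2^{jm + j n(1-\rho)(1/2-1/q)}$ for $q\ge 2$ (this is the frequency-localized version of the Fefferman--Stein / Miyachi estimate, obtained by interpolating the trivial $L^2\to L^2$ bound $2^{jm}$ with an $L^2\to L^\infty$ or $L^1\to L^2$ bound, and exploiting that on a fixed frequency annulus the $S^m_{\rho,\rho}$ condition mimics $S^m_{1,0}$ up to $2^{jn(1-\rho)}$ losses in the relevant directions). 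Running the Lerner--Nazarov machine with the oscillation $\omega_\lambda$ controlled by these $L^2\to L^q$ bounds --- precisely as in Lemma \ref{pointwise}, choosing the stopping level $\alpha$ proportional to the relevant norm of $h$ restricted to a dilate of $Q$ --- produces the sparse family and the local averages; the off-diagonal tails are summed using the $|x-y|^{-N}$ decay of $K_j$ with $N$ chosen large, which costs only an $O(1)$ (not $j$-dependent after the annulus normalization) factor. For part (ii), where $1\le s'\le r\le 2\le s$, one uses instead the $L^r\to L^s$ bound for $a_j(x,D)$, whose frequency-localized operator norm is $\sim 2^{jm+jn(1-\rho)(1/r-1/s)}$ by the same interpolation, giving $\kappa_3$; for part (i) one takes $r=s'$ (so the average on $f$ is $\langle\cdot\rangle_{s',Q}$) and the exponent $2n(1-\rho)(1/s'-1/2)$ collapses to $n(1-\rho)(1-2/s)$, giving $\kappa_2$.

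Once the single-scale bounds are in hand, summing over $j$ is immediate in part (ii) because there the right-hand side is the plain $\Lambda_{\Sp_j,r,s'}$, so
\begin{eqnarray*}
|\langle a(x,D)f,g\rangle| \le \sum_{j\ge 0} |\langle a_j(x,D)(\phi_j*f),g\rangle| \lesssim \sum_{j\ge 0} 2^{j\kappa_3}\Lambda_{\Sp_j,r,s'}(\phi_j*f,g),
\end{eqnarray*}
and the truncation $1_{Q_R}$ together with $\liminf_{R\to\infty}$ enters exactly as in Theorem \ref{sparse bound} to make the Lerner--Nazarov decomposition applicable to the non-compactly-supported $\phi_j*f$ (one proves the estimate with $f$ replaced by $f1_{Q_R}$, for which the decomposition is legitimate, and then passes to the limit). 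For part (i) the extra power $\rho n(1/\alpha-1)$ in $\kappa_2$ and the passage from $\Lambda$ to $\Lambda^\alpha$ come from a reverse-H\"older / $\ell^\alpha$-versus-$\ell^1$ trick on the sparse sum: since $\Sp_j$ is sparse, $\sum_Q |Q|\langle h\rangle_{s',Q}\langle g\rangle_{s',Q}$ can be compared to $\big(\sum_Q |Q|\langle h\rangle_{s',Q}^\alpha\langle g\rangle_{s',Q}^\alpha\big)^{1/\alpha}$ at the price of a geometric factor in the number of generations, which after the dyadic $j$-summation contributes exactly $2^{j\rho n(1/\alpha-1)}$ (the scale $2^{-j\rho}$ being the natural spatial scale associated with $a_j\in S^m_{\rho,\rho}$, as already seen in Lemma \ref{pointwise} through the appearance of $Q^\rho$). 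I expect the main obstacle to be making this last step quantitatively honest --- i.e., proving the $\ell^1\to\ell^\alpha$ comparison on a sparse family with a constant that is genuinely $2^{j\rho n(1/\alpha-1)}$ and no worse, which presumably requires organizing $\Sp_j$ into $\sim j$ layers by size relative to the critical scale $3^{-2n/(1-\rho)}$ and applying H\"older layer by layer, mirroring the $H_{\overline{\Sp_j}}(U)$ bookkeeping in the proof of Theorem \ref{sparse bound}; establishing the frequency-localized $L^r\to L^s$ bounds for $S^m_{\rho,\rho}$ symbols with the sharp $2^{jn(1-\rho)(1/r-1/s)}$ constant (rather than a lossy $2^{j(n(1-\rho)(1/r-1/s)+\varepsilon)}$) is the other delicate point, and is where the hypothesis $m\le 0$ and the restriction to the annulus $|\xi|\sim 2^j$ are used crucially.
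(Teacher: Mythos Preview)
Your proposal has a genuine gap in part (i), and the overall strategy diverges from the paper in a way that matters.

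The paper does \emph{not} obtain the $\Lambda^\alpha$ form by first proving a $\Lambda^1$ bound and then converting $\ell^1\to\ell^\alpha$ afterward. Such a conversion cannot work: for a sparse family $\Sp$ and $b_Q=\langle h\rangle_{s',Q}\langle g\rangle_{s',Q}$ one has $\Lambda^1=\sum_Q|Q|b_Q$ and $\Lambda^\alpha=(\sum_Q|Q|b_Q^\alpha)^{1/\alpha}$, and already for a single cube the ratio $\Lambda^1/\Lambda^\alpha=|Q|^{1-1/\alpha}$ is unbounded in both directions. The ``layering by size relative to $2^{-j\rho}$'' idea would only help if the sparse family produced by the stopping-time argument consisted of cubes of a single scale $\sim 2^{-j\rho n}$, but it does not --- the Lerner--Nazarov family contains cubes of all sizes determined by $f,g$, not by $j$. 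So the factor $2^{j\rho n(1/\alpha-1)}$ cannot be extracted this way, and you correctly flagged this as the main obstacle: it is in fact fatal to the approach.

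What the paper does instead is build $\alpha<1$ into the sparse machinery from the start. Proposition~\ref{sp} says that if $T$ and the grand maximal operator $M_{T,s}f(x)=\sup_{Q\ni x}|Q|^{-1/s}\|T(f1_{(3Q)^c})\|_{L^s(Q)}$ are both weak-type $(r,p)$ with $1/p=1/r-1/\alpha+1$, then $|\langle Tf,g\rangle|\lesssim\Lambda^\alpha_{\Sp,r,s'}(f,g)$; the parameter $\alpha$ enters through the gap $1/r-1/p$, not through any post-hoc manipulation. For $T=a_j(x,D)$ the required weak-type bound on $M_{a_j(x,D),s}$ comes from the pointwise estimate $M_{a_j(x,D),\infty}f(x)\lesssim 2^{jm+jn(1-\rho(1-\gamma))}M^\gamma f(x)$ (an $L^1$-type kernel bound, not the $L^2$-type bound you used in Lemma~\ref{pointwise}) together with a bespoke complex-interpolation lemma (Lemma~\ref{inter3}) that interpolates the $L^2\to L^2$ bound of $a_j(x,D)$ against this pointwise control of $M_{a_j(x,D),\infty}$; this lemma is needed precisely because $|a_j(x,D)f|\le a_j(x,D)(|f|)$ fails, so one cannot run the standard Stein--Weiss interpolation on $M_{T,s}$ directly. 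Part (ii) is then obtained by Marcinkiewicz interpolation between the resulting bound $\|M_{a_j(x,D),s}\|_{L^{s'}\to L^{s',\infty}}$ (at $\alpha=1$) and $\|M_{a_j(x,D),s}\|_{L^2\to L^{2,\infty}}$, followed by Proposition~\ref{sp} with $\alpha=1$. Note also that your asserted frequency-localized $L^r\to L^s$ norm $\sim 2^{jm+jn(1-\rho)(1/r-1/s)}$ is not what a general $S^m_{\rho,\rho}$ symbol satisfies (there is no dispersive gain), and the paper never uses such a bound; the correct input is the $L^r\to L^{r,\infty}$ bound on $M_{a_j(x,D),s}$ with that exponent.
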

To prove Theorem \ref{sparseform1} and Theorem \ref{sparseform2}, we introduce maximal operators $M_{T,s}$ defined by
\begin{eqnarray*}
M_{T,s} f(x) := \sup_{ Q \ni x} {|Q|}^{-1/s} {|| T(f 1_{(3Q)^{c} } )||}_{ L^s (Q)}
\end{eqnarray*}
for each linear operators $T$ and $s \in[1,\infty]$.

\begin{pro}
\label{sp}
Let $1\le r <s \le \infty$ and $0< \alpha \le 1 $, and $T$ denotes the linear operators on function spaces. We assume weak-type $(r,p)$ of $T$ and $M_{T,s}$ with
\begin{eqnarray*}
\frac{1}{p} &=& \frac{1}{r}-\frac{1}{\alpha} +1 . \\
\end{eqnarray*}
Then, for any $f \in L^{\infty}_c$ and $ g \in \sh$, there exists the sparse family $\Sp$ such that
\begin{eqnarray*}
 |\langle Tf ,g \rangle | \lesssim ( {||T||}_{L^r \to L^{p,\infty} } + {|| M_{T,s}||}_{L^r \to L^{p,\infty}  })     {\Lambda}^{\alpha}_{\mathcal{S} , r,s}(f,g).
\end{eqnarray*}
\end{pro}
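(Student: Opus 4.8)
The plan is to run the standard Calderón–Zygmund-type stopping-time / recursion argument that produces a sparse family, but with the local oscillation replaced by the maximal operator $M_{T,s}$ and with the exponents chosen so that everything is measured in the $L^r$–$L^s$ scale that appears in $\Lambda^\alpha_{\mathcal S,r,s}$. First I would fix a large cube $Q_0$ containing the support of $f$ and reduce to proving the pointwise-in-$g$ bound on $Q_0$; by translating and rescaling we may work inside one dyadic lattice (and at the end pay the usual $3^n$ lattices, absorbed into $\lesssim$). The engine is a single-cube estimate: for a cube $Q$ I want to select a family of pairwise disjoint dyadic children $\{Q_j\}\subset \mathrm{stop}(Q)$ such that
\begin{eqnarray*}
\int_Q |Tf\cdot g|\, \lesssim \big({||T||}_{L^r\to L^{p,\infty}}+{||M_{T,s}||}_{L^r\to L^{p,\infty}}\big)\,|Q|\,{\langle f\rangle}_{r,3Q}\,{\langle g\rangle}_{s',Q} \;+\; \sum_j \int_{Q_j}|Tf\cdot g|,
\end{eqnarray*}
with $\sum_j |Q_j|\le \tfrac12 |Q|$, and then iterate. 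Here the localization $T(f1_{(3Q)^c})$ in the definition of $M_{T,s}$ is exactly what lets me split $Tf = T(f1_{3Q}) + T(f1_{(3Q)^c})$: the first piece is controlled on $Q$ by the weak-$(r,p)$ bound on $T$ applied to $f1_{3Q}$, and the second piece is controlled pointwise on $Q$ (up to the exceptional set) by $M_{T,s}f$ on the parent.

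The stopping cubes are chosen by the usual two thresholds: $Q_j$ is a maximal dyadic subcube of $Q$ on which either ${\langle f\rangle}_{r,3Q_j} > C\,{\langle f\rangle}_{r,3Q}$ or the truncated maximal function $M_{T,s}f$ exceeds $C$ times its "average" on $Q$; with $C$ large (depending only on $n$) the weak-type bounds force $\sum_j|Q_j|\le\tfrac12|Q|$. Off $\bigcup_j Q_j$ one has the pointwise control $|T(f1_{(3Q)^c})(x)|\lesssim M_{T,s}f(x)\lesssim {\langle\cdots\rangle}$, and on the exceptional set one uses the weak-$(r,p)$ bound to estimate its $g$-integral. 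The one genuinely nonstandard point is the exponent $\alpha$ and the presence of $\Lambda^\alpha$ rather than $\Lambda^1$: the relation $1/p = 1/r - 1/\alpha + 1$ is precisely Hölder's inequality in the form $|Q|^{-1}\int_Q |Tf\cdot g| \le {\langle Tf\rangle}_{p,Q}{\langle g\rangle}_{p',Q}$ rearranged against $\|f\|_{L^r}$ and the $\alpha$-power occurring when one sums the single-cube estimates — so I would carry the bound in the form $\sum_{Q\in\mathcal S}|Q|{\langle f\rangle}_{r,3Q}^\alpha {\langle g\rangle}_{s',Q}^\alpha$ from the start, using $\ell^\alpha$-superadditivity (valid since $\alpha\le1$) to pass from the recursion to the sum, and at the very end replacing $3Q$ by $Q$ via the three-lattice theorem (this is where $f1_{3Q}$ gets absorbed, shifting to a neighboring lattice where $3Q$ sits inside a comparable cube).

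Carrying out the recursion: starting from $Q_0$ one gets $\mathcal S = \{Q_0\}\cup\{Q_j\}\cup\{\text{their stopping children}\}\cup\cdots$, which is sparse because the stopping condition gives $|Q\setminus\bigcup_j Q_j|\ge\tfrac12|Q|$, so $E_Q := Q\setminus\bigcup_j Q_j$ does the job; summing the single-cube estimate over the tree telescopes the residual $\sum_j\int_{Q_j}|Tf\cdot g|$ terms and leaves exactly $\sum_{Q\in\mathcal S}|Q|{\langle f\rangle}_{r,3Q}^\alpha {\langle g\rangle}_{s',Q}^\alpha$, i.e. $\Lambda^\alpha_{\mathcal S,r,s}(f,g)^\alpha$, raised to the $1/\alpha$ after taking $\alpha$-th roots. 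The main obstacle I anticipate is not any single inequality but getting the bookkeeping of $\alpha$ and $p$ consistent: one must verify that the weak-$(r,p)$ endpoint with this specific $p$ is exactly what is needed for (a) the exceptional-set sum to close and (b) the single-cube estimate to come out with ${\langle g\rangle}_{s',Q}$ rather than some other exponent — in particular checking that $p' = $ the Hölder-conjugate that pairs with ${\langle g\rangle}_{s',Q}$ after the $\alpha$-rescaling. Everything else (the splitting $f = f1_{3Q}+f1_{(3Q)^c}$, the pointwise domination by $M_{T,s}$, the sparseness of the stopping family, the three-lattice reduction) is routine and follows the Lerner–Nazarov / Lacey template.
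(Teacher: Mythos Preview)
Your overall recursion/stopping-time skeleton matches the paper's, but the handling of $\alpha$ has a genuine gap. You write the single-cube estimate with main term $|Q|\,{\langle f\rangle}_{r,3Q}{\langle g\rangle}_{s',Q}$ and propose to recover the $\alpha$-form afterwards by $\ell^\alpha$-superadditivity. Neither step works when $\alpha<1$. First, the weak-$(r,p)$ bound with $1/p=1/r-1/\alpha+1$ does \emph{not} give $|E|\le \tfrac12|Q|$ for the exceptional set defined with threshold $\lambda\,{\langle f\rangle}_{r,3Q}$: one computes $|E|\lesssim \lambda^{-p}|Q|^{p/r}$, and since $p>r$ this fails for large cubes no matter how $\lambda$ is chosen. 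Second, even granting your recursion, applying $(\sum x_i)^\alpha\le \sum x_i^\alpha$ and iterating yields $\big(\sum_{Q}|Q|^{\alpha}{\langle f\rangle}^\alpha{\langle g\rangle}^\alpha\big)^{1/\alpha}$, which is not $\Lambda^\alpha_{\mathcal S,r,s'}(f,g)=\big(\sum_Q |Q|\,{\langle f\rangle}^\alpha{\langle g\rangle}^\alpha\big)^{1/\alpha}$; the power on $|Q|$ is wrong and the two quantities are not comparable.

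The paper fixes both issues at once by building the factor $|Q|^{1/\alpha-1}$ into the stopping threshold. One sets
\[
E=\{x\in Q:\ |T(f1_{3Q})(x)|>\lambda\,|Q|^{1/\alpha-1}{\langle f\rangle}_{r,3Q}\}\cup\{x\in Q:\ M_{T,s}(f1_{3Q})(x)>\lambda\,|Q|^{1/\alpha-1}{\langle f\rangle}_{r,3Q}\},
\]
so that the weak-$(r,p)$ bound gives exactly $|E|\lesssim \lambda^{-p}|Q|$ (this is precisely where $1/p=1/r-1/\alpha+1$ is used), and then performs a Calder\'on--Zygmund decomposition of $1_E$ to obtain the children $\{P_j\}$ with $|P_j\cap E^c|\ge \tfrac12|P_j|$. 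The resulting single-cube estimate carries $|Q|^{1/\alpha}{\langle f\rangle}_{r,3Q}{\langle g\rangle}_{s',Q}$; iterating gives $\sum_{Q\in\mathcal S}|Q|^{1/\alpha}{\langle f\rangle}_{r,3Q}{\langle g\rangle}_{s',Q}$, and only then does one use the $\ell^\alpha$-inequality $\sum a_Q\le(\sum a_Q^\alpha)^{1/\alpha}$ with $a_Q=|Q|^{1/\alpha}{\langle f\rangle}{\langle g\rangle}$ to land on $\Lambda^\alpha$. A secondary point: the recursion should be on $\int_Q T(f1_{3Q})\,g$ rather than $\int_Q|Tf\cdot g|$, so that the telescoping matches the localization $f\mapsto f1_{3P_j}$ needed at the next level; your stopping condition on ${\langle f\rangle}_{r,3Q_j}$ is then unnecessary.
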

Proposition \ref{sp} with $\alpha =1$ was proved by Lerner in \cite{L-1}. The proposition with general $\alpha$ is proved in a similar manner, but we give the proof for reader's convenience.
\begin{lem}
\label{splem}
.Let $1\le r <s \le \infty$ and $0< \alpha \le 1$, $f \in L^{\infty}_{c}$ and $g \in \sh$, and $T$ denotes the linear operators on function spaces. We assume that for any cubes $Q \subset \Rn$ there exists some family $\mathscr{F}(Q) $ of dyadic child of $Q$ such that
\begin{eqnarray*}
&{\rm(F {\mathchar`-}1)} & \ {\mathcal{F}}_Q \ is \ pairwise \ disjoint, \\ 
&{\rm(F {\mathchar`-}2)} &  \ \sum_{ P \in {\mathcal{F}}_Q } |P| \le \frac{1}{2} |Q|,\\ 
&{\rm(F {\mathchar`-}3)} & \ \left| \int_Q T(f 1_{3Q}) g dx \right| \le ^{\exists}C {|Q|}^{1/\alpha} {\langle f \rangle}_{r,3Q} {\langle g \rangle}_{s,Q} +\sum_{ P\in {\mathcal{F}}_Q } \left| \int_P T(f 1_{3P}) g dx \right|.
\end{eqnarray*}
Then, there exists the sparse family $\Sp$ such that
\begin{eqnarray*}
 |\langle Tf ,g \rangle | \le C   {\Lambda}^{\alpha}_{\mathcal{S} , r,s}(f,g).
\end{eqnarray*}
\end{lem}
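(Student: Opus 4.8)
The plan is to iterate the recursive estimate (F-3) starting from a single large cube containing the support of $f$ and build up the sparse family $\Sp$ by collecting the cubes produced at each stage of the iteration. First I would use the regularity of the dyadic lattice (or just take $Q_0$ a dyadic cube large enough that $\mathrm{supp}\, f \subset 3Q_0$ and that $f1_{3Q_0}=f$), so that $\lan Tf,g\ran = \int_{Q_0} T(f1_{3Q_0})g\,dx$ up to controlling a tail term; more carefully, since $f\in L^\infty_c$, I pick $Q_0$ so that $3Q_0$ already contains the support, and then $\lan Tf,g\ran$ is essentially $\int_{Q_0}T(f1_{3Q_0})g$. Apply (F-3) to $Q_0$: this produces the ``principal'' term $C|Q_0|^{1/\alpha}\lan f\ran_{r,3Q_0}\lan g\ran_{s,Q_0}$ plus a sum over the pairwise-disjoint family $\mathscr{F}(Q_0)$ of the same kind of integrals over smaller cubes. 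Then apply (F-3) again to each $P\in\mathscr{F}(Q_0)$, and repeat.

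The key bookkeeping step is to define $\Sp$ as the union over all generations of the cubes appearing: let $\Sp_0=\{Q_0\}$, $\Sp_{k+1}=\bigcup_{Q\in\Sp_k}\mathscr{F}(Q)$, and $\Sp=\bigcup_k\Sp_k$. By (F-1) the cubes within each generation are pairwise disjoint, and by (F-2), $\sum_{P\in\mathscr{F}(Q)}|P|\le\frac12|Q|$, which is exactly the hypothesis of Proposition 3.1 (the augmentation/sparseness criterion) — so $\Sp$ is a sparse family; concretely one sets $E_Q = Q\setminus\bigcup_{P\in\mathscr{F}(Q)}P$, which by (F-2) has $|E_Q|\ge\frac12|Q|$, and the $E_Q$ are pairwise disjoint because a cube in a later generation is contained in exactly one cube of $\mathscr{F}(Q)$. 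Iterating (F-3) $N$ times and letting $N\to\infty$ (the remainder terms over generation-$N$ cubes tend to $0$ since $\sum_{Q\in\Sp_N}|Q|\le 2^{-N}|Q_0|$ and $T(f1_{3Q})g$ is, say, $L^1$-bounded on each piece once one notes $f\in L^\infty_c$, $g\in\sh$), one arrives at
\begin{eqnarray*}
|\lan Tf,g\ran| \le C\sum_{Q\in\Sp}|Q|^{1/\alpha}{\lan f\ran}_{r,3Q}{\lan g\ran}_{s,Q}.
\end{eqnarray*}

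It remains only to replace $3Q$ by $Q$ in the argument of $f$: here I would invoke the three-lattice theorem from \cite{L-N} exactly as in the proof of Theorem 1.1, writing $3Q$ as a subset of a cube $\widehat{Q}$ drawn from one of finitely many auxiliary dyadic lattices with $|\widehat{Q}|\sim|Q|$, so that ${\lan f\ran}_{r,3Q}\lesssim{\lan f\ran}_{r,\widehat{Q}}$ and the family $\{\widehat{Q}\}$ decomposes into $3^{2n}$ (dilated, hence still) sparse subfamilies. Relabelling $\widehat{Q}\mapsto Q$ and using $|Q|^{1/\alpha}=|Q|\cdot|Q|^{1/\alpha-1}$ — which is harmless since $|Q|$ is absorbed into the definition of $\Lambda^\alpha$ via the $\alpha$-th power — gives precisely $C\,{\Lambda}^{\alpha}_{\Sp,r,s}(f,g)$ after renaming the sparse family; the constant $C$ is the one furnished by (F-3), which in Proposition 3.1 will be taken to be a multiple of $\|T\|_{L^r\to L^{p,\infty}}+\|M_{T,s}\|_{L^r\to L^{p,\infty}}$.

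The main obstacle is verifying the termination/convergence of the iteration cleanly: one must check that the generation-$N$ remainder $\sum_{Q\in\Sp_N}\bigl|\int_Q T(f1_{3Q})g\bigr|$ genuinely vanishes as $N\to\infty$, which requires a quantitative bound on $\int_Q|T(f1_{3Q})g|$ in terms of $|Q|$ that is summable against the geometric decay $\sum|Q|\le 2^{-N}|Q_0|$ — this is where the hypotheses on $f\in L^\infty_c$ and $g\in\sh$ (rather than merely $L^1_{loc}$) are used, together with crude $L^r$-$L^{p,\infty}$ bounds on $T$ and $M_{T,s}$. Everything else is the standard stopping-time/sparse-domination bookkeeping, and the passage from $3Q$ to $Q$ is identical to the device already used in Section 2.2.
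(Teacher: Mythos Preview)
Your approach is essentially the same as the paper's: iterate (F-3) starting from an initial cube, collect the cubes into generations $\Sp_k$, verify sparseness via (F-1)--(F-2), and show the remainder at stage $N$ tends to zero using the geometric decay $\sum_{Q\in\Sp_N}|Q|\le 2^{-N}|Q_0|$. Your sparseness argument via $E_Q=Q\setminus\bigcup_{P\in\mathscr F(Q)}P$ is a direct variant of the paper's Carleson-family computation, and your explicit appeal to the three-lattice theorem for the passage $3Q\to Q$ is more careful than the paper, which leaves that step implicit.

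There is, however, one genuine gap in your globalization step. You write that once $3Q_0\supset\mathrm{supp}\,f$ one has $\lan Tf,g\ran$ ``essentially'' equal to $\int_{Q_0}T(f1_{3Q_0})g$. This is not correct: while $f1_{3Q_0}=f$ gives $T(f1_{3Q_0})=Tf$, the test function $g\in\sh$ is \emph{not} compactly supported, so $\int_{Q_0}Tf\cdot g$ misses the contribution of $Tf\cdot g$ outside $Q_0$, and there is no reason this tail is small for a general linear operator $T$. The paper's fix is to take a pairwise disjoint family $\{Q_j\}_{j\ge0}$ of cubes whose union is all of $\Rn$ and such that each $3Q_j$ contains $\mathrm{supp}\,f$; then $\lan Tf,g\ran=\sum_j\int_{Q_j}T(f1_{3Q_j})g$ exactly, one runs the iteration inside each $Q_j$ to obtain a sparse family $\Sp(Q_j)$, and finally sets $\Sp=\bigcup_j\Sp(Q_j)$, which is still sparse. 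With this correction your argument goes through.
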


\begin{proof}
Pick up a cube $Q_0$ in $\Rn$ containing supports of $f$. Then, we construct $ {\{ {\mathcal{F} }_k \}}_{k=0,1,2, \cdots}$ by
\begin{eqnarray*}
{\mathcal{F} }_0=\{ Q_0 \} \ \ \ , \ \ \ {\mathcal{F}}_{k+1} = \bigcup_{P \in {\mathcal{F}}_k } {\mathcal{F} }_P,
\end{eqnarray*}
and set ${\mathcal{S}}_k(Q_0): =\Sp_k := \bigcup^{k}_{i=0} {\mathcal{F}}_i$, $ \Sp(Q_0):= \mathcal{S} = \bigcup_k {\mathcal{S}}_k$. From the assumption (F-1), ${\mathcal{F}}_k$ be a pairwise disjoint family. The assumption (F-3) gives 
\begin{eqnarray*}
\left| \int_{Q_0} T(f 1_{3Q_0}) g dx \right| &\le&  C \sum_{P \in {\mathcal{S}}_k } {|P|}^{1/\alpha} {\langle f \rangle}_{r,3P} {\langle g \rangle}_{s,P} +\sum_{ P\in {\mathcal{F}}_{k+1} } \left| \int_P T(f 1_{3P}) g dx \right|\\
&\le& C \sum_{P \in {\mathcal{S}} } {|P|}^{1/\alpha} {\langle f \rangle}_{r,3P} {\langle g \rangle}_{s,P} +\sum_{ P\in {\mathcal{F}}_{k+1} } \left| \int_P T(f 1_{3P}) g dx \right|
\end{eqnarray*}
for any $k \in \N$. From
\begin{eqnarray*}
\sum_{P \in {\mathcal{F}}_{k+1} } |P| \le \sum_{L \in {\mathcal{F}}_k } \sum_{ P \in {\mathcal{F}}_L } |P| \le \frac{1}{2}\sum_{L \in {\mathcal{F}}_k } |L| \le \cdots \le 2^{-k-1}|Q_0|,
\end{eqnarray*}
we have
\begin{eqnarray*}
\sum_{ P\in {\mathcal{F}}_{k+1} } \left| \int_P T(f 1_{3P}) g dx \right| \to 0 \ \ as \ k \to \infty .
\end{eqnarray*}
Therefore, one obtains
\begin{eqnarray*}
\left| \int_{Q_0} T(f 1_{3{Q_0}}) g dx \right| \le C    {\Lambda}^{\alpha}_{\mathcal{S} , r,s}(f,g).
\end{eqnarray*}
We prove the sparseness of $\mathcal{S}$. Let $Q$ be an any dyadic child of $Q_0$. For any $k$, we have
\begin{eqnarray*}
\sum_{ P \in {\mathcal{F}}_{k+1} , P \subset Q} |P| &\le&  \sum_{ L \in {\mathcal{F}}_k } \sum_{\substack{P \in {\mathcal{F}}_L \\ P \subset Q}}|P| \\ 
&\le&\sum_{ \substack{L \in {\mathcal{F}}_k \\ L \subset Q}} \sum_{\substack{P \in {\mathcal{F}}_L \\ P \subset Q}}|P| + \sum_{\substack{ L \in {\mathcal{F}}_k \\ L \supset Q }} \sum_{\substack{P \in {\mathcal{F}}_L \\ P \subset Q}}|P|\\
&\le& \frac{1}{2} \sum_{L \in {\mathcal{F}}_k,L \subset Q} |L| + \sum_{\substack{ L \in {\mathcal{F}}_k \\ L \supset Q }} \sum_{\substack{P \in {\mathcal{F}}_L \\ P \subset Q}}|P|\\
&=:& a_k+b_k.
\end{eqnarray*}
Here, if $b_k \neq 0$ for some $k$, it holds that $b_i=0$ for any $i>k$. Actually, $b_k \neq 0$ means that there are $L \in {\mathcal{F}}_k$ and $P \in {\mathcal{F}}_L \subset {\mathcal{F}}_{k+1}$ so that $L\supset Q$ and $P\subset Q$. From the pairwise  disjointness of ${\mathcal{F}}_{k+1}$, any cube in $\bigcup_{i>k} {\mathcal{F}}_{i}$ do not contain $Q$. Hence, we have $b_i=0$ with $i>k$, and 
\begin{eqnarray*}
\sum_{k\ge 0} b_k \le |Q|.
\end{eqnarray*}
From these results, one has
\begin{eqnarray*}
\sum_{k \ge 0} a_k &\le& \frac{1}{2} \sum_{k \ge 0} a_k + |Q| \\
\sum_{k \ge 0} a_k &\le& 2|Q|,
\end{eqnarray*}
which means $\Sp$ be a Carleson family, and therefore $\Sp$ be a sparse family. To complete the proof, we take the pairwise disjoint family of cubes ${\{Q_j\}}_{j=0,1,2\cdots}$ so that any $3Q_j$ contain the support of $f$ and the union of $Q_j$ coincides $\Rn$. Then, $\Sp := \cup^{\infty}_{j=0} \Sp(Q_j)$ be a sparse family, and we obtain the desired sparse form bound.

\end{proof}
Let us prove Proposition \ref{sp}.

\begin{proof}

For any cube $Q$ in $\Rn$  and $\lambda >0$, set
\begin{eqnarray*}
E=\{ x \in Q \ ; \ T(f1_{3Q}) > \lambda {|Q|}^{1/\alpha -1} {\langle f \rangle}_{r,3Q} \} \cup \{ x \in Q  \ ; \ M_{T,s} (f1_{3Q}) > \lambda {|Q|}^{1/\alpha -1} {\langle f \rangle}_{r,3Q} \}.
\end{eqnarray*}
From weak-type boundedness of  $T$ and $M_{T,s}$, we obtain
\begin{eqnarray*}
{| \{ x \in Q  \ ; \ T (f1_{3Q}) > \lambda{|Q |}^{1/\alpha -1}  {\langle f \rangle}_{r,3Q} \}|}^{1/p} &\le& {\lambda }^{-1}  {|Q|}^{1-1/\alpha } {\langle f \rangle}^{-1}_{r,3Q} {||T||}_{L^r \to L^{p,\infty} }{||f||}_{L^r(3Q)} \\
&\lesssim&  {\lambda }^{-1} {|Q|}^{1/p} ,
\end{eqnarray*}
and
\begin{eqnarray*}
{| \{ x \in Q  \ ; \ M_{T,s}  (f1_{3Q}) > \lambda {|Q|}^{1/\alpha - 1} {\langle f \rangle}_{r,3Q} \}|}^{1/q} &\le& {\lambda }^{-1} {|Q|}^{-1/\alpha +1} {\langle f \rangle}^{-1}_{r,3Q}{||M^{\gamma}_{T,s} ||}_{L^r \to L^{p,\infty} }{||f||}_{L^r(3Q)} \\
&\lesssim&  {\lambda }^{-1} {|Q|}^{1/p} .
\end{eqnarray*}
We  apply the Calderon-Zygmund decomposition to $1_E$ to construct the family ${\{ P_j \}}_j $ of pairwise disjoint dyadic child of $Q$ so that
\begin{eqnarray*}
\left\{
\begin{array}{ll}
2^{-n-1}|P_j|< |P_j \cap E| \le 2^{-1}|P_j| ,\\
|E \setminus P|=0 ,
\end{array}
\right.
\end{eqnarray*}
where $P=\bigcup P_j$. Here, the pairwise disjointness of ${\{P_j\}}_j$ gives 
\begin{eqnarray*}
\left| \int_{Q} T(f1_{3Q})g dx \right| &\le& \left| \int_{Q \setminus P} T(f1_{3Q})g dx \right| +  \sum_j \left| \int_{P_j} T(f1_{3Q\setminus 3P_j})g dx \right| + \sum_j \left| \int_{P_j} T(f1_{3P_j})g dx \right| \\
&=:& I_1 +I_2 +I_3.
\end{eqnarray*}
Since $|E \setminus P|=0$, one obtains
\begin{eqnarray*}
I_1 \le \int_{Q \setminus E} |T(f1_{3Q})||g| dx \lesssim \lambda {|Q|}^{1/\alpha -1}{\lan f \ran}_{r,3Q}\int_{Q} |g| \le  \lambda |Q|^{1/\alpha} {\langle f \rangle}_{r,3Q} {\langle g \rangle}_{s',Q} .
\end{eqnarray*}
On the other hands,
\begin{eqnarray*}
I_2 &\le& \sum_j {||T(f1_{3Q  \setminus 3P_j}) ||}_{L^s(P_j)} {||g||}_{L^{s'}(P_j)} \\
&\le& { \left( \sum_j {||T(f1_{3Q  \setminus 3P_j}) ||}^s_{L^s(P_j)} \right)}^{1/s} {||g||}_{L^{s'}(Q)} \\
&\lesssim&  { \left( \sum_j {|P_j|} \right)}^{1/s} {|Q|}^{1/\alpha - 1} {\lan f \ran}_{r,3Q} {||g||}_{L^{s'}(Q)} \\
&\le& \lambda |Q|^{1/\alpha} {\langle f \rangle}_{r,3Q} {\langle g \rangle}_{s',Q}.
\end{eqnarray*}
From these results with sufficient large $ \lambda \sim  ( {||T||}_{L^r \to L^{p,\infty} } + {|| M_{T,s}||}_{L^r \to L^{p,\infty}  }) $ and Lemma \ref{splem}, we obtain $\sum |P_j| < 2^{-1} |Q|$ and complete the proof.
\end{proof}

\begin{rem}
From Lebesgue's differentiation theorem, we obtain
\begin{eqnarray*}
|Tf(x)| &=& \lim_{  \substack{|Q| \to 0 \\ Q \ni x}   }{\left( \frac{1}{|Q|} \int_{Q} { |Tf|}^{s} \right) }^{1/s}  \\
&\le& M_{T,s}f(x) + \liminf_{  \substack{|Q| \to 0 \\ Q \ni x}   }{\left( \frac{1}{|Q|} \int_{Q} { |T(f1_{(3Q)})|}^{s} \right) }^{1/s}.
\end{eqnarray*}
If $T$ is a bounded operator from $L^{s -\varepsilon}$ to $L^s$ with some $\varepsilon >0$, then we have
\begin{eqnarray*}
\liminf_{  \substack{|Q| \to 0 \\ Q \ni x}   }{\left( \frac{1}{|Q|} \int_{Q} { |T(f1_{(3Q)})|}^{s} \right) }^{1/s} &\lesssim& \liminf_{  \substack{|Q| \to 0 \\ Q \ni x}   }  \frac{1}{{|Q|}^{1/s}} { \left( \int_{3Q} { |f |}^{s-\varepsilon} \right) }^{1/(s-\varepsilon)}\\
&\lesssim&\liminf_{  \substack{|Q| \to 0 \\ Q \ni x}   }  {|Q|}^{1/(s-\varepsilon) -1/s} { \left(\frac{1}{|3Q|} \int_{3Q} { |f |}^{s-\varepsilon} \right) }^{1/(s-\varepsilon)}\\
&=&0.
\end{eqnarray*}
Hence, we have $|Tf(x)| \le M_{T,s} f(x)$ and ${||T ||}_{L^r \to L^{p,\infty}} \le {||M_{T,s} ||}_{ L^r \to L^{p,\infty}}$.

\end{rem}
The Proposition \ref{sp} gives some interpolation theorem.
\begin{cor}
\label{inter1}
Let $1 \le r \le s_0 ,  s_1,p_0,p_1 \le \infty$. We assume linear operator $T$ satisfies
\begin{eqnarray*}
{||M_{T,s_0} f||}_{L^{p_0, \infty }} &\le& C_0 {||f||}_{L^r}, \\
{||M_{T,s_1} f||}_{L^{p_1,\infty} }  &\le& C_1 {||f||}_{L^r}.
\end{eqnarray*}
Then, for any $\theta \in (0,1)$, we have
\begin{eqnarray*}
{||M_{T,s}||}_{L^r \to L^{p,\infty} }\lesssim   {C_0}^{1- \theta} {C_1}^{\theta} 
\end{eqnarray*}
where $1/s =(1-\theta)/s_0 + \theta /s_1$ and $1/p=(1-\theta)/p_0+\theta/p_1$. In particularly, we have
\begin{eqnarray*}
|\lan Tf ,g \ran | \lesssim  ({||T||}_{L^r \to L^{p,\infty}} + {C_0}^{1- \theta} {C_1}^{\theta} ) \Lambda^{\alpha}_{ \Sp ,r ,s'}(f,g),
\end{eqnarray*}
where 
\begin{eqnarray*}
\frac{1}{\alpha} = \frac{1}{r} +\frac{1}{p'}.
\end{eqnarray*}
\end{cor}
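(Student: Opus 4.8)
The plan is to prove Corollary~\ref{inter1} in two stages: first the interpolation bound for the maximal operator $M_{T,s}$, then the sparse form bound via Proposition~\ref{sp}. For the first stage, I would fix a cube $Q$ and note that, by definition, $M_{T,s_0}$ and $M_{T,s_1}$ are obtained from the single sublinear operator $f \mapsto T(f1_{(3Q)^c})$ by taking, respectively, the $L^{s_0}(Q)$- and $L^{s_1}(Q)$-averages and then a supremum over $Q \ni x$. The key observation is that for a \emph{fixed} cube $Q$, log-convexity of $L^p$ norms on the finite-measure space $Q$ gives
\begin{eqnarray*}
{|Q|}^{-1/s}{||T(f1_{(3Q)^c})||}_{L^s(Q)} \le \left( {|Q|}^{-1/s_0}{||T(f1_{(3Q)^c})||}_{L^{s_0}(Q)} \right)^{1-\theta} \left( {|Q|}^{-1/s_1}{||T(f1_{(3Q)^c})||}_{L^{s_1}(Q)} \right)^{\theta}
\end{eqnarray*}
whenever $1/s=(1-\theta)/s_0+\theta/s_1$; taking the supremum over $Q \ni x$ yields the pointwise bound $M_{T,s}f(x) \le (M_{T,s_0}f(x))^{1-\theta}(M_{T,s_1}f(x))^{\theta}$. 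From here the weak-type estimate ${||M_{T,s}f||}_{L^{p,\infty}} \lesssim C_0^{1-\theta}C_1^{\theta}{||f||}_{L^r}$ with $1/p=(1-\theta)/p_0+\theta/p_1$ follows by the standard trick: $\{M_{T,s}f>\lambda\} \subset \{M_{T,s_0}f>\lambda^{1-?}\,\cdots\}$ — more precisely $\{M_{T,s}f>\lambda\}\subset\{M_{T,s_0}f > \lambda t\}\cup\{M_{T,s_1}f>\lambda t^{-(1-\theta)/\theta}\}$ for a free parameter $t$, and optimizing $t$ (this is exactly how one interpolates weak-type bounds for a product of two quantities, as in the proof that $M$ maps $L^p$ weakly).

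For the second stage, I would simply apply Proposition~\ref{sp}. The hypotheses there require weak-type $(r,p)$ bounds for both $T$ and $M_{T,s}$ with $1/p = 1/r - 1/\alpha + 1 = 1/r + 1/p'$, i.e. precisely the relation in the statement. The bound for $M_{T,s}$ is what we just established, $C_0^{1-\theta}C_1^{\theta}$; the bound for $T$ itself is $ {||T||}_{L^r\to L^{p,\infty}}$, which by Remark~\ref{...} (the remark following Proposition~\ref{sp}) is in fact dominated by the $M_{T,s}$ norm under a mild hypothesis, but in any case appears explicitly in the conclusion of Corollary~\ref{inter1}, so we need not absorb it. Plugging these two constants into the conclusion of Proposition~\ref{sp} gives
\begin{eqnarray*}
|\langle Tf,g\rangle| \lesssim \left( {||T||}_{L^r\to L^{p,\infty}} + C_0^{1-\theta}C_1^{\theta}\right)\Lambda^{\alpha}_{\mathcal{S},r,s'}(f,g),
\end{eqnarray*}
which is exactly the second displayed inequality of the corollary.

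The main obstacle, such as it is, is purely bookkeeping: one must be careful that the interpolation in the first stage is genuinely \emph{pointwise} (it is, because the inner $L^{s_i}(Q)$ norms are on the same fixed finite-measure set before the supremum is taken, so no Marcinkiewicz-type real interpolation on the operator is needed — only Lyapunov/log-convexity of norms), and that the exponent relations $1/s$, $1/p$, $1/\alpha$ are all mutually consistent with what Proposition~\ref{sp} demands. A small point to check is the passage from the pointwise product bound to the weak-$L^p$ bound: this uses that if $u,v \ge 0$ satisfy ${||u||}_{L^{p_0,\infty}}\le A_0$, ${||v||}_{L^{p_1,\infty}}\le A_1$ then ${||u^{1-\theta}v^{\theta}||}_{L^{p,\infty}}\lesssim A_0^{1-\theta}A_1^{\theta}$ — elementary but worth one line. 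Everything else is a direct citation of the already-proved Proposition~\ref{sp}.
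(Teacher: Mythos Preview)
Your proposal is correct and follows the same two-stage architecture as the paper: establish the pointwise bound $M_{T,s}f(x) \le (M_{T,s_0}f(x))^{1-\theta}(M_{T,s_1}f(x))^{\theta}$, deduce the weak-type estimate by H\"older for weak $L^p$, then invoke Proposition~\ref{sp}. The only difference is in how the pointwise bound is obtained: the paper sets up an analytic family $F(z)=\int_Q T(f1_{(3Q)^c}) g_z$ with $g_z=\mathrm{sgn}(g)|g|^{s'\{(1-z)/s_0'+z/s_1'\}}$ and applies Hadamard's three lines lemma, whereas you use the log-convexity (Lyapunov) inequality $\|h\|_{L^s(Q)}\le \|h\|_{L^{s_0}(Q)}^{1-\theta}\|h\|_{L^{s_1}(Q)}^{\theta}$ directly. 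Since only the output exponent is being interpolated here (the input stays in $L^r$), the three lines argument reduces exactly to log-convexity, so your route is a modest simplification of the paper's; the paper's formulation, on the other hand, makes the parallel with Corollary~\ref{inter2} (where both input and output exponents move and a genuine complex-interpolation argument is needed) more transparent.
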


\begin{proof}
Let $Q$ and $x\in Q$. For any simple functions $f,g $ so that ${||g||}_{s'}=1$, we define the analytic function $F$ on the open strip by
\begin{eqnarray*}
F(z)=\int_Q T(f1_{{(3Q)}^c } )(x) g_z(x) dx ,
\end{eqnarray*}
where
\begin{eqnarray*}
g_z=sgn(g) {|g|}^{s'\{ (1-z)/s_0'+z/s_1'\} }.
\end{eqnarray*}
Then, it holds that
\begin{eqnarray*}
|F(iy)| &\le& \int_Q |T(f1_{ {(3Q)}^c})| {|g|}^{s'/s_0'} \\
&\le& {||T(f 1_{ {(3Q)}^c } )||}_{L^{s_0} (Q)}\\
&\le& {|Q|}^{1/s_0} M_{T,s_0}f(x).
\end{eqnarray*}
On the other hands, one has
\begin{eqnarray*}
|F(1+iy)| &\le& \int_Q |T(f1_{ {(3Q)}^c})| {|g|}^{s'/s_1'} \\
&\le& {||T(f 1_{ {(3Q)}^c } )||}_{L^{s_1}(Q)}\\
&\le& {|Q|}^{1/ s_1} M_{T,s_1} f(x). 
\end{eqnarray*}
By using Hadamard's three lines lemma, we have
\begin{eqnarray*}
|F(\theta)| &\le& {|Q|}^{ (1-\theta)/s_0+\theta/s_1} {M_{T,s_0} f (x) }^{1-\theta} {M_{T,s_1}f(x)}^{\theta}\\
&=& {|Q|}^{ 1/s } {M_{T,s_0} f (x) }^{1-\theta}  {M_{T,s_1}f(x)}^{\theta} , \\
\end{eqnarray*}
which yields
\begin{eqnarray*}
M_{T,s} f(x) \le  {M_{T,s_0}f (x) }^{1-\theta}  {M_{T,s_1}f(x)}^{\theta}.
\end{eqnarray*}
By H\"{o}lder's inequality, we have
\begin{eqnarray*}
{||{(M_{T,s_0}  f)}^{1-\theta}  {(M_{T,s_1}f)}^{\theta} ||}_{L^{p, \infty} } &\lesssim& {||M_{T,s_0} f||}^{1-\theta}_{L^{p_0,\infty} } {||M^{\gamma}_{T,s_1} f||}^{\theta}_{L^{p_1,\infty} } \\
&\le& {C_0}^{1-\theta} {C_1}^{\theta} {||f||}_{L^r}
\end{eqnarray*}
for $1/p=(1-\theta)/p_0+ \theta /p_1$. By this and Proposition \ref{sp}, we have $ {||M^{\gamma}_{T,s}||}_{L^r \to L^{p,\infty} }\lesssim   {C_0}^{1- \theta} {C_1}^{\theta} $ and the desired sparse form bounds for $T$.
\end{proof}

\begin{cor}
\label{inter2}
Let $1 \le r_0 ,r_1\le s_0 ,  s_1,p_0,p_1 \le \infty$. We assume linear operator $T$ satisfies
\begin{eqnarray*}
{||M_{T,s_0} f||}_{L^{p_0, \infty }} &\le& C_0 {||f||}_{L^{r_0}} ,\\
{||M_{T,s_1} f||}_{L^{p_1,\infty} }  &\le& C_1 {||f||}_{L^{r_1}}.
\end{eqnarray*}
and
\begin{eqnarray*}
|Tf(x)| \le T(|f|)(x)  \ \ a.e. \ x \in \Rn .
\end{eqnarray*}
Then, for any $\theta \in (0,1)$, we have
\begin{eqnarray*}
{||M_{T,s}||}_{L^r \to L^{p,\infty} }\lesssim   {C_0}^{1- \theta} {C_1}^{\theta} ,
\end{eqnarray*}
where $1/s =(1-\theta)/s_0 + \theta /s_1$ and $1/q=(1-\theta)/p_0+\theta/p_1$. In particularly, we have
\begin{eqnarray*}
|\lan Tf ,g \ran | \lesssim  ({||T||}_{L^r \to L^{p,\infty}} + {C_0}^{1- \theta} {C_1}^{\theta} ) \Lambda^{\alpha}_{ \Sp ,r ,s'}(f,g),
\end{eqnarray*}
where
\begin{eqnarray*}
\frac{1}{\alpha} = \frac{1}{r} +\frac{1}{p'}.
\end{eqnarray*}
\end{cor}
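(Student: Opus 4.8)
The plan is to repeat the Hadamard three-lines argument used for Corollary \ref{inter1}, but to also let the \emph{input} function vary inside the analytic family; the hypothesis $|Tf(x)|\le T(|f|)(x)$ is precisely what makes this admissible. Throughout put $1/r=(1-\theta)/r_0+\theta/r_1$, so that $r\bigl((1-\theta)/r_0+\theta/r_1\bigr)=1$. First, since $|T(f1_{(3Q)^c})(y)|\le T(|f|1_{(3Q)^c})(y)$ and $\|\cdot\|_{L^{s}(Q)}$ is monotone, one has $M_{T,s}f\le M_{T,s}(|f|)$ and likewise for $M_{T,s_0},M_{T,s_1}$; as $\||f|\|_{L^r}=\|f\|_{L^r}$ this reduces everything to $f\ge 0$. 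The same domination applied to nonnegative inputs shows $T$ preserves nonnegativity, hence $\|T(f1_{(3Q)^c})\|_{L^{s}(Q)}=\sup\int_Q T(f1_{(3Q)^c})\,g\,dy$, the supremum taken over nonnegative simple $g$ supported in $Q$ with $\|g\|_{L^{s'}(Q)}=1$.

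Next I would establish a pointwise bound. Fix a cube $Q$, a point $x\in Q$, and an admissible $g$ as above. On the strip $0\le\mathrm{Re}\,z\le1$ set $f_z=f^{\,r((1-z)/r_0+z/r_1)}$, $g_z=g^{\,s'((1-z)/s_0'+z/s_1')}$, and $F(z)=\int_Q T(f_z1_{(3Q)^c})(y)\,g_z(y)\,dy$; since $f,g$ are simple and $T$ is linear, $F$ is analytic in the open strip and bounded and continuous on its closure. On $\mathrm{Re}\,z=0$, using $|f_{iy}|=f^{r/r_0}$ (independent of $y$) and $|T(f_{iy}1_{(3Q)^c})|\le T(f^{r/r_0}1_{(3Q)^c})$, H\"older on $Q$ with exponents $s_0,s_0'$ gives $|F(iy)|\le\|T(f^{r/r_0}1_{(3Q)^c})\|_{L^{s_0}(Q)}\le|Q|^{1/s_0}M_{T,s_0}(f^{r/r_0})(x)$ (recall $\|g_{iy}\|_{L^{s_0'}(Q)}=1$ and $x\in Q$), and symmetrically $|F(1+iy)|\le|Q|^{1/s_1}M_{T,s_1}(f^{r/r_1})(x)$. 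By Hadamard's three-lines lemma $|F(\theta)|\le|Q|^{1/s}\bigl(M_{T,s_0}(f^{r/r_0})(x)\bigr)^{1-\theta}\bigl(M_{T,s_1}(f^{r/r_1})(x)\bigr)^{\theta}$, and since $F(\theta)=\int_Q T(f1_{(3Q)^c})g\,dy$, taking the supremum over $g$ and then over all $Q\ni x$ yields
\[
M_{T,s}f(x)\le\bigl(M_{T,s_0}(f^{r/r_0})(x)\bigr)^{1-\theta}\bigl(M_{T,s_1}(f^{r/r_1})(x)\bigr)^{\theta}.
\]

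From here the conclusion is short. Because $(1-\theta)/p_0+\theta/p_1=1/p$, H\"older's inequality in the weak Lebesgue spaces together with the two hypotheses gives
\[
\|M_{T,s}f\|_{L^{p,\infty}}\le\|M_{T,s_0}(f^{r/r_0})\|_{L^{p_0,\infty}}^{1-\theta}\,\|M_{T,s_1}(f^{r/r_1})\|_{L^{p_1,\infty}}^{\theta}\le\bigl(C_0\|f\|_{L^r}^{r/r_0}\bigr)^{1-\theta}\bigl(C_1\|f\|_{L^r}^{r/r_1}\bigr)^{\theta}=C_0^{1-\theta}C_1^{\theta}\|f\|_{L^r},
\]
which is the claimed bound for $\|M_{T,s}\|_{L^r\to L^{p,\infty}}$. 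For the last statement, $1/\alpha=1/r+1/p'$ is the same as $1/p=1/r-1/\alpha+1$, so Proposition \ref{sp} applies with this $p$: weak-type $(r,p)$ of $M_{T,s}$ was just proved and that of $T$ enters through the factor $\|T\|_{L^r\to L^{p,\infty}}$ (or via the Remark following Proposition \ref{sp} when $T$ is bounded $L^{s-\varepsilon}\to L^s$), whence $|\langle Tf,g\rangle|\lesssim\bigl(\|T\|_{L^r\to L^{p,\infty}}+C_0^{1-\theta}C_1^{\theta}\bigr)\Lambda^{\alpha}_{\Sp,r,s'}(f,g)$.

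The one genuinely load-bearing point is that $f_z$ may be allowed to vary in the second step, which works because $|f_{iy}|=f^{r/r_0}$ is $y$-independent and $|T(f_{iy}1_{(3Q)^c})|\le T(f^{r/r_0}1_{(3Q)^c})$; everything else is bookkeeping — the analyticity and admissible growth of $F$, handled by first taking $f,g$ simple and then approximating, and the arithmetic of the conjugate exponents $s_0',s_1',s'$ and the powers of $|Q|$. Degenerate cases in which some of $r_i,s_i,p_i$ equal $1$ or $\infty$ (so that a duality step or the weak-$L^p$ arithmetic reads differently) are treated by the same scheme.
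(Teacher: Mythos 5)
Your proof is correct and follows essentially the same route as the paper: the Stein-interpolation-style analytic family $f_z,g_z$ on a fixed cube, the hypothesis $|Tf|\le T(|f|)$ to control the boundary lines by $M_{T,s_0}(|f|^{r/r_0})$ and $M_{T,s_1}(|f|^{r/r_1})$, the three-lines lemma giving the pointwise bound on $M_{T,s}f$, weak-type H\"older, and then Proposition \ref{sp}. Your additions (making explicit that $1/r=(1-\theta)/r_0+\theta/r_1$, the reduction to $f\ge 0$, and the closing appeal to Proposition \ref{sp}) only fill in details the paper leaves implicit.
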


\begin{proof}
The proof is similar to that of Corollary \ref{inter1}. Let $Q$ and $x\in Q$. For any simple functions $f,g $ so that ${||g||}_{s'}=1$, we define the analytic function $F$ on the open strip by
\begin{eqnarray*}
F(z)=\int_Q T(f_z 1_{{(3Q)}^c } )(x) g_z(x) dx,
\end{eqnarray*}
where
\begin{eqnarray*}
f_z&=& sgn(f) {|f|  }^{ r \{ (1-z)/r_0+z/r_1 \} } , \\
g_z&=&sgn(g) {|g|}^{s'\{ (1-z)/s_0'+z/s_1'\} }.
\end{eqnarray*}
From $|Tf | \le T(|f|)$, we have
\begin{eqnarray*}
|F(iy)| &\le& \int_Q |T(f1_{ {(3Q)}^c})| {|g|}^{s'/s_0'} \\
&\le& {||T(f_z 1_{ {(3Q)}^c } )||}_{L^{s_0} (Q)}\\
&\le& {|Q|}^{1/s_0} M_{T,s_0}({|f|}^{r/r_0} )(x),
\end{eqnarray*}
and
\begin{eqnarray*}
|F(1+iy)| &\le& \int_Q |T(f1_{ {(3Q)}^c})| {|g|}^{s'/s_1'} \\
&\le& {||T(f_z 1_{ {(3Q)}^c } )||}_{L^{s_1}(Q)}\\
&\le& {|Q|}^{1/s_1} M_{T,s_1} ({|f|}^{r/r_1} )(x). 
\end{eqnarray*}
Hence, one obtains
\begin{eqnarray*}
M_{T,s} f(x) \le {M_{T,s_0}( {|f|}^{r/r_0} )(x) }^{1-\theta}  {M_{T,s_1}({|f|}^{r/r_1} )(x)}^{\theta}. \\
\end{eqnarray*}
By using H\"{o}lder's inequality, we have
\begin{eqnarray*}
{||{(M_{T,s_0} ({|f|}^{r/r_0} ))}^{1-\theta}  {(M_{T,s_1}({|f|}^{r/r_1} ))}^{\theta} ||}_{L^{p, \infty} } &\lesssim& {||M_{T,s_0}({|f|}^{r/r_0}) ||}^{1-\theta}_{L^{p_0,\infty} } {||M_{T,s_1}( {|f|}^{r/r_1}) ||}^{\theta}_{L^{p_1,\infty} } \\
&\le& {C_0}^{1-\theta} {C_1}^{\theta} {||f||}_{L^r}.
\end{eqnarray*}

\end{proof}
We give a proof of Theorem \ref{sparseform1}.
\begin{proof}
We recall the dyadic decomposition in subsection 2.1. Since $ \phi_j *f= (\phi_{j-1} +\phi_j +\phi_{j+1}) * \phi_j *f$, we have
\begin{eqnarray*}
 |\lan a(x,D)f,g \ran| &=&\sum_{j \ge0} \sum^{j+1}_{i=j-1} |\lan a_i(x,D)(\phi_j*f),g \ran| \\
&=& \sum_{j \ge0} \sum^{j+1}_{i=j-1} |\lan a_i(x,D)(\lim_{R\to \infty } 1_{Q_R}\phi_j*f),g \ran| \\
&\le& \liminf_{R\to \infty} \sum_{j \ge0} \sum^{j+1}_{i=j-1} |\lan a_i(x,D)((\phi_j*f) 1_{Q_R} ),g \ran| .
\end{eqnarray*}
Therefore, it is enough to prove
\begin{eqnarray*}
| \lan a_j(x,D) f ,g \ran | \lesssim 2^{j \kappa_1} \Lambda^{\alpha}_{\Sp_j, 2,s'}(f,g)
\end{eqnarray*}
for any $f \in L^{\infty}_{c}$ and $ g \in \sh$. For any $x, z \in Q$ and $\gamma \in [0, 1)$, we integrate by parts $N \in \N$ times to obtain
\begin{eqnarray*}
 |a_j(x,D)(f1_{{(3Q)}^c } )(z)| &\lesssim& 2^{jm+jn/2} { \left\{  \int_{{(3Q)}^c}  {(1+2^{2j\rho N }{ |z-y|}^{2N})}^{-2} {|f(y)|}^2 dy \right\} }^{1/2} \\
&\lesssim& 2^{jm+jn/2} { \left\{  \int {(1+2^{2j\rho N }{ |x-y|}^{2N})}^{-2} {|f(y)|}^2 dy \right\} }^{1/2} \\
&\lesssim& 2^{jm+jn/2} \sum_{k \in \mathbb{Z} }  { \left\{  \int_{ |x-y| \sim 2^{-j \rho } 2^k} {(1+ 2^{2kN})}^{-2} {|f(y)|}^2 dy \right\} }^{1/2} \\
&\lesssim&2^{jm+jn(1-\rho(1-\gamma))/2} {M^{\gamma}({|f|}^2)(x)}^{1/2},
\end{eqnarray*}
where $M^{\gamma}h(x):= \sup_{Q\in x} {|Q|}^{\gamma } {\lan h\ran}_Q$. Hence, we obtain
\begin{eqnarray*}
 M_{a_j(x,D) ,\infty }\lesssim 2^{jm+jn(1-\rho(1-\gamma))/2} {M^{\gamma}({|f|}^2)(x)}^{1/2}.
\end{eqnarray*}
By weak-type boundedness of $M^{\gamma}$, for any $p_0 \ge 2$, one has
\begin{eqnarray*}
 {|| M_{a_j(x,D) ,\infty } ||}_{L^{p_0,\infty}}  \lesssim 2^{jm+jn(1-\rho)/2 +j\rho n(1/2 -1/p_0)}  {||f||}_{L^2}
\end{eqnarray*}
by taking $\gamma =1- 2/p_0$. On the other hands, we have
\begin{eqnarray*}
{||a_j(x,D)f||}_{L^{p_1}} \lesssim 2^{jm +jn(1/2 -1/p_1)} {||f||}_{L^2}
\end{eqnarray*}
for any $p_1 \ge2$. From this and
\begin{eqnarray*}
M_{a_j(x,D) , p_1}f (x) \lesssim M_{p_1}(a_j(x,D)f)(x) + 2^{jm +jn(1/2-1/p_1)}{M^{1-2/p_1} ({|f|}^2) (x)}^{1/2},
\end{eqnarray*}
we obtain
\begin{eqnarray*}
{||M_{a_j(x,D) , p_1}f ||}_{L^{p_1,\infty}} \lesssim  2^{jm +jn(1/2-1/p_1)} {||f||}_{L^2}.
\end{eqnarray*}
Therefore, Corollary \ref{inter1} gives 
\begin{eqnarray*}
 |\lan a_j(x,D)f,g \ran| \lesssim 2^{jm} 2^{jn(1-\theta)(1-\rho)/2+j\rho n(1-\theta)(1/2-1/p_0) } 2^{jn\theta(1/2-1/p_1)  }\Lambda^{\alpha}_{ \Sp ,2 ,s'}(f,g),
\end{eqnarray*}
with $1/s=\theta/p_1$ and $1/\alpha= 1/2-(1-\theta)/p_0-\theta/p_1 +1<3/2$. By simple calculation as following,
\begin{eqnarray*}
(1-\theta)(1-\rho)/2 +\rho(1-\theta)(1/2 -1/p_0) + \theta(1/2-1/p_1)&=&-\rho(1-\theta)/p_0 +1/2-\theta/p_1 \\
&=&-\rho(1-1/\alpha +1/2-1/s) +1/2-1/s \\
&=&(1-\rho)(1/2-1/s)+\rho (1/\alpha-1),
\end{eqnarray*}
we have the desired sparse bounds.
\end{proof}
To establish Theorem \ref{sparseform2} by the interpolation argument as Corollary \ref{inter2}, we need the condition $|a(x,D)f| \le a(x,D)(|f|)$. Unfortunately, it fails in general and we need the following alternative argument:

\begin{lem}
\label{inter3}
Let $0 \le \gamma <1$. We assume linear operator $T$ satisfies
\begin{eqnarray*}
{||T||}_{L^2 \to L^2} &\le& C_0 , \\
M_{T,\infty} f(x)  &\le& C_1 {M^{\gamma} f(x)} \ \ a.e. \ x \in \Rn.
\end{eqnarray*}
Then, for any $\theta \in (0,1)$, we have
\begin{eqnarray*}
{||M_{T,r'}||}_{L^r \to L^{p,\infty} }\lesssim   {C_0}^{1- \theta} {C_1}^{\theta} ,
\end{eqnarray*}
where $1/r =(1-\theta)/2 + \theta $ and $1/p=(1-\gamma)\theta +1/r'$. In particularly, we have
\begin{eqnarray*}
|\lan Tf ,g \ran | \lesssim  ({||T||}_{L^r \to L^{p,\infty}} + {C_0}^{1- \theta} {C_1}^{\theta} ) \Lambda^{\alpha}_{ \Sp ,r ,r}(f,g),
\end{eqnarray*}
where
\begin{eqnarray*}
\frac{1}{\alpha} = 1+\left(\frac{2}{r}-1\right) \gamma.
\end{eqnarray*}
\end{lem}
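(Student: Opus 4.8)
The plan is to deduce the sparse form bound from Proposition \ref{sp}, so that the whole statement reduces to the weak-type maximal estimate $\|M_{T,r'}\|_{L^r\to L^{p,\infty}}\lesssim C_0^{1-\theta}C_1^{\theta}$. Since $1/r=(1-\theta)/2+\theta=(1+\theta)/2$ we have $1<r<2$, hence $r<r'$; and a direct computation shows that $1/p=(1-\gamma)\theta+1/r'$ equals $1/r-1/\alpha+1$ with $1/\alpha=1+(2/r-1)\gamma\ge1$, so Proposition \ref{sp} applies with $s=r'$. Together with the assumed bound $\|T\|_{L^r\to L^{p,\infty}}$, which supplies the other hypothesis of Proposition \ref{sp}, this gives $|\lan Tf,g\ran|\lesssim\big(\|T\|_{L^r\to L^{p,\infty}}+\|M_{T,r'}\|_{L^r\to L^{p,\infty}}\big)\Lambda^{\alpha}_{\Sp,r,r}(f,g)$, which is the claim.

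For the maximal estimate the starting point is purely elementary and is what replaces the analytic-interpolation argument of Corollary \ref{inter2} (which needed $|Tf|\le T|f|$): because $1/r'=(1-\theta)/2$, H\"older on the finite measure space $Q$ gives $\|h\|_{L^{r'}(Q)}\le\|h\|_{L^{2}(Q)}^{1-\theta}\|h\|_{L^{\infty}(Q)}^{\theta}$; applying this to $h=T(f1_{(3Q)^c})$, dividing by $|Q|^{1/r'}$, and taking the supremum over $Q\ni x$ yields the pointwise bound $M_{T,r'}f\lesssim(M_{T,2}f)^{1-\theta}(M_{T,\infty}f)^{\theta}\le(M_{T,2}f)^{1-\theta}(C_1M^{\gamma}f)^{\theta}$, with $f$ never deformed. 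Granting a bound for the intermediate operator $M_{T,2}$ from $L^r$ to the weak space $L^{r,\infty}$ with constant $\lesssim C_0$, the weak-type H\"older inequality $\|u^{1-\theta}v^{\theta}\|_{L^{p,\infty}}\le\|u\|_{L^{r,\infty}}^{1-\theta}\|v\|_{L^{q_\gamma,\infty}}^{\theta}$ (valid since $\tfrac1p=\tfrac{1-\theta}r+\tfrac{\theta}{q_\gamma}$) together with the weak-type boundedness of the fractional maximal operator $M^{\gamma}:L^r\to L^{q_\gamma,\infty}$, $\tfrac1{q_\gamma}=\tfrac1r-\gamma$, then give $\|M_{T,r'}f\|_{L^{p,\infty}}\lesssim C_0^{1-\theta}C_1^{\theta}\|f\|_{L^r}$, since $\tfrac1r-\theta\gamma=(1-\gamma)\theta+\tfrac1{r'}$.

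The real work, and the step I expect to be the main obstacle, is the bound for $M_{T,2}$. The difficulty is that $\|T\|_{L^2\to L^2}\le C_0$ only controls $\|T(f1_{(3Q)^c})\|_{L^2(\Rn)}$ and is blind to the scale $\ell(Q)$, so it cannot be inserted directly into the local average $|Q|^{-1/2}\|T(f1_{(3Q)^c})\|_{L^2(Q)}$ without producing a divergence as $\ell(Q)\to0$. I would decompose $f1_{(3Q)^c}=\sum_{k\ge0}f1_{A_k}$ along the dyadic annuli $A_k=3\cdot2^{k+1}Q\setminus3\cdot2^kQ$ and, for each $k$, bound $|Q|^{-1/2}\|T(f1_{A_k})\|_{L^2(Q)}$ by the minimum of $C_0\,|Q|^{-1/2}\|f\|_{L^2(A_k)}$ (from the $L^2\to L^2$ bound) and $\|T(f1_{A_k})\|_{L^{\infty}(Q)}$, the latter estimated by writing $f1_{A_k}=f1_{(3\cdot2^kQ)^c}-f1_{(3\cdot2^{k+1}Q)^c}$ and applying $M_{T,\infty}\le C_1M^{\gamma}$ on the cube $2^kQ$, which produces $C_1$ times a multiple of $M^{\gamma}$-type averages that decays geometrically in $k$ (using $\gamma<1/r$). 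Balancing and summing the two competing estimates in $k$, and recombining the annular averages into a single $r$-average over $3Q$, should yield the desired bound for $M_{T,2}$. The only genuinely delicate point is that the $L^2$ alternative carries the annulus volume $(2^k\ell(Q))^{n}$; it is precisely the super-linearity $\alpha=(1+(2/r-1)\gamma)^{-1}<1$ --- equivalently, that the intermediate maximal operator is handled in weak-$L^r$ rather than weak-$L^2$ --- that lets this geometric loss be absorbed and the sharp form $\Lambda^{\alpha}$ (rather than the $\varepsilon$-losing $\Lambda$) be reached. Everything else --- the finite-measure H\"older interpolation, the weak-type H\"older inequality, the mapping properties of $M_r$ and $M^{\gamma}$, and the appeal to Proposition \ref{sp} --- is routine.
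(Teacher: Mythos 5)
Your reduction of the sparse form bound to the weak-type estimate for $M_{T,r'}$ via Proposition \ref{sp} is correct, the exponent bookkeeping ($1/\alpha=1+(2/r-1)\gamma$ and $1/p=1/r-1/\alpha+1$) checks out, and the pointwise H\"older interpolation $M_{T,r'}f\lesssim (M_{T,2}f)^{1-\theta}(M_{T,\infty}f)^{\theta}$ is fine. The argument collapses, however, at the step you yourself flag as the main obstacle: the claim ${||M_{T,2}||}_{L^r\to L^{p,\infty}}\lesssim C_0$ (you even ask for the stronger weak $(r,r)$ bound) is left unproved, and it cannot be extracted from the hypotheses with the constant $C_0$ alone. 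The assumption ${||T||}_{L^2\to L^2}\le C_0$ gives no control of ${|Q|}^{-1/2}{||T(f1_{(3Q)^c})||}_{L^2(Q)}$ uniformly in the scale of $Q$ when $f\in L^r$, $r<2$: in your annular sketch the $L^2$ branch requires ${||f||}_{L^2(A_k)}$, which is not dominated by ${||f||}_{L^r}$ and may be infinite, and the moment you fall back on $M_{T,\infty}\le C_1M^{\gamma}$ to rescue those terms the constant of the intermediate bound is contaminated by $C_1$. That destroys exactly what the lemma must deliver, namely the split $C_0^{1-\theta}C_1^{\theta}$: in the application (Theorem \ref{sparseform2}) one has $C_0\sim 2^{jm}$ while $C_1\sim 2^{jm+jn(1-\rho(1-\gamma))}$, so any bound of the shape $(C_0+C_1)^{1-\theta}C_1^{\theta}$ loses the whole gain. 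A second, independent defect of keeping $f$ undeformed is that your weak-type H\"older step needs $M^{\gamma}:L^r\to L^{q_{\gamma},\infty}$ with $1/q_{\gamma}=1/r-\gamma$, hence $\gamma<1/r$; for $\gamma\ge 1/r$, which the lemma allows since only $0\le\gamma<1$ is assumed, one has $M^{\gamma}f\equiv\infty$ for every nonzero $f\in L^r$ and the scheme is vacuous.

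The paper's proof avoids any intermediate operator such as $M_{T,2}$. It estimates the level set $\{M_{T,r'}f>\lambda\}$ by splitting off $\{M^{\gamma}_rf>\delta\lambda\}$, where $M^{\gamma}_rf={(M^{\gamma}({|f|}^r))}^{1/r}$, so the fractional maximal operator always acts on ${|f|}^r\in L^1$; it covers the remaining set by disjoint cubes $Q_j$ produced by the definition of $M_{T,r'}$, dualizes $\ell^{r'}(\N;L^{r'})$, and runs Stein's analytic interpolation on $F(z)=\sum_j\int_{Q_j}T(f_z1_{(3Q_j)^c})g_{z,j}$ with $f_z=\mathrm{sgn}(f){|f|}^{r\{(1-z)/2+z\}}$. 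On the line $z=iy$ the $L^2$ hypothesis is applied to ${|f|}^{r/2}\in L^2$, giving $C_0{||f||}^{r/2}_{L^r}$; on the line $z=1+iy$ the hypothesis $M_{T,\infty}\le C_1M^{\gamma}$ is applied to ${|f|}^r$ and combined with $Q_j\cap E_0\neq\emptyset$ to give $C_1\delta^r\lambda^r$; optimizing in $\delta$ then yields the weak $(r,p)$ bound with constant $C_0^{1-\theta}C_1^{\theta}$. The deformation of $f$, which you discard as unnecessary, is thus the precise mechanism that lets the $L^2$ hypothesis act on an $L^r$ datum and keeps $M^{\gamma}$ acting on an $L^1$ function; to salvage your factorized approach you would have to prove the weak-type bound for $M_{T,2}$ on $L^r$ with constant $C_0$ alone, which I do not believe holds under the stated hypotheses.
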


\begin{proof}
We put $E=\{ M_{T,r'}f > \lambda \}$ for any $\lambda >0$. For each $\delta >0$, we have
\begin{eqnarray*}
|E| &\le& |\{ M_{T,r'} f> \lambda , \  M^{\gamma}_{r} f \le \delta \lambda \} |+| \{ M^{\gamma}_{r}f >\delta \lambda  \}| \\
&=:&|E_0|+|E_1|,
\end{eqnarray*}
where $M^{\gamma}_{r} f= {M^{\gamma} ({|f|}^r) }^{1/r}$. Weak-type boundedness of $M^{\gamma}_{r} $ gives  
\begin{eqnarray*}
{|E_1|}^{1/q} \lesssim \delta^{-1} {\lambda}^{-1} {||f||}_{L^r},
\end{eqnarray*} 
with $1/q=1/r - \gamma/r$. We need to estimate the $|E_0|$. For any $x \in E_0$, there exists a cube $Q_x$ such that
\begin{eqnarray*}
|Q_x| < \lambda^{r'} {||T(f1_{{(3Q_x)}^c} )||}^{r'}_{L^{r'} (Q_x)}.
\end{eqnarray*} 
Let $K \subset E_0$ be an any compact set, then we can select finite pairwise disjoint subcollection ${ \{3Q_j \}}_j \subset {\{ 3Q_x\}}_{x\in E}$ such that
\begin{eqnarray*}
{|K|} \lesssim \sum_{j} |Q_j|.
\end{eqnarray*}
From the duality of $\ell^{r'}(\N ;L^{r'})$, we obtain 
\begin{eqnarray*}
{|K|}^{1/r'} &\le& {\lambda}^{-1} {\left(\sum_j {||T(f1_{ {(3Q_j)}^c})||}^{r'}_{L^{r'}(Q_j)}\right)}^{1/r'}\\
&=& \lambda^{-1}  \sup_{{\{g_j\}}_j} \left| \sum_j\int_{Q_j} T(f1_{{(3Q_j)}^c})g_j\right|.
\end{eqnarray*}
Here, the supremum is taken all over the $g= { \{g_j\}}_j$ such that ${||g||}_{\ell^{r}(\N ;L^r)} \le 1$. We define the analytic function $F$ on the open strip by
\begin{eqnarray*}
F(z)=\sum_j \int_{Q_j} T(f_z 1_{{(3Q_j)}^c } )(x) g_{z,j}(x) dx,
\end{eqnarray*}
where
\begin{eqnarray*}
f_z&=& sgn(f) {|f|  }^{ r \{ (1-z)/2+z \} },  \\
g_{z,j}&=&sgn(g_j) {|g_j|}^{r \{ (1-z)/2+z \} }.
\end{eqnarray*}
By $L^2$ boundness of $T$, one has
\begin{eqnarray*}
|F(iy)| &\le& \sum_j {||Tf_z||}_{L^2(Q_j)} {||g_j||}^{r/2}_{L^{r}} + C_0 \sum_j{||f_z||}_{L^2(3Q_j)} {||g_j||}^{r/2}_{L^{r}}\\
&\le& {||Tf_z||}_{L^2}+C_0 {||f_z||}_{L^2} \\
&\lesssim& C_0 {||f||}^{r/2}_{L^r} .
\end{eqnarray*}
Since $Q_j \cap E_0 \neq \emptyset $, we obtain
\begin{eqnarray*}
|F(1+iy)| &\le& \sum_j  \inf_{x \in Q_j} M_{T,\infty}f_z(x) {||g_j||}^{r}_{L^r}\\
 &\le& C_1 \sum_j  \inf_{x \in Q_j}{ M^{\gamma}_r f (x) }^r {||g_j||}^{r}_{L^r}\\
 &\le&C_1 {\delta}^r {\lambda}^r.
\end{eqnarray*}
By these results, we have
\begin{eqnarray*}
{|K|} &\le& {(C^{1-\theta}_0 C^{\theta}_{1} )}^{r'}\delta^{rr'\theta } \lambda^{rr' \theta-r'}{||f||}^{rr'(1-\theta)/2}_{L^r} \\
&=& {(C^{1-\theta}_0 C^{\theta}_{1} )}^{r'}\delta^{rr'\theta } \lambda^{-r }{||f||}^{r}_{L^r} , \\
\end{eqnarray*}
and
\begin{eqnarray*}
|E| \le \delta^{-q} {\lambda}^{-q} {||f||}^{q}_{L^r} + {(C^{1-\theta}_0 C^{\theta}_{1} )}^{r'}\delta^{rr'\theta } \lambda^{-r }{||f||}^{r}_{L^r}. \\
\end{eqnarray*}
Here, we optimize for $\delta$ to obtain
\begin{eqnarray*}
|E|^{1/p} \le \lambda^{-1}  {||f||}_{L^r} , \\
\end{eqnarray*}
where $1/p =r\theta /q +1/r'=(1-\gamma)\theta +1/r'$. Hence, $M_{T,r'}$ be a weak-type $(r,p)$ operator which yields 
\begin{eqnarray*}
|\lan Tf ,g \ran| \lesssim ({||T||}_{L^r \to L^{p,\infty} }+C^{1-\theta}_{0} C^{\theta}_{1} ) \Lambda^{\alpha}_{\Sp ,r,r}(f,g).
\end{eqnarray*}
\end{proof}
Let us prove the Theorem \ref{sparseform2}.
\begin{proof}
The theorem follows from the pointwise estimate
\begin{eqnarray*}
M_{a_j(x,D) ,\infty} f(x) \lesssim 2^{jm+jn(1-\rho(1-\gamma)) } M^{\gamma}f(x),
\end{eqnarray*}
Lemma \ref{inter3} and Marchinkiewicz interpolation theorem. Indeed, this estimate and Lemma \ref{inter3} yield
\begin{eqnarray*}
{||M_{a_j(x,D) ,s}||}_{L^{s'} \to L^{s',\infty} } \lesssim 2^{jm+jn(1-\rho)(1-2/s)+j \rho  n (1/\alpha -1)} 
\end{eqnarray*}
by taking $1/\alpha= 1+(2/r-1)\gamma$. Moreover, by interpolating this with $\alpha = 1$ and ${||M_{a_j(x,D),s}||}_{L^2 \to L^{2,\infty} } \lesssim 2^{jm+jn(1-\rho)(1/2-1/s)}$, we have
\begin{eqnarray*}
{||M_{a_j(x,D) ,s}||}_{L^{r} \to L^{r,\infty} } \lesssim 2^{jm+jn(1-\rho)(1/r-1/s)}.
\end{eqnarray*}
Thus, we obtain the desired sparse form bounds. Now, we prove the above pointwise estimate. For any $x, z \in Q$ and $\gamma \in [0, 1)$, we integrate by parts $N \in \N$ times to obtain
\begin{eqnarray*}
 |a_j(x,D)(f1_{{(3Q)}^c } )(z)| &\lesssim& 2^{jm+jn}   \int_{{(3Q)}^c}  {(1+2^{2j\rho N }{ |z-y|}^{2N})}^{-1}  {|f(y)|}  dy  \\
&\lesssim& 2^{jm+jn} \int {(1+2^{2j\rho N }{ |x-y|}^{2N})}^{-1} {|f(y)|} dy \\
&\lesssim&2^{jm+jn(1-\rho(1-\gamma)) }  M^{\gamma}f(x).
\end{eqnarray*}
Hence, we obtain
\begin{eqnarray*}
 M_{a_j(x,D) ,\infty } (x) \lesssim 2^{jm+jn(1-\rho(1-\gamma))} M^{\gamma}f(x) ,
\end{eqnarray*}
and complete the proof.
\end{proof}
\subsection{Application to the boundedness on weighted Besov spaces}
This section is devoted to obtain the boundedness on weighted Besov space of pseudodifferential operators. To do this, we establish the weighted bounds for ${\Lambda}^{\alpha}_{\Sp ,r,s'}$ by using Bernicot, Frey and Petermichl's idea in \cite{B-F-P}. 
\begin{pro}
\label{weight}
Let $1\le r <q \le p  <s\le \infty$ and $1/\alpha=1/p'+1/q$. We assume the weight $\omega$ satisfy $ {\omega}^q \in A_{q/r} \cap RH_{(p/q)(s/p)'}$. Then, for any sparse family $\Sp \subset \D$ with some dyadic lattice $\D$, we have
\begin{eqnarray*}
{\Lambda}^{\alpha}_{\Sp,r,s'} (f,g) \lesssim {({[{\omega}^q]}_{A_{q/r}} {[ {\omega}^q] }_{{RH}_{(p/q)( s/p)' }})}^{\delta} {||f||}_{L^q({\omega}^q)} {||g||}_{L^{p'}({\omega}^{-p'})},
\end{eqnarray*}
where
\begin{eqnarray*}
\delta=\max \left\{ \frac{1}{q-r}, \frac{p(s-1)}{q(s-p)} \right\}.
\end{eqnarray*}

\end{pro}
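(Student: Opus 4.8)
The plan is to reduce the weighted estimate for $\Lambda^\alpha_{\Sp,r,s'}$ to the known linear ($\alpha=1$) sparse form bound of Bernicot, Frey and Petermichl \cite{B-F-P} quoted in the introduction, via an exponentiation/normalization trick. Concretely, write $F = |f|^{?}$ and $G = |g|^{?}$ with exponents chosen so that $\langle f\rangle_{r,Q}^{\alpha}$ becomes a power of $\langle F\rangle_{r/\alpha\cdot(\text{something}),Q}$; more precisely I would set $u = {\omega}^q$ and aim to show
\begin{eqnarray*}
\sum_{Q\in\Sp}|Q|\,\langle f\rangle_{r,Q}^{\alpha}\langle g\rangle_{s',Q}^{\alpha}
\le \left(\sum_{Q\in\Sp}|Q|\,\langle f\rangle_{r,Q}\langle g\rangle_{s',Q}\right)^{\alpha}
\Big(\#\ \text{or}\ \text{measure factor}\Big)^{1-\alpha},
\end{eqnarray*}
but this crude Hölder-in-$Q$ step is too lossy. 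Instead the correct route is to absorb the power $\alpha$ directly: since $1/\alpha = 1/p' + 1/q$ and $p'\ge 1$, $q\ge r$, I would apply Hölder's inequality \emph{termwise} in the pairing $\langle f\rangle_{r,Q}^{\alpha}\langle g\rangle_{s',Q}^{\alpha}\,|Q|$ after inserting the weight: bound $\langle f\rangle_{r,Q} = \langle f u^{1/q} u^{-1/q}\rangle_{r,Q}$ and, using $u\in A_{q/r}$ together with the $r$-th power Hölder inequality on $Q$, get $\langle f\rangle_{r,Q}\lesssim ([u]_{A_{q/r}})^{?}\,\langle |f|^q u\rangle_{1,Q}^{1/q}\,\langle u^{-(q/r)'\cdot r/q}\rangle_{1,Q}^{\,\cdot}$; symmetrically control $\langle g\rangle_{s',Q}$ using the reverse Hölder hypothesis $u\in RH_{(p/q)(s/p)'}$, which is exactly the condition that turns the $s'$-average of $g$ into an $L^{p'}(u^{-p'})$-friendly average.

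The key steps, in order: (1) Translate the weight hypotheses into pointwise Muckenhoupt/reverse-Hölder inequalities on each cube $Q\in\Sp$: from $u=\omega^q\in A_{q/r}$ derive the two-weight bound on $\langle f\rangle_{r,Q}$, and from $u\in RH_{(p/q)(s/p)'}$ derive a dual bound on $\langle g\rangle_{s',Q}$, tracking the exact exponents so that the product of the two constants reproduces $([u]_{A_{q/r}}[u]_{RH_{(p/q)(s/p)'}})^{\delta}$ with $\delta$ as stated. (2) Substitute these into the defining sum, so that
\begin{eqnarray*}
\Lambda^\alpha_{\Sp,r,s'}(f,g)^\alpha \lesssim (\cdots)\sum_{Q\in\Sp}|Q|\,\langle |f|^q u\rangle_{1,Q}^{\alpha/q}\,\langle |g|^{p'}u^{-p'}\rangle_{1,Q}^{\alpha/p'}\,\langle u\rangle_{1,Q}^{\cdots},
\end{eqnarray*}
where the leftover powers of $\langle u\rangle_{1,Q}$ cancel against negative powers produced by the $A_{q/r}$ and $RH$ steps — this cancellation is the whole point of the precise choice $1/\alpha = 1/p'+1/q$. (3) Apply Hölder's inequality for sums with exponents $q/\alpha$ and $p'/\alpha$ (valid since $\alpha/q + \alpha/p' = 1$) to split the $Q$-sum into $\big(\sum_Q |Q|\langle|f|^q u\rangle_{1,Q}\big)^{\alpha/q}\big(\sum_Q|Q|\langle|g|^{p'}u^{-p'}\rangle_{1,Q}\big)^{\alpha/p'}$. (4) Bound each of these sums by an $L^1$ norm using sparseness (the sets $E_Q$ are disjoint, $|Q|\lesssim|E_Q|$, and $|Q|\langle h\rangle_{1,Q}\lesssim \int_{E_Q} M h$), i.e. by $\|M(|f|^q u)\|_{L^1}$-type quantities, then dominate by $\|f\|_{L^q(u)}^q$ and $\|g\|_{L^{p'}(u^{-p'})}^{p'}$ via the $L^{q/\alpha}$, $L^{p'/\alpha}$ boundedness of the Hardy–Littlewood maximal operator (legitimate because $q/\alpha>1$ and $p'/\alpha>1$ from $r<q$ and $p<s$). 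Taking the $\alpha$-th root and relabeling $\omega^{-p'}=({\omega}^q)^{-p'/q}$ recovers the claimed form with $\|g\|_{L^{p'}(\omega^{-p'})}$.

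The main obstacle I expect is bookkeeping the exponents so that every stray power of $\langle u\rangle_{1,Q}$ and every weight constant lands exactly where the statement predicts — in particular verifying that the two reverse-Hölder/Muckenhoupt exponents combine to give $\delta = \max\{\tfrac{1}{q-r},\tfrac{p(s-1)}{q(s-p)}\}$ rather than something slightly worse. A secondary subtlety is checking the ranges: one must ensure $q/r>1$ so $A_{q/r}$ makes sense and $(p/q)(s/p)' \ge 1$ so the reverse-Hölder class is nonempty, both of which follow from $1\le r<q\le p<s\le\infty$, and that the sharp reverse Hölder inequality (used implicitly, as in Theorem \ref{A infty}) is invoked only on $A_\infty$ weights, which $u$ is. Once the exponent algebra is pinned down, the remaining steps are the standard sparse-to-weighted-$L^p$ reduction and are routine.
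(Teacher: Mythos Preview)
Your step (4) contains a genuine gap. After your termwise H\"older and the sum-H\"older with exponents $q/\alpha$, $p'/\alpha$, you are left with factors of the form $\bigl(\sum_{Q\in\Sp}|Q|\,\langle h\rangle_{1,Q}\bigr)^{\alpha/q}$ with $h=|f|^q\omega^q$, and you propose to control this via $\sum_Q|Q|\langle h\rangle_{1,Q}\lesssim\sum_Q\int_{E_Q}Mh\le\|Mh\|_{L^1}$. But the Hardy--Littlewood maximal function is not bounded on $L^1$, and indeed $\sum_{Q\in\Sp}|Q|\langle h\rangle_{1,Q}$ is not controlled by $\|h\|_{L^1}$ for a general sparse family: take a nested chain $Q_0\supset Q_1\supset\cdots$ with $|Q_{k+1}|=\tfrac12|Q_k|$ and $h=1_{Q_N}$; the sum equals $(N+1)|Q_N|$ while $\|h\|_{L^1}=|Q_N|$. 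Your appeal to ``$L^{q/\alpha}$-boundedness of $M$'' does not apply here, since by this stage the exponent $q$ has already been absorbed into $h$ and the norm you need is genuinely $L^1$. A warning sign: your weight bookkeeping in step~(2) actually produces the constant $([\omega^q]_{A_{q/r}}[\omega^q]_{RH_\tau})^{1/q}$, and $1/q$ is strictly smaller than the claimed exponent $\delta$; something must therefore be lost elsewhere, and it is lost in step~(4), irreparably.

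The paper's proof, following \cite{B-F-P}, avoids this by \emph{not} absorbing the weight into $f$ and $g$. It introduces the auxiliary measures $\mu=\omega^{-rq/(q-r)}$ and $\nu=\omega^{p's'/(p'-s')}$, and writes $\langle f\rangle_{r,Q}=\langle\mu\rangle_Q^{1/r}F_Q$ with $F_Q=\bigl(\mu(Q)^{-1}\int_Q|f|^r\bigr)^{1/r}$ (similarly $G_Q$ with $\nu$). The pair satisfies $\mu=\nu^{1-\beta'}$ for a suitable $\beta$, and the key step---using $[\nu]_{A_\beta}$, sparseness, and the pointwise identity $\mu^{-\gamma}\mu^{1/q}\nu^{1/p'}\equiv 1$---is to show $|Q|\langle\mu\rangle_Q^{\alpha/r}\langle\nu\rangle_Q^{\alpha/s'}\lesssim[\nu]_{A_\beta}^{\,\cdots}\,\mu(E_Q)^{\alpha/q}\nu(E_Q)^{\alpha/p'}$; a case split on the sign of $\gamma=1/r-(\beta-1)/s'$ is precisely what produces the $\max$ in $\delta$. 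After H\"older with exponents $q/\alpha,p'/\alpha$ one arrives at $\sum_Q\mu(E_Q)F_Q^q\le\int (M^{\D}_{r,\mu}(f\mu^{-1/r}))^q\,d\mu$, and the \emph{weighted} dyadic maximal function $M^{\D}_{r,\mu}$ is bounded on $L^q(\mu)$ with constant depending only on $q/r>1$, independently of $\mu$. This weighted $L^q$ step is what stands in for your unworkable unweighted $L^1$ step.
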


\begin{proof}
We set
\begin{eqnarray*}
\mu=\omega^{-rq/(q-r)}  \ \ and  \ \ \nu= \omega^{p's'/(p'-s')}.
\end{eqnarray*}
Furthermore, let us define
\begin{eqnarray*}
F_Q= { \left( \frac{1}{\mu(Q) } \int_Q {|f|}^r  \right) }^{1/r} \ \ and  \ \ G_Q={ \left( \frac{1}{\nu(Q)} \int_Q {|g|}^{s'} \right)}^{1/s'}.
\end{eqnarray*}
Then, we have 
\begin{eqnarray*}
{\Lambda}^{\alpha}_{\Sp,r,s'} (f,g) \le {\left( \sum_{Q \in {\Sp}}  |Q|  {\lan \mu \ran}^{\alpha /r}_Q {\lan \nu \ran}^{\alpha / s'}_Q {F_Q}^{\alpha} {G_Q}^{\alpha} \right)}^{1/\alpha}.
\end{eqnarray*}
We estimate $|Q|   {\lan \mu \ran}^{\alpha /r}_Q {\lan \nu \ran}^{\alpha / s'}_Q $. By taking 
\begin{eqnarray*}
\beta = 1+ \frac{1/r-1/q}{1/p-1/s},
\end{eqnarray*}
we obtain $\mu = {\nu}^{1-\beta'}$ and $ {\lan \nu \ran}_Q {\lan \mu \ran}^{\beta -1}_{Q} \le {[\nu]}_{A_{\beta}}$. Here, we assume
\begin{eqnarray*}
\frac{1}{q-r} \le \frac{p(s-1)}{q(s-p)},
\end{eqnarray*}
which gives $\gamma := 1/r -(\beta -1)/s' \le 0$. From this assumption and the sparseness of $\Sp$, one obtains
\begin{eqnarray*}
 |Q| {\lan \mu \ran}^{\alpha /r}_Q {\lan \nu \ran}^{\alpha / s'}_Q &\le& {[\nu]}^{\alpha /s'}_{A_\beta} |Q| {\lan \mu \ran}^{\alpha \gamma}_{Q} \\
 &\le& {[\nu]}^{\alpha /s'}_{A_\beta} {|E_Q|}^{1-\alpha \gamma} {\left( \int_{E_Q} \mu \right)}^{ \alpha \gamma}.
\end{eqnarray*}
On the other hands, it holds that ${\mu}^{-\gamma }{\mu}^{1/q} {\nu}^{1/p'}=1$ since $\nu= {\mu}^{1- \beta}$. Hence, by setting $1/t=1/q+1/p' -\gamma=1 /\alpha -\gamma$ and using H\"{o}lder's inequality, we have
\begin{eqnarray*}
|E_Q|^{1/t} &=& {|| {\mu}^{-\gamma }{\mu}^{1/q} {\nu}^{1/p'}||}_{L^t(E_Q)} \\
&\le& {\mu(E_Q)}^{-\gamma} {\mu(E_Q)}^{1/q} {\nu(Q)}^{1/p'},
\end{eqnarray*}
which yields
\begin{eqnarray*}
 |Q| {\lan \mu \ran}^{\alpha /r}_Q {\lan \nu \ran}^{\alpha / s'}_Q &\le& {[\nu]}^{\alpha /s'}_{A_\beta}  {\mu(E_Q)}^{\alpha/q} {\nu(Q)}^{\alpha/p'}.
\end{eqnarray*}
From these results, we obtain
\begin{eqnarray*}
{\Lambda}^{\alpha}_{\Sp,r,s'} (f,g) &\le& {[\nu]}^{1/s'}_{A_\beta}  {\left( \sum_{Q \in {\Sp}}  {({F_Q} {\mu(E_Q)}^{1/q} G_Q {\nu(E_Q)}^{1/p'}  )}^{\alpha}\right)}^{1/\alpha} \\
&\le& {[\nu]}^{1 /s'}_{A_\beta}  {\left( \sum_{Q \in {\Sp}}  {F_Q}^q {\mu(E_Q)} \right)}^{1/q}  {\left( \sum_{Q \in {\Sp}}   {G_Q}^{p'}  {\nu(E_Q)} \right)}^{1/p'} \\
&\le& {[\nu]}^{1 /s'}_{A_\beta} { \left( \int {|M^{\D}_{r,\mu } (f {\mu}^{-1/r})|}^q d \mu \right)}^{1/q} { \left( \int {|M^{\D}_{s',\nu } (g {\nu}^{-1/s'})|}^{p'} d \nu \right)}^{1/p'} \\
&\lesssim&{[\nu]}^{1 /s'}_{A_\beta} {||f||}_{L^q({\omega}^q)} {||g||}_{L^{p'}({\omega}^{-p'} )}.
\end{eqnarray*}
In another case, by using
\begin{eqnarray*}
 |Q| {\lan \mu \ran}^{\alpha /r}_Q {\lan \nu \ran}^{\alpha / s'}_Q &\le& {[\nu]}^{\alpha /\{r(\beta -1) \} }_{A_\beta} |Q| {\lan \nu \ran}^{\alpha /s' - \alpha/\{r(\beta-1) \} }_{Q} \\
 &\le&  {[\nu]}^{\alpha /\{r(\beta -1) \} }_{A_\beta} {|E_Q|}^{1-\alpha \gamma} {\left( \int_{E_Q} \mu \right)}^{ \alpha \gamma},
\end{eqnarray*} 
and the same discussion as above, we have
\begin{eqnarray*}
{\Lambda}^{\alpha}_{\Sp,r,s'} (f,g) \lesssim   {[\nu]}^{1 /\{r(\beta -1) \} }_{A_\beta} {||f||}_{L^q({\omega}^q)} {||g||}_{L^{p'}({\omega}^{-p'})}.\end{eqnarray*}
Concluding these results, we have
\begin{eqnarray*}
{\Lambda}^{\alpha}_{\Sp,r,s'} (f,g) \lesssim   {[\nu]}^{\delta }_{A_\beta} {||f||}_{L^q({\omega}^q)} {||g||}_{L^{p'}({\omega}^{-p'})},
\end{eqnarray*}
where 
\begin{eqnarray*}
\delta =\max \left\{  \frac{q(s-p)}{ps(q-r)},\frac{s-1}{s}  \right\}.
\end{eqnarray*}
To complete the proof, we need to estimate $[\nu]_{A_{\beta}}$. However, it is deduced from the simple calculation. The detail is the following:
\begin{eqnarray*}
{\lan \nu \ran}_Q {\lan \mu \ran}^{\beta-1}_{Q} &=& {\lan {\omega}^{q \cdot {p(s/p)'/q}} \ran}_Q {\lan {\omega}^{q \cdot(-r/(q-r))} \ran}^{ p(s/p)' /q \cdot (q/r -1)}_{Q} \\
&\le& {( {[ {\omega}^q]}_{ {RH}_{(p/q)(s/q)'} } {\lan {\omega}^q \ran }_Q  {\lan {\omega}^{q \cdot(-r/(q-r))} \ran}^{  q/r -1}_{Q} )}^{p(s/p)'/q}\\
&\le&{ ( { [\omega^q ]}_{ {RH}_{(p/q)(s/p)' }} {[\omega^q ]}_{A_{q/r} })}^{ps/\{q(s-p)\} }.
\end{eqnarray*}
\end{proof}
We define the weighted Besov spaces according to Bui \cite{Bu}. Suppose $0< p, \sigma < \infty$ and $\kappa \in \R$, then weighted Besov spaces $B^{\kappa}_{p,q}(\omega)$ are defined by
\begin{eqnarray*}
B^{\kappa}_{p,q}(\omega) &=& \{ f \in {\mathscr{S}}' \ ; \ {||f||}_{B^{\kappa}_{p,q}(\omega) } < \infty \} ,\\
 {||f||}_{B^{\kappa}_{p,\sigma }(\omega)}&=& {\left( \sum_{j \ge 0} 2^{j\kappa \sigma } {|| \phi_j * f||}^{\sigma}_{L^p(\omega) }  \right)}^{1/\sigma}
\end{eqnarray*}
for any $ \omega \in A_{\infty}$. Bui showed that $ \mathscr{S} $ is dense subset of $B^{\kappa}_{p,\sigma}(\omega)$. Hence, the Theorem \ref{sparseform1} and Theorem \ref{sparseform2}, and Proposition \ref{weight} give the following results about boundedness of pseudodifferential operators on weighted Besov spaces.
\begin{cor} 
\label{pse Besov}
Let $a\in S^{m}_{\rho,\delta} $ with $m\in \R$ and $0\le \delta <\rho \le 1$. Then, we have the following bounds.

\noindent$({\rm{i}})$ Let $2< q  \le p <\infty$ and $\omega^q \in A_{q/2} \cap {RH}_{(p/q)(s/p)'}$ with some $s\in(p ,\infty]$. Then, for any $\kappa \in \R$ and $0<\sigma < \infty$, $a(x,D)$ be a bounded operators from $B^{\kappa +   {\tilde{\kappa}}_1  }_{q,\sigma}(\omega^q)$ to $ B^{\kappa}_{p,\sigma}(\omega^p)$ where
\begin{eqnarray*}
  {\tilde{\kappa}}_1 =m+ n(1- \rho)\left( \frac{1}{2} - \frac{1}{s} \right) +\rho n\left(\frac{1}{q}-\frac{1}{p} \right).
\end{eqnarray*} 

\noindent$({\rm{ii}})$ Let $1< q  \le 2 \le p \le q' <\infty$ and $\omega^q \in A_{q/r} \cap {RH}_{(p/q)(r'/p)'}$ with some $r \in [1 ,q)$. Then, for any $\kappa \in \R$ and $0<\sigma \le \infty$, $a(x,D)$ be a bounded operators from $B^{ \kappa + {\tilde{\kappa}}_2 }_{q,\sigma}(\omega^q)$ to $ B^{\kappa}_{p,\sigma}(\omega^p)$ where
\begin{eqnarray*}
  {\tilde{\kappa}}_2 =m+ n(1- \rho)\left( \frac{2}{r} - 1 \right) +\rho n\left(\frac{1}{q}-\frac{1}{p} \right).
\end{eqnarray*} 

\end{cor}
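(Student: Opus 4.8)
The plan is to estimate the Besov norm of $a(x,D)f$ one Littlewood--Paley block at a time, running each block through the per-block sparse form bound contained in the proof of Theorem~\ref{sparseform1} (resp. Theorem~\ref{sparseform2}) and then through Proposition~\ref{weight}, and finally reassembling. As a preliminary I would reduce to $f\in\sh$ by Bui's density result and note that the hypotheses force $\omega^{p}=(\omega^{q})^{p/q}\in A_{p}$: the reverse-H\"older exponent in the hypothesis dominates $p/q$, so $\omega^{q}\in A_{q/2}\cap RH_{p/q}$ in part~(i) (resp. $\omega^{q}\in A_{q/r}\cap RH_{p/q}$ in part~(ii)), and since raising an $A_{a}\cap RH_{b}$ weight to the power $b$ produces a weight in $A_{b(a-1)+1}$, one gets $\omega^{p}\in A_{p/2-p/q+1}\subseteq A_{p}\subseteq A_{\infty}$ (and the analogue in part~(ii)). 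In particular the target space $B^{\kappa}_{p,\sigma}(\omega^{p})$ is well defined and $\|\phi_{k}*h\|_{L^{p}(\omega^{p})}\lesssim\|Mh\|_{L^{p}(\omega^{p})}\lesssim\|h\|_{L^{p}(\omega^{p})}$ uniformly in $k$.

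For the core step, decompose $a(x,D)=\sum_{j\ge0}a_{j}(x,D)$ as in Section~2 and write $a_{j}(x,D)f=a_{j}(x,D)\big((\phi_{j-1}+\phi_{j}+\phi_{j+1})*f\big)$. Because $a\in S^{m}_{\rho,\delta}$ with $\delta<1$, the composition $\phi_{k}*a_{j}(x,D)$ has operator norm decaying faster than any power of $2^{-|j-k|}$ once $|j-k|$ exceeds a fixed $C_{0}$, so $\|\phi_{k}*a(x,D)f\|_{L^{p}(\omega^{p})}$ is, up to a rapidly summable tail, the sum over $|j-k|\le C_{0}$ of $\|\phi_{k}*a_{j}(x,D)f\|_{L^{p}(\omega^{p})}\le\|M(a_{j}(x,D)f)\|_{L^{p}(\omega^{p})}\lesssim\|a_{j}(x,D)f\|_{L^{p}(\omega^{p})}$. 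For each such $j$, by $L^{p}(\omega^{p})$--$L^{p'}(\omega^{-p'})$ duality and the truncation/$\liminf$ device from the proof of Theorem~\ref{sparseform1}, I would bound $\|a_{j}(x,D)f\|_{L^{p}(\omega^{p})}=\sup_{g}\big|\langle a_{j}(x,D)\big((\phi_{j-1}+\phi_{j}+\phi_{j+1})*f\big),g\rangle\big|$ using the per-block inequality $|\langle a_{j}(x,D)F,g\rangle|\lesssim 2^{j\kappa_{1}}\Lambda^{\alpha}_{\Sp_{j},2,s'}(F,g)$ established at the start of the proof of Theorem~\ref{sparseform1}, with $\alpha$ fixed by $1/\alpha=1/p'+1/q$. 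One checks $2<q\le p<s$, $2/3<\alpha\le1$ and $2\le s$, so Theorem~\ref{sparseform1} applies and $\kappa_{1}=\tilde\kappa_{1}$; then Proposition~\ref{weight} (with $r=2$ and $\omega^{q}\in A_{q/2}\cap RH_{(p/q)(s/p)'}$) gives $\|a_{j}(x,D)f\|_{L^{p}(\omega^{p})}\lesssim 2^{j\tilde\kappa_{1}}C_{\omega}\,\|(\phi_{j-1}+\phi_{j}+\phi_{j+1})*f\|_{L^{q}(\omega^{q})}$, with $C_{\omega}$ the constant of Proposition~\ref{weight}.

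Raising to the power $\sigma$ and summing in $k$, $\|a(x,D)f\|_{B^{\kappa}_{p,\sigma}(\omega^{p})}^{\sigma}$ is then at most a constant times $\sum_{k}2^{k\kappa\sigma}\big(\sum_{j}2^{-N|j-k|}2^{j\tilde\kappa_{1}}\|(\phi_{j-1}+\phi_{j}+\phi_{j+1})*f\|_{L^{q}(\omega^{q})}\big)^{\sigma}$ for $N$ as large as desired; writing $2^{j\tilde\kappa_{1}}=2^{(j-k)\tilde\kappa_{1}}2^{k\tilde\kappa_{1}}$, taking $N>|\tilde\kappa_{1}|+|\kappa|+1$, and applying discrete Young ($\sigma\ge1$) or $(\sum b_{j})^{\sigma}\le\sum b_{j}^{\sigma}$ ($0<\sigma<1$), this is $\lesssim\sum_{j}2^{j(\kappa+\tilde\kappa_{1})\sigma}\|(\phi_{j-1}+\phi_{j}+\phi_{j+1})*f\|_{L^{q}(\omega^{q})}^{\sigma}\lesssim\|f\|_{B^{\kappa+\tilde\kappa_{1}}_{q,\sigma}(\omega^{q})}^{\sigma}$, and density finishes part~(i). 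Part~(ii) runs identically with Theorem~\ref{sparseform2}(i) in place of Theorem~\ref{sparseform1}, its parameter $s$ set equal to $r'$, and $\alpha$ again given by $1/\alpha=1/p'+1/q$; one verifies $r/2<\alpha\le1$ and $1\le r<q\le p<r'$, so that Theorem~\ref{sparseform2}(i) and Proposition~\ref{weight} (now with $\omega^{q}\in A_{q/r}\cap RH_{(p/q)(r'/p)'}$) both apply, and the resulting exponent is exactly $\tilde\kappa_{2}=m+n(1-\rho)(2/r-1)+\rho n(1/q-1/p)$.

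I expect the main obstacle to be the frequency localization restricting the $j$-sum to $|j-k|\le C_{0}$, i.e. the claim that $\phi_{k}*a_{j}(x,D)$ has operator norm decaying rapidly in $|j-k|$; this is precisely where $\delta<1$ (here $\delta<\rho\le1$) is essential. It follows from the symbolic calculus — the composition formula for $\phi_{k}(D)\circ a_{j}(x,D)$ followed by integration by parts produces a gain $2^{-N(1-\delta)\max(j,k)}\le2^{-N(1-\delta)|j-k|}$ — but requires care, since the $x$-dependence of $a$ rules out an exact Fourier-support argument and the decay only emerges after the symbol is handled distributionally (equivalently, after a Bony paraproduct decomposition separating an exactly band-limited main part from a negligible remainder). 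Everything else — matching the indices of Theorems~\ref{sparseform1}--\ref{sparseform2} with those of Proposition~\ref{weight}, the weighted duality, the truncation limit, and the discrete summation — is routine; the two bookkeeping points are that for $m>0$ one first writes $a(x,D)=\big(a(x,D)(1-\Delta)^{-m/2}\big)(1-\Delta)^{m/2}$ to reduce to $m\le0$ and convert $(1-\Delta)^{m/2}$ into a shift of the Besov smoothness, and that $\rho=1$ is covered by the classical boundedness of $S^{m}_{1,\delta}$ operators on weighted Besov spaces.
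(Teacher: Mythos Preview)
Your overall strategy coincides with the paper's: apply the per-block sparse form bound hidden in the proof of Theorem~\ref{sparseform1} (resp.\ Theorem~\ref{sparseform2}) to each $j$-piece, feed it through Proposition~\ref{weight}, and sum. The divergence is in how the off-diagonal decay between $\phi_k$ and $a_j$ is produced. You invoke the symbolic calculus of the composition $\phi_k(D)\circ a_j(x,D)$ to claim a gain $2^{-N(1-\delta)|j-k|}$. The paper instead writes $\langle D\rangle^{\ell}a(x,D)=b_{\ell}(x,D)\in S^{m+\ell}_{\rho,\delta}$ (this is precisely where $\delta<\rho$ enters), pairs $b_{\ell}(x,D)(\phi_j*f)$ with the modified test function $\langle D\rangle^{-\ell}\phi_k(-\cdot)*g$, applies the per-block sparse bound to $b_{\ell}$ and Proposition~\ref{weight}, and then uses $|\langle D\rangle^{-\ell}\phi_k*g|\lesssim 2^{-k\ell}Mg$ together with $\omega^{-p'}\in A_{p'}$. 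This yields
\[
\|\phi_k*a(x,D)(\phi_j*f)\|_{L^{p}(\omega^{p})}\lesssim 2^{(j-k)\ell}\,2^{j\tilde\kappa}\,\|\phi_j*f\|_{L^{q}(\omega^{q})}\qquad\text{for every }\ell\in\Z,
\]
and choosing $\ell\gg 0$ when $j\le k$ and $\ell\ll 0$ when $j>k$ gives exactly your $2^{-N|j-k|}$ factor, but \emph{directly in the weighted norm}.

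That is the gap in your sketch. The composition calculus certainly provides decay at the level of the symbol, hence of unweighted operator norms, but nothing in your write-up explains how this decay migrates to the $L^{q}(\omega^{q})\to L^{p}(\omega^{p})$ norm needed for the tail $|j-k|>C_0$; a small unweighted norm does not by itself imply a small weighted one. To close your version you would have to re-enter the sparse machinery for $\phi_k(D)a_j(x,D)$ with quantitative control of the symbol seminorms in $j,k$ (this is essentially what the paper's Appendix carries out for the sharper constant). The $\langle D\rangle^{\ell}$ lifting avoids all of this by moving the off-diagonal factor onto $g$ and invoking a single maximal-function bound. Your treatment of the main terms $|j-k|\le C_0$, the weighted duality, the truncation/liminf device, and the final $\ell^{\sigma}$ summation are all correct and match the paper; the reductions you mention for $m>0$ and $\rho=1$ are sensible housekeeping.
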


\begin{rem}
The Corollary \ref{pse Besov} contains the following known boundedness results of $a(x,D)$ with $a \in S^{m}_{\rho , \delta}$.

\noindent$({\rm{i}})$ By taking $\omega =1$, $p=q$ and suitable $s$ in neighborhood of $p$ in $({\rm{i}})$ of Corollary \ref{pse Besov}, we have the $L^p$-boundedness with $m<-n(1-\rho)|1/p-1/2|$ which was established by Fefferman {\rm \cite{F}}.

\noindent$({\rm{ii}})$ By taking $p=q$ and sufficiently large $s$ in $({\rm{i}})$ of Corollary \ref{pse Besov}, we have the $L^p(\omega)$-boundedness with $m=-n(1-\rho)/2$ and $\omega \in  A_{p/2} $ which was established by Chanillo and Torchinsky {\rm \cite{C-T}}.

\noindent$({\rm{iii}})$ By taking $r=1$ and $p=q$ in $({\rm{ii}})$ of Corollary \ref{pse Besov}, we have the $L^p(\omega)$-boundedness with $m=-n(1-\rho)$ and $\omega  \in A_{p}$ which was established by Michalowski, Rule and Staubach {\rm \cite{M-R-S}}.

\end{rem}

\begin{proof}
First, we assume $1 \le \sigma <\infty$. For any $ \ell \in  \Z$, there exists $b_{\ell} \in S^{m+\ell}_{\rho , \delta}$ so that $b(x,D)= {\lan D\ran }^\ell a(x,D)$ since $\delta <\rho$. From this, we have
\begin{eqnarray*}
{| \lan \phi_k * a(x,D)(\phi_j *f) , g \ran|} &=&  |\lan {\lan D \ran}^{-\ell} \phi_k * {\lan D \ran}^{\ell} a(x,D) (\phi_j * f) , g \ran | \\
&=&  |  \lan b_{\ell} (x,D) f ,  {\lan D  \ran }^{-\ell} \phi_k(-\cdot) *g \ran | \\
&\lesssim&  2^{j\ell+ j {\tilde{\kappa}}(p,q)} \liminf_{|R| \to \infty}  {\Lambda}^{\alpha}_{r(p,q),s(p,q)}((\phi_j*f)1_{Q_R} , {\lan D  \ran }^{-\ell}  \phi_k(-\cdot) *g ) \\
&\lesssim& 2^{j\ell+ j {\tilde{\kappa}}(p,q) } {||\phi_j *f||}_{L^q(\omega^q)} {|| {\lan D  \ran }^{-\ell}  \phi_k(-\cdot) *g  ||}_{L^{p'}(\omega^{-p'} )},
\end{eqnarray*}
where 
\[
({\tilde{\kappa}}(p,q),r(p,q),s(p,q))=\begin{cases}
({\tilde{\kappa}}_1,2,s') & 2<q \le p <\infty  \\
({\tilde{\kappa}}_2, r , r) & 1<q \le 2 \le p \le q' <\infty
\end{cases},
\]
and $\alpha= 1/p' + 1/q$. After this, we write ${\tilde{\kappa}}= {\tilde{\kappa}}(p,q)$. Now, we obtain
\begin{eqnarray*}
 {\lan D  \ran }^\ell \phi_k(-\cdot) *g  (x) \lesssim {||{\lan D\ran}^{-\ell} {\phi}_k||}_{L^1} Mg(x) \lesssim 2^{-k \ell }Mf(x).
\end{eqnarray*} 
Combining this and ${\omega}^{-p'} \in A_{p'}$, we obtain
\begin{eqnarray*}
{||\phi_k *a(x,D)(\phi_j *f) ||}_{L^p(\omega^p) } \lesssim 2^{-k \ell}  2^{j\ell+ j {\tilde{\kappa}} } {||\phi_j *f||}_{L^q(\omega^q)}.
\end{eqnarray*}
The Besov norm of $a(x,D)$ is handled by
\begin{eqnarray*}
&&I_1+I_2 \\
&:=& {\left( \sum_{k \ge 0} 2^{k \kappa \sigma} { \left( \sum_{ 0 \le j \le k} {|| \phi_k *a(x,D)(\phi_j *f) ||}_{L^p(\omega^p)}\right) }^{\sigma} \right)}^{1/\sigma} + {\left( \sum_{k \ge 0} 2^{k \kappa \sigma} { \left( \sum_{ k< j } {|| \phi_k *a(x,D)(\phi_j *f) ||}_{L^p(\omega^p)}\right) }^{\sigma} \right)}^{1/\sigma}.
\end{eqnarray*}
Our purpose is to control $I_1$ and $I_2$ by ${||f||}_{B^{\kappa +\tilde{\kappa} }_{q,\sigma }( \omega^q)}$. First, we give an estimation of $I_1$. From the observation above, we obtain
\begin{eqnarray*}
I_1&\lesssim&  {\left( \sum_{k \ge 0} 2^{k \kappa \sigma -k \ell \sigma } { \left( \sum_{ 0 \le j \le k} {2^{j \ell +j \tkappa}|| \phi_j *f||}_{L^q(\omega^q)}\right) }^{\sigma} \right)}^{1/\sigma} \\
&\le&  {\left( \sum_{k \ge 0} 2^{k \kappa \sigma -k \ell \sigma } { \left( \sum_{-k \le  j \le 0} {2^{(j+k) \ell +(j+k) \tkappa}|| \phi_{j+k} *f||}_{L^q(\omega^q)}\right) }^{\sigma} \right)}^{1/\sigma} \\
&\le&  \sum_{j \le 0} 2^{j \ell+ j \tkappa} {\left( \sum_{k \ge -j } 2^{k(\kappa + \tkappa)\sigma }  {|| \phi_{j+k} *f||}^{\sigma}_{L^q(\omega^q)}  \right)}^{1/\sigma} \\
&=&  \sum_{j \le 0} 2^{j \ell+ j \tkappa-j(\kappa+\tkappa)} {\left( \sum_{k \ge 0 } 2^{k(\kappa + \tkappa)\sigma }  {|| \phi_{k} *f||}^{\sigma}_{L^q(\omega^q)}  \right)}^{1/\sigma} \\
&\lesssim& {||f||}_{B^{\kappa +\tkappa}_{q,\sigma}(\omega^q)}
\end{eqnarray*}
by taking sufficiently large $\ell$. On the other hands, the same calculation gives 
\begin{eqnarray*}
I_2&\lesssim&  {\left( \sum_{k \ge 0} 2^{k \kappa \sigma -k \ell \sigma } { \left( \sum_{ k<j } {2^{j \ell +j \tkappa}|| \phi_j *f||}_{L^q(\omega^q)}\right) }^{\sigma} \right)}^{1/\sigma} \\
&\le&  {\left( \sum_{k \ge 0} 2^{k \kappa \sigma -k \ell \sigma } { \left( \sum_{ j>0 } {2^{(j+k) \ell +(j+k) \tkappa}|| \phi_{j+k} *f||}_{L^q(\omega^q)}\right) }^{\sigma} \right)}^{1/\sigma} \\
&\le&  \sum_{j > 0} 2^{j \ell+ j \tkappa} {\left( \sum_{k \ge 0 } 2^{k(\kappa + \tkappa)\sigma }  {|| \phi_{j+k} *f||}^{\sigma}_{L^q(\omega^q)}  \right)}^{1/\sigma} \\
&=&  \sum_{j > 0} 2^{j \ell+ j \tkappa-j(\kappa+\tkappa)} {\left( \sum_{k \ge 0 } 2^{k(\kappa + \tkappa)\sigma }  {|| \phi_{k} *f||}^{\sigma}_{L^q(\omega^q)}  \right)}^{1/\sigma} \\
&\lesssim& {||f||}_{B^{\kappa +\tkappa}_{q,\sigma}(\omega^q)}
\end{eqnarray*}
by taking $\ell \ll -1$. Hence, we complete the proof in the case of $1\le \sigma <\infty$. To complete the proof, we treat the case of $0<\sigma <1$. However, this case is proved in a same manner by using $\sigma$-triangle inequality on $\ell^{\sigma}$.
\end{proof}

\subsection{The special case of pseudodifferential operators}
For a given $-1\le \rho<1$, $U_{\rho}f$ denotes the solution of
\begin{eqnarray*}
\left\{
\begin{array}{ll}
i{\partial}_{t}u+{(-\Delta) }^{(1-\rho)/2} u=  0, \\
u(0)=f.\\
\end{array}
\right.
\end{eqnarray*} 
$U_{\rho}$ with $0\le \rho <1$ can be regarded as a pseudodifferential operators associated $S^{0}_{\rho,0}$, and therefore gives sparse bounds in Theorem \ref{sparseform1} and Theorem \ref{sparseform2}. However, we can improve the above results:
\begin{thm}
\label{sparseform3}
Let $1\le r \le 2 $ and $-1\le \rho <1$.

\noindent$({\rm{i}})$ Given $\rho \neq 0$, $ 1/r+1/2 < 1/ \alpha < 2/r$ and $f ,g \in \sh $, there exist the sequence of sparse families ${\{\Sp_j \}}_{j =0,1,\cdots}$ such that
\begin{eqnarray*}
|\lan U_{\rho} f(t) , g \ran | \lesssim t^{n(1/r+1/2-(1/\alpha-1))}  \liminf_{R \to \infty} \sum_{j \ge 0} 2^{j \kappa_4 } {\Lambda}^{\alpha}_{ \Sp_j ,r,r}( (\phi_j *f)1_{Q_R},g),
\end{eqnarray*}
where $\kappa_4= n(1- \rho)(1/r-1/2)+ \rho n (1/\alpha -1)$.

\noindent$({\rm{ii}})$ Given $\alpha \in \R $ such that
\begin{eqnarray*}
\frac{n+1}{rn} + \frac{n-1}{2n} < \frac{1}{\alpha} < \frac{2}{r},
\end{eqnarray*}
and $f ,g \in \sh $, there exist the sequence of sparse families ${\{\Sp_j \}}_{j =0,1,\cdots}$ such that
\begin{eqnarray*}
|\lan U_{0} f(t) , g \ran | \lesssim t^{(n+1)(1/r-1/2)-n(1/\alpha-1)}  \liminf_{R \to \infty} \sum_{j \ge 0} 2^{j \kappa_5 } {\Lambda}^{\alpha}_{ \Sp_j ,r,r}( (\phi_j *f)1_{Q_R},g),
\end{eqnarray*}
where $\kappa_5= (n+1)(1/r-1/2)$.
\end{thm}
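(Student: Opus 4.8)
The plan is to reuse the architecture of the proofs of Theorem \ref{sparseform1} and Theorem \ref{sparseform2} verbatim, replacing the crude input ``$U_\rho(t)\in S^0_{\rho,0}$'' by the genuine stationary–phase estimates for the kernel of $U_\rho(t)$, which a generic symbol in $S^0_{\rho,0}$ does not enjoy and which is exactly what produces the gain in the $2^j$–exponent. As in Theorem \ref{sparseform1}, write $\phi_j*f=(\phi_{j-1}+\phi_j+\phi_{j+1})*\phi_j*f$, decompose $U_\rho(t)=\sum_{j\ge0}U_{\rho,j}(t)$ with $U_{\rho,j}(t)$ carrying the symbol $e^{it|\xi|^{1-\rho}}\hphi_j(\xi)$, and use the $\liminf_{R\to\infty}$ truncation with $(\phi_j*f)1_{Q_R}$; it then suffices to prove the single–frequency bound
\[
|\lan U_{\rho,j}(t)f,g\ran|\lesssim t^{P}2^{j\kappa}\,\Lambda^{\alpha}_{\Sp_j,r,r}(f,g),\qquad f\in L^\infty_c,\ g\in\sh,
\]
with $P$ and $\kappa$ independent of $j$ and equal to the exponents in the statement, and then to sum over $j$ keeping the weights $2^{j\kappa}$.

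Two ingredients feed this. First, $U_{\rho,j}(t)$ is (up to a constant) an $L^2$–isometry, so $\|U_{\rho,j}(t)\|_{L^2\to L^2}\lesssim1$. Second, for $K_j(t,w)=\int e^{iw\xi+it|\xi|^{1-\rho}}\hphi_j(\xi)\,d\xi$ I would rescale $\xi=2^j\eta$ and apply stationary phase with large parameter $\lambda=t2^{j(1-\rho)}$. When $\rho\neq0$ the Hessian of $|\eta|^{1-\rho}$ on $|\eta|\sim1$ is non‑degenerate, giving $|K_j(t,w)|\lesssim 2^{jn}(1+t2^{j(1-\rho)})^{-n/2}$ with the mass concentrated on the shell $|w|\sim t2^{-j\rho}$ and rapid decay off it (from $|\nabla_\xi(\text{phase})|\gtrsim|w|$ there); when $\rho=0$ the Hessian of $|\eta|$ has rank $n-1$ (radial degeneracy), so van der Corput yields only $|K_j(t,w)|\lesssim 2^{jn}(1+t2^{j})^{-(n-1)/2}$, with mass on $|w|\sim t$. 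From the amplitude together with the off‑shell decay I would then obtain, exactly as in the proof of Theorem \ref{sparseform2}, a pointwise bound for the tail maximal operator
\[
M_{U_{\rho,j}(t),\infty}f(x)\lesssim C_1\,M^{\gamma}f(x),\qquad 0\le\gamma<1,
\]
with $C_1$ equal to the amplitude times the $n(1-\gamma)$‑th power of the effective radius ($t2^{-j\rho}$ in case (i), $t$ in case (ii)); a direct computation gives $C_1\sim 2^{jn[(1-\rho)/2+\rho\gamma]}t^{n(1/2-\gamma)}$ in case (i) and $C_1\sim 2^{j(n+1)/2}t^{(n+1)/2-n\gamma}$ in case (ii) in the dispersive regime $t2^{j(1-\rho)}\gtrsim1$ (with the cleaner $2^{jn\gamma}$ otherwise), alongside the companion bounds for $\|U_{\rho,j}(t)\|_{L^1\to L^1}$ and $\|U_{\rho,j}(t)\|_{L^r\to L^{p,\infty}}$ required by Proposition \ref{sp}.

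Third, I would feed these into Lemma \ref{inter3} (the interpolation lemma underlying Theorem \ref{sparseform2}) with $C_0\sim1$ and $C_1$ as above, choosing $\gamma$ by $1/\alpha=1+(2/r-1)\gamma$; this gives $|\lan U_{\rho,j}(t)f,g\ran|\lesssim(\|U_{\rho,j}(t)\|_{L^r\to L^{p,\infty}}+C_1^{\theta})\Lambda^{\alpha}_{\Sp_j,r,r}(f,g)$ with $\theta=2/r-1$, and substituting the explicit $C_1$ together with the $L^1$–$L^1$ and $L^r$–$L^{p,\infty}$ bounds collapses the $2^j$–exponent to $\kappa_4$ (resp.\ $\kappa_5$) and the $t$–exponent to the value claimed. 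The restrictions $1/r+1/2<1/\alpha<2/r$ in (i) and $\tfrac{n+1}{rn}+\tfrac{n-1}{2n}<1/\alpha<2/r$ in (ii) are, on unwinding $\gamma=(1/\alpha-1)/(2/r-1)$, exactly $\gamma\in(1/2,1)$ and $\gamma\in(\tfrac{n+1}{2n},1)$, the ranges for which this bookkeeping stays consistent and $M^{\gamma}$ is of the required weak type; summing over $j$ (and carrying along the $\liminf_R$ from the first step) finishes the proof.

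The main obstacle is to get the two single–frequency inputs with \emph{no} $\varepsilon$–loss: the kernel estimate must come with the sharp off‑shell rapid decay, not merely the amplitude bound, because it is that decay which turns $M_{U_{\rho,j}(t),\infty}$ into $M^{\gamma}$ with the correct $\gamma$; and in case (ii) one must check that the radial degeneracy of the Hessian of $|\xi|$ does not destroy this off‑shell decay — this is precisely where the passage from the $n/2$ to the $(n+1)/2$ exponent, hence from $\kappa_4$ to $\kappa_5$, originates. A secondary difficulty is that, unlike in Theorem \ref{sparseform2}, $U_{\rho,j}(t)$ is \emph{not} pointwise dominated by $U_{\rho,j}(t)(|\cdot|)$, so Corollary \ref{inter2} is unavailable; one must route through Lemma \ref{inter3} (or run its three‑lines proof directly with the dispersive endpoints), being careful that the $L^r\to L^{p,\infty}$ term appearing there is itself controlled by the dispersive and $L^1$–$L^1$ bounds.
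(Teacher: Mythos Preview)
Your proposal is correct and follows essentially the same route as the paper: reduce to single-frequency bounds via the Littlewood--Paley decomposition, prove the key pointwise estimate $M_{U_{\rho,j}(t),\infty}f\lesssim C_1\,M^{\gamma}f$ with exactly the constants you wrote (obtained in the paper by integration by parts at scale $t2^{-j\rho}$, resp.\ $t$, combined with Littman's lemma, with the rank drop of the Hessian at $\rho=0$ accounting for the shift from $n/2$ to $(n-1)/2$), feed this together with the $L^2$--$L^2$ bound into Lemma \ref{inter3}, and translate the resulting $\gamma$-range into the stated $\alpha$-range. The paper also splits off the non-dispersive regime $t2^{j(1-\rho)}\lesssim1$ and the piece $j=0$ (handled via $\phi_0=\sum_{\ell\le0}\psi_\ell$), which you allude to; these are routine and fit your outline.
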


\begin{proof}
\noindent$({\rm{i}})$ It suffices to prove the pointwise estimate
\begin{eqnarray*}
 M_{U_{\rho,j}, \infty} f(t,x) \lesssim t^{-n(\gamma -1/2)}   2^{jn(1-\rho)/2 +jn \rho \gamma } M^{\gamma} f(x)
\end{eqnarray*}
for any $1/2< \gamma< 1$ where $U_{\rho,j}f=U_{\rho}(\phi_j*f)$. Take any cube $Q$ and any $x, z \in Q$. First, we consider the case $ j \ge 1$ and $2^{-j(1-\rho)}  \le t$. We integrate by parts $N \in \N$ times to obtain
\begin{eqnarray*}
 |U_{\rho,j} (f1_{{(3Q)}^c } )(t,z)| &\lesssim& \int_{ {(3Q)}^c} {(1+2^{2j\rho N } {t}^{-2N}{|z-y|}^{2N})}^{-1} |f(y)| \left| \int e^{i(z-y)\xi } { (1+2^{2j\rho N} {t}^{-2N} \Delta^N )} (e^{it{|\xi|}^{1-\rho} } \hphi_j(\xi))d\xi \right| dy  \\
&\lesssim& 2^{-j\rho n (1-\gamma) }{t}^{n(1-\gamma)} M^{\gamma}f(x) \left| \int e^{i(z-y)\xi } { (1+2^{2j\rho N} {t}^{-2N} \Delta^N )} (e^{it{|\xi|}^{1-\rho} } \hphi_j(\xi))d\xi dy \right|  . \\
\end{eqnarray*}
To obtain desired pointwise estimate,  we need to prove
\begin{eqnarray*}
\sup_{w \in \Rn} \left| \int e^{i w \xi } { (1+2^{2j\rho N} {t}^{-2N} \Delta^N )} (e^{it{|\xi|}^{1-\rho} } \phi_j(\xi))d\xi dy \right| \lesssim 2^{jn(1+\rho)/2 } t^{-n/2}.
\end{eqnarray*}
By the Leibniz formula, we have 
\begin{eqnarray*}
 \Delta^N ((e^{it{|\xi|}^{1-\rho} } \hphi_j(\xi)) &=& \sum_{|\alpha|=2N } \sum_{\beta \le \alpha} (\partial^{\alpha -\beta} e^{it{|\xi|}^{1-\rho} })(\partial^{\beta} \hphi_j(\xi)) \\
 &=&e^{it{|\xi|}^{1-\rho} } \sum_{|\alpha|=2N } \sum_{\beta \le \alpha} (P_{\alpha ,\beta}(\xi) )(\partial^{\beta} \hphi_j(\xi) ), 
\end{eqnarray*}
where $P_{\alpha, \beta}$ denotes the functions such that
\begin{eqnarray*}
{||\partial^{\sigma }P_{\alpha, \beta}(2^j \cdot)||}_{L^{\infty} } \lesssim{ (2^{-j}+t{2}^{-j\rho})}^{2N- |\beta|} 
\end{eqnarray*} 
on support of $\hat{\psi}$ for any $\sigma \in \N^n$. By using Littman's lemma, we have
\begin{eqnarray*}
&&\left| \int e^{i w \xi } { (1+2^{2j\rho N} {t}^{-2N} \Delta^N )} (e^{it{|\xi|}^{1-\rho} } \hphi_j(\xi))d\xi dy \right|\\
&\lesssim& \left| \int e^{iw \xi +it{|\xi|}^{1-\rho} }\hphi_j(\xi)d\xi \right| +  2^{2j\rho N} {t}^{-2N} \sum_{|\alpha|=2N } \sum_{\beta \le \alpha} \left| \int e^{iw\xi+ it{|\xi|}^{1-\rho} }P_{\alpha.\beta} (\xi) \partial^{\beta } \hphi_j(\xi)d\xi \right| \\
&\lesssim& 2^{jn} \left| \int e^{iw \xi +it2^{j(1-\rho)}{|\xi|}^{1-\rho} }\hat{\psi}(\xi)d\xi \right| +  2^{jn+ 2j\rho N} {t}^{-2N} \sum_{|\alpha|=2N }\left|  \sum_{\beta \le \alpha} 2^{- j |\beta| }  \int e^{iw\xi+ it2^{j(1-\rho)} {|\xi|}^{1-\rho} }P_{\alpha.\beta} (2^j \xi) \partial^{\beta} \hat{\psi}(\xi)d\xi \right|  \\
&\lesssim&2^{jn(1+\rho)/2 } t^{-n/2}+  2^{jn(1+\rho)/2+ 2j\rho N} {t}^{-n/2-2N}  \sum_{|\alpha|=2N } \sum_{\beta \le \alpha} 2^{- j |\beta|}  {({2}^{-j}+t{2}^{-j\rho})}^{2N- |\beta|} \\
&\lesssim& 2^{jn(1+\rho)/2 } t^{-n/2}.
\end{eqnarray*}
Here, the last inequality follows from
\begin{eqnarray*}
\sum_{|\alpha|=2N } \sum_{\beta \le \alpha} 2^{- j |\beta| }  {({2}^{-j}+t{2}^{-j\rho})}^{2N- |\beta|} \lesssim t^{2N } 2^{-2j \rho N} \sum_{|\alpha|=2N } \sum_{\beta \le \alpha}  t^{- |\beta|}  2^{-j |\beta| + j\rho  |\beta| }\lesssim t^{2N } 2^{-2j \rho N} .
\end{eqnarray*}
The case of $j \ge 1$ and $2^{j(1-\rho) } \le t^{-1}$ is obtained from
\begin{eqnarray*}
 |U_{\rho,j} (f1_{{(3Q)}^c } )(t,z)| &\lesssim& \int_{ {(3Q)}^c} {(1+2^{2j N } {|z-y|}^{2N})}^{-1} |f(y)| \left| \int e^{i(z-y)\xi } { (1+2^{2j N}  \Delta^N )} (e^{it{|\xi|}^{1-\rho} } \hphi_j(\xi))d\xi\right| dy  \\
&\lesssim& 2^{-j  n (1-\gamma) } M^{\gamma}f(x) \left| \int e^{i(z-y)\xi } { (1+2^{2jN}  \Delta^N )} (e^{it{|\xi|}^{1-\rho} } \hphi_j(\xi))d\xi dy \right|  \\
&\lesssim&2^{jn \gamma} M^{\gamma}f(x) \\
&\le&t^{-n(\gamma -1/2)}   2^{jn(1-\rho)/2 +jn \rho \gamma } M^{\gamma} f(x).
\end{eqnarray*}
Here, we use the condition $\gamma  \ge 1/2$ to obtain
\begin{eqnarray*}
2^{jn\gamma} = 2^{jn(1-\rho)/2 +jn \rho \gamma } 2^{jn(1-\rho)(\gamma -1/2)} \le t^{-n(\gamma -1/2)}   2^{jn(1-\rho)/2 +jn \rho \gamma }.
\end{eqnarray*}
When $j =0$, we recall $\phi_0 =\sum_{\ell\le 0} \psi_{\ell}$ and obtain
\begin{eqnarray*}
 |U_{\rho,0} (f1_{{(3Q)}^c } )(t,z)| &\le& \sum_{\ell \le 0} \left| \int e^{i(z-y)\xi }  {\hat{\psi}}_{\ell} (\xi)f(y)1_{{(3Q)}^c} (y) dyd\xi \right|\\
&\lesssim& \sum_{ \ell \le 0} \int_{ {(3Q)}^c} {(1+2^{2\ell N } {\tau}^{-2N}{|z-y|}^{2N})}^{-1} |f(y)| \int e^{i(z-y)\xi } { (1+2^{2\ell N} {\tau}^{-2N} \Delta^N )} (e^{it{|\xi|}^{1-\rho} } {\hat{\psi}}_{\ell}(\xi))d\xi dy  \\
&\lesssim& \sum_{\ell \le 0} 2^{-\ell n (1-\gamma) }{\tau}^{n(1-\gamma)} M^{\gamma}f(x) \left| \int e^{i(z-y)\xi } { (1+2^{2\ell N} {\tau}^{-2N} \Delta^N )} (e^{it{|\xi|}^{1-\rho} } {\hat{\psi}}_{\ell}(\xi))d\xi dy \right|  , \\
\end{eqnarray*}
where $\tau= \max\{1,t\}$. Since ${||\partial^{\sigma} P_{\alpha, \beta} (2^{\ell} \cdot ) ||}_{L^{\infty} } \lesssim {\tau}^{|\alpha| -|\beta|} 2^{-\ell(|\alpha|- |\beta|)}$, one has
\begin{eqnarray*}
|U_{\rho,0} (f1_{{(3Q)}^c } )(t,z)| &\lesssim&  \left(\sum_{ \ell \le 0} 2^{\ell n (\gamma -1/2)} \right) {\tau}^{n(1-\gamma)-n/2} M^{\gamma}f(x)  \\
&\lesssim& t^{-n(\gamma -1/2)}  M^{\gamma} f(x).
\end{eqnarray*}

 \ 
 
\noindent$({\rm{ii}})$ It suffices to prove the pointwise estimate
\begin{eqnarray*}
 M_{U_{0,j}, \infty} f(t,x) \lesssim t^{-n(\gamma -1/2)+1/2}  2^{j(n+1)/2 }  M^{\gamma} f(x)
\end{eqnarray*}
for any $  (n+1)/2n < \gamma< 1$. Take any cube $Q$ and any $x, z \in Q$. First, we consider the case $ j \ge 1$ and $2^{-j}  \le t$. We integrate by parts $N \in \N$ times to obtain
\begin{eqnarray*}
 |U_{0,j} (f1_{{(3Q)}^c } )(t,z)|  \lesssim {t}^{n(1-\gamma)} M^{\gamma}f(x) \left| \int e^{i(z-y)\xi } { (1+ {t}^{-2N} \Delta^N )} (e^{it{|\xi|} } \hphi_j(\xi))d\xi dy \right|.  \\
\end{eqnarray*}
By using Littman's lemma, we have
\begin{eqnarray*}
&&\left| \int e^{i w \xi } { (1+{t}^{-2N} \Delta^N )} (e^{it{|\xi|} } \hphi_j(\xi))d\xi dy \right|\\
&\lesssim& \left| \int e^{iw \xi +it{|\xi| } }\hphi_j(\xi)d\xi \right| +   {t}^{-2N} \sum_{|\alpha|=2N } \sum_{\beta \le \alpha} \left| \int e^{iw\xi+ it{|\xi|}  }P_{\alpha.\beta} (\xi) \partial^{\beta } \hphi_j(\xi)d\xi \right| \\
&\lesssim& 2^{jn} \left| \int e^{iw \xi +it2^j {|\xi|} }\hat{\psi}(\xi)d\xi \right| +  2^{jn} {t}^{-2N} \sum_{|\alpha|=2N }\left|  \sum_{\beta \le \alpha} 2^{- j |\beta| }  \int e^{iw\xi+ it2^j {|\xi|} }P_{\alpha.\beta} (2^j \xi) \partial^{\beta} \hat{\psi}(\xi)d\xi \right|  \\
&\lesssim&2^{j(n+1)/2 } t^{-(n-1)/2}+  2^{j(n+1)/2} {t}^{-(n-1)/2-2N}  \sum_{|\alpha|=2N } \sum_{\beta \le \alpha} 2^{- j |\beta|}  {({2}^{-j}+t)}^{2N- |\beta|} \\
&\lesssim& 2^{j(n+1)/2 } t^{-(n-1)/2}
\end{eqnarray*}
for any $ w \in \Rn$. The case of $j \ge 1$ and $2^{j } \le t^{-1}$ is obtained from
\begin{eqnarray*}
 |U_{0,j} (f1_{{(3Q)}^c } )(t,z)| &\lesssim& \int_{ {(3Q)}^c} {(1+2^{2j N } {|z-y|}^{2N})}^{-1} |f(y)| \left| \int e^{i(z-y)\xi } { (1+2^{2j N}  \Delta^N )} (e^{it{|\xi|} } \hphi_j(\xi))d\xi\right| dy  \\
&\lesssim& 2^{-j  n (1-\gamma) } M^{\gamma}f(x) \left| \int e^{i(z-y)\xi } { (1+2^{2jN}  \Delta^N )} (e^{it{|\xi|} } \hphi_j(\xi))d\xi dy \right|  \\
&\lesssim&2^{jn \gamma} M^{\gamma}f(x) \\
&\le&  t^{-n(\gamma -1/2)+1/2}  2^{j(n+1)/2 }  M^{\gamma} f(x).
\end{eqnarray*}
Here, we use the condition $\gamma  \ge (n+1)/2n$ to obtain
\begin{eqnarray*}
2^{jn\gamma} =2^{-j(n+1)/2 +jn \gamma} 2^{j(n+1)/2  }  \le  t^{-n(\gamma -1/2)+1/2}  2^{j(n+1)/2 }   .
\end{eqnarray*}
When $j =0$, we recall $\phi_0 =\sum_{\ell\le 0} \psi_{\ell} $ and obtain
\begin{eqnarray*}
 |U_{0,0} (f1_{{(3Q)}^c } )(t,z)| &\le& \sum_{\ell \le 0} \left| \int e^{i(z-y)\xi } a(z,\xi) {\hat{\psi}}_{\ell} (\xi)f(y)1_{{(3Q)}^c} (y) dyd\xi \right|\\
&\lesssim& \sum_{ \ell \le 0} \int_{ {(3Q)}^c} {(1+2^{2\ell N } {\tau}^{-2N}{|z-y|}^{2N})}^{-1} |f(y)| \int e^{i(z-y)\xi } { (1+2^{2\ell N} {\tau}^{-2N} \Delta^N )} (e^{it{|\xi|} } {\hat{\psi}}_{\ell}(\xi))d\xi dy  \\
&\lesssim& \sum_{\ell \le 0} 2^{-\ell n (1-\gamma) }{\tau}^{n(1-\gamma)} M^{\gamma}f(x) \left| \int e^{i(z-y)\xi } { (1+2^{2\ell N} {\tau}^{-2N} \Delta^N )} (e^{it{|\xi|}  } {\hat{\psi}}_{\ell}(\xi))d\xi dy \right|  , \\
\end{eqnarray*}
where $\tau= \max\{1,t\}$. Since ${||\partial^{\sigma} P_{\alpha, \beta} (2^{\ell} \cdot ) ||}_{L^{\infty} } \lesssim {\tau}^{|\alpha| -|\beta|} 2^{-\ell(|\alpha|-|\beta|  )}$, one has
\begin{eqnarray*}
|U_{0,0} (f1_{{(3Q)}^c } )(t,z)| &\lesssim&  \left(\sum_{ \ell \le 0} 2^{\ell n (\gamma -(n+1)/2n)} \right) {\tau}^{n(1-\gamma)-(n+1)/2} M^{\gamma}f(x)  \\
&\lesssim&   t^{-n(\gamma -1/2)+1/2}M^{\gamma} f(x).
\end{eqnarray*}

\end{proof}

Theorem \ref{sparseform3} and Proposition \ref{weight} give the boundness of $U_{\rho}$ on weighted Besov spaces.
\begin{cor}
\label{weight sol}
Let $ -1 \le \rho <1$, $ 1<q \le 2 \le p \le q' <\infty$ and $ {\omega}^q \in A_{q/r} \cap RH_{(p/q)(r'/p)'}$ with some $r\in[1,q)$. 
\noindent$({\rm{i}})$ If $\rho \neq 0$ and 
\begin{eqnarray*}
\frac{1}{r} -\frac{1}{2} \le \frac{1}{q} -\frac{1}{p},
\end{eqnarray*}
then for any $\kappa  \in \R$ and $0<\sigma \le \infty$, $U_{\rho}(t)$ be a bounded operators from $B^{\kappa+\tkappa_4}_{q,\sigma}(\omega^q)$ to $B^{\kappa}_{p,\sigma}(\omega^p)$ where 
\begin{eqnarray*}
\tkappa_4 =n(1- \rho)\left( \frac{1}{r}- \frac{1}{2} \right)+ \rho n \left( \frac{1}{q} -\frac{1}{p}\right).
\end{eqnarray*}
Furthermore, we have
\begin{eqnarray*}
{||U_{\rho}(t)||}_{B^{\kappa }_{q,\sigma}(\omega^q) \to B^{\kappa +\tkappa_4 }_{p,\sigma}(\omega^p)} \lesssim t^{-n((1/q-1/p)-(1/r -1/2))}  {({[{\omega}^q]}_{A_{q/r}} {[ {\omega}^q] }_{{RH}_{(p/q)( r'/p)' }})}^{\delta} ,
\end{eqnarray*}
with $\delta $ in Proposition \ref{weight}.

\noindent$({\rm{ii}})$ If
\begin{eqnarray*}
\frac{n+1}{n}\left( \frac{1}{r} -\frac{1}{2} \right) \le \frac{1}{q} -\frac{1}{p},
\end{eqnarray*}
then for any $\kappa  \in \R$ and $0<\sigma \le \infty$, $U_{\rho}(t)$ be a bounded operators from $B^{\kappa+\tkappa_5}_{q,\sigma}(\omega^q)$ to $B^{\kappa}_{p,\sigma}(\omega^p)$ where
\begin{eqnarray*}
\tkappa_5 =(n+1)\left( \frac{1}{r}- \frac{1}{2} \right).
\end{eqnarray*}
Furthermore, we have
\begin{eqnarray*}
{||U_{\rho}(t)||}_{B^{\kappa }_{q,\sigma}(\omega^q) \to B^{\kappa +\tkappa_5 }_{p,\sigma}(\omega^p)} \lesssim t^{-\{n(1/q-1/p) -(n+1)(1/r-1/2) \}}   {({[{\omega}^q]}_{A_{q/r}} {[ {\omega}^q] }_{{RH}_{(p/q)( r'/p)' }})}^{\delta} .
\end{eqnarray*}
\end{cor}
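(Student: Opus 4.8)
The strategy is to derive Corollary \ref{weight sol} from Theorem \ref{sparseform3} and Proposition \ref{weight}, following the scheme of the proof of Corollary \ref{pse Besov}; the argument is shorter here because $U_{\rho}(t)=e^{it{(-\Delta)}^{(1-\rho)/2}}$ is a Fourier multiplier. Since $\sh$ is dense in every $B^{\kappa}_{p,\sigma}(\omega)$ (Bui \cite{Bu}) and the estimates below are uniform, it suffices to take $f\in\sh$. Throughout write $C_{\omega}:=\bigl([\omega^q]_{A_{q/r}}[\omega^q]_{RH_{(p/q)(r'/p)'}}\bigr)^{\delta}$. The key step is the choice of the exponent
\[
\frac{1}{\alpha}=\frac{1}{p'}+\frac{1}{q}.
\]
This is forced from two directions: it is exactly the relation under which Proposition \ref{weight} applies (with $s=r'$, so that $\Lam^{\alpha}_{\Sp_j,r,r}$ is its $(r,s')$-form), and it makes $\rho n(1/\alpha-1)=\rho n(1/q-1/p)$, so that $\kappa_4$ in Theorem \ref{sparseform3}(i) equals the shift $\tkappa_4$ (and, in the $U_0$ case, $\kappa_5=\tkappa_5$), while the time exponent delivered by Theorem \ref{sparseform3} is exactly $t^{-n((1/q-1/p)-(1/r-1/2))}$ (resp. $t^{-\{n(1/q-1/p)-(n+1)(1/r-1/2)\}}$). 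One must check that this $\alpha$ is admissible: the lower bound $1/r+1/2<1/\alpha$ (resp. $\tfrac{n+1}{rn}+\tfrac{n-1}{2n}<1/\alpha$) is, after rearrangement, precisely the hypothesis $1/r-1/2<1/q-1/p$ (resp. $\tfrac{n+1}{n}(1/r-1/2)<1/q-1/p$), and $1/\alpha<2/r$ follows from $r<q\le p\le q'$. The boundary case (hypothesis with equality) is dealt with by a short limiting argument, perturbing $r$ downward inside $RH_{(p/q)(r'/p)'}$.

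Now the Littlewood--Paley estimate. Since $U_{\rho}(t)$ commutes with $\phi_k\ast$, for each $k\ge0$ one has $\phi_k\ast U_{\rho}(t)f=U_{\rho}(t)(\phi_k\ast f)$, and I dualize:
\[
\|\phi_k\ast U_{\rho}(t)f\|_{L^p(\omega^p)}=\sup\bigl\{\,|\lan U_{\rho}(t)(\phi_k\ast f),g\ran|\;:\;g\in\sh,\ \|g\|_{L^{p'}(\omega^{-p'})}\le1\,\bigr\}.
\]
Apply Theorem \ref{sparseform3} to $\lan U_{\rho}(t)(\phi_k\ast f),g\ran$ (so the function in the $j$-th sparse form is $\phi_j\ast\phi_k\ast f$, up to the harmless cut-off $1_{Q_R}$ and the $\liminf_R$), then Proposition \ref{weight} to each $\Lam^{\alpha}_{\Sp_j,r,r}$ --- legitimate because the sparse families produced lie in dyadic lattices and because $\omega^q\in A_{q/r}\cap RH_{(p/q)(r'/p)'}$ with $1/\alpha=1/p'+1/q$ and $r<q\le p<r'$. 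This gives
\[
|\lan U_{\rho}(t)(\phi_k\ast f),g\ran|\lesssim t^{-n((1/q-1/p)-(1/r-1/2))}\,C_{\omega}\sum_{j\ge0}2^{j\kappa_4}\,\|\phi_j\ast\phi_k\ast f\|_{L^q(\omega^q)}.
\]
The decisive simplification is that $\widehat{\phi_j}\widehat{\phi_k}\equiv0$ once $|j-k|$ exceeds an absolute constant, so the sum collapses to $O(1)$ terms with $j\sim k$; for these $2^{j\kappa_4}\sim2^{k\tkappa_4}$, and since $\omega^q\in A_{q/r}\subset A_q$ the maximal operator bounds $\|\phi_j\ast\phi_k\ast f\|_{L^q(\omega^q)}\lesssim\|M(\phi_k\ast f)\|_{L^q(\omega^q)}\lesssim\|\phi_k\ast f\|_{L^q(\omega^q)}$. (As in the proof of Corollary \ref{pse Besov}, the hypotheses also force $\omega^{-p'}\in A_{p'}$ and $\omega^p\in A_{\infty}$, which is what makes both Besov spaces and this dualization legitimate.) Hence
\[
\|\phi_k\ast U_{\rho}(t)f\|_{L^p(\omega^p)}\lesssim t^{-n((1/q-1/p)-(1/r-1/2))}\,C_{\omega}\,2^{k\tkappa_4}\,\|\phi_k\ast f\|_{L^q(\omega^q)}.
\]

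Multiplying by $2^{k\kappa\sigma}$ and taking the $\ell^{\sigma}$-norm over $k\ge0$ (the supremum if $\sigma=\infty$, the $\sigma$-triangle inequality on $\ell^{\sigma}$ if $0<\sigma<1$) gives at once
\[
\|U_{\rho}(t)f\|_{B^{\kappa}_{p,\sigma}(\omega^p)}\lesssim t^{-n((1/q-1/p)-(1/r-1/2))}\,C_{\omega}\,\|f\|_{B^{\kappa+\tkappa_4}_{q,\sigma}(\omega^q)},
\]
and the general case follows by density of $\sh$. Part (ii) (the case $\rho=0$) is proved in the same way, invoking Theorem \ref{sparseform3}(ii) instead of (i) and replacing $\tkappa_4$ by $\tkappa_5$. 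I expect the only real difficulty to be contained in the first paragraph: selecting $\alpha$ so that Theorem \ref{sparseform3} is applicable while simultaneously reproducing the advertised smoothness shift $\tkappa_4$ and the advertised power of $t$, and handling the equality case of the hypothesis. By contrast with Corollary \ref{pse Besov}, no Schur-type summation over frequency scales is required, precisely because $U_{\rho}(t)$ commutes with the Littlewood--Paley projections, so the double sum over $(j,k)$ degenerates to its diagonal.
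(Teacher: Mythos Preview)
Your approach is exactly the one the paper indicates (the paper gives no proof beyond the one-line remark that Theorem \ref{sparseform3} and Proposition \ref{weight} yield the result), and your observation that $U_{\rho}(t)$ commutes with $\phi_{k}\ast$ is the right simplification over the model proof of Corollary \ref{pse Besov}: it collapses the double sum over $(j,k)$ to its diagonal and spares you the whole $\langle D\rangle^{\ell}$ machinery. Two small points are worth flagging.

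First, your limiting argument for the equality case $\tfrac{1}{r}-\tfrac{1}{2}=\tfrac{1}{q}-\tfrac{1}{p}$ has the perturbation going the wrong way. If you take $r_{0}<r$ then $\tfrac{1}{r_{0}}+\tfrac{1}{2}>\tfrac{1}{\alpha}$, so Theorem \ref{sparseform3}(i) still fails to apply. The correct move is to perturb $r$ \emph{upward}: by the self-improving (open) properties of $A_{p}$ and $RH_{s}$ weights, $\omega^{q}\in A_{q/r}\cap RH_{(p/q)(r'/p)'}$ implies $\omega^{q}\in A_{q/r_{0}}\cap RH_{(p/q)(r_{0}'/p)'}$ for some $r_{0}>r$, and then $\tfrac{1}{r_{0}}+\tfrac{1}{2}<\tfrac{1}{\alpha}$ holds strictly. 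This yields the boundedness (with $\tilde\kappa_{4}(r_{0})\le\tilde\kappa_{4}$), though the quantitative dependence on the weight and on $t$ that you recover is then stated in terms of $r_{0}$ rather than $r$; the paper does not address this point either.

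Second, your step $\|\phi_{j}\ast\phi_{k}\ast f\|_{L^{q}(\omega^{q})}\lesssim\|M(\phi_{k}\ast f)\|_{L^{q}(\omega^{q})}\lesssim\|\phi_{k}\ast f\|_{L^{q}(\omega^{q})}$ silently introduces a factor of the form $[\omega^{q}]_{A_{q}}^{c}$, so the final operator norm is bounded by $C_{\omega}$ times an extra $A_{q}$-constant rather than by $C_{\omega}$ alone. This is the same phenomenon the paper points out for Corollary \ref{pse Besov} at the start of Appendix A (there the extraneous factor is $[\omega^{-p'}]_{A_{p'}}$, coming from the maximal bound on the $g$-side); the paper's remedy---proving sparse form bounds directly for $\phi_{k}\ast T(\phi_{j}\ast\cdot)$---would apply here as well and recover the cleaner constant stated in Corollary \ref{weight sol}.
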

 
  \ 
  
\subsection{A sharpness of weighted boundedness of pseudodifferential operators}

In previous sections and subsections, we obtain some weighted inequalities for pseudodifferential operators and the time evolution $U_{\rho}(t)$ of dispersive equations. In this subsection, we insure a sharpness of some of these inequalities as follows:
\begin{pro}
\label{sharp}
Let $1< q \le p \le q' <\infty$ and $\gamma \in [1,\infty)$, and $a(\xi)= e^{i {|\xi|}^{1-\rho}} {|\xi|}^m$ with $m \in \R$ and $0< \rho \le 1$. If we have $L^q( {|\cdot|}^{qs})$-$L^p({|\cdot|}^{ps})$ boundedness of $a(D)$ for any $s\in (-n/\gamma,0)$, then we have
\begin{eqnarray*}
m \le -n(1-\rho) \left(\frac{1}{2}-\frac{1}{p}\right)-\rho n\left(\frac{1}{q}-\frac{1}{p}\right)-\frac{n(1-\rho)}{\gamma}.
\end{eqnarray*}
In particular, if we have $L^q( {\omega}^{q})$-$L^p({\omega}^{p})$ boundedness of $a(D)$ with any $\omega ^q \in  RH_{(p/q)(r'/p)'}$ for some $ r \in [1,q)$, then we have
\begin{eqnarray*}
m \le -n(1-\rho) \left(\frac{1}{r}-\frac{1}{2}\right)-\rho n\left(\frac{1}{q}-\frac{1}{p}\right).
\end{eqnarray*}
\end{pro}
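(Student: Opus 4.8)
The plan is to establish the first (general) bound by testing the assumed boundedness against an explicit family of functions, and then to read off the second bound by specializing to power weights. Fix a nonzero nonnegative $\hat\psi\in C_c^\infty(\mathbb{R}^n)$ supported in $\{1\le|\xi|\le2\}$, and for $\lambda=2^j\to\infty$ define the ``focusing'' test function $f_\lambda$ by $\widehat{f_\lambda}(\xi)=e^{-i|\xi|^{1-\rho}}|\xi|^{-m}\hat\psi(\xi/\lambda)$. By construction $a(\xi)\,\widehat{f_\lambda}(\xi)=\hat\psi(\xi/\lambda)$, hence
\[
a(D)f_\lambda=\lambda^n\psi(\lambda\,\cdot),\qquad \psi=\mathcal{F}^{-1}\hat\psi,
\]
with $\psi(0)=(2\pi)^{-n}\!\int\hat\psi>0$; a change of variables then gives exactly $\|a(D)f_\lambda\|_{L^p(|x|^{ps})}=\lambda^{\,n-n/p-s}\,\||y|^s\psi\|_{L^p}$, which is $\asymp\lambda^{\,n-n/p-s}$ for $s>-n/p$ and $=+\infty$ for $s\le-n/p$.

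The heart of the matter is locating $f_\lambda$ itself. Writing $\xi=\lambda\eta$ exhibits $f_\lambda(x)$ as $c\,\lambda^{\,n-m}$ times the oscillatory integral $\int e^{\,i\lambda^{1-\rho}(\lambda^\rho x\cdot\eta-|\eta|^{1-\rho})}|\eta|^{-m}\hat\psi(\eta)\,d\eta$, with large parameter $\lambda^{1-\rho}$. The phase $\lambda^\rho x\cdot\eta-|\eta|^{1-\rho}$ has a critical point in $\operatorname{supp}\hat\psi$ exactly when $|x|\asymp\lambda^{-\rho}$, and it is nondegenerate there because the Hessian of $\eta\mapsto|\eta|^{1-\rho}$ has radial and angular eigenvalues comparable to $|\eta|^{-1-\rho}$, which are nonzero for $0<\rho<1$ (the case $\rho=1$ is trivial, the phase being constant). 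Stationary phase thus gives $|f_\lambda(x)|\lesssim\lambda^{\,n-m-(1-\rho)n/2}$ on $\{|x|\lesssim\lambda^{-\rho}\}$, while repeated integration by parts gives $|f_\lambda(x)|\le C_N\,\lambda^{\,n-m}(\lambda|x|)^{-N}$ on $\{|x|\gtrsim\lambda^{-\rho}\}$ for every $N$; integrating against $|x|^{qs}$, the region $|x|\asymp\lambda^{-\rho}$ being dominant once $N$ is large, yields for $s>-n/q$
\[
\|f_\lambda\|_{L^q(|x|^{qs})}\ \lesssim\ \lambda^{\,n-m-(1-\rho)n/2-\rho n/q-\rho s}.
\]

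Inserting the two displays into $\|a(D)f_\lambda\|_{L^p(|x|^{ps})}\le C\|f_\lambda\|_{L^q(|x|^{qs})}$ and letting $j\to\infty$ forces
\[
m\ \le\ \frac np-\frac{(1-\rho)n}{2}-\frac{\rho n}{q}+(1-\rho)s\ =:\ C(s)
\]
for each admissible $s$. Since $C$ is nondecreasing in $s$, one optimizes by letting $s\downarrow-n/\gamma$; as $\gamma\ge p\ge q$ in the situations of interest (in particular in the power‑weight reduction below), the endpoint $-n/\gamma$ lies in the range where the computation is valid, and $C(-n/\gamma)$ equals $-n(1-\rho)(\tfrac12-\tfrac1p)-\rho n(\tfrac1q-\tfrac1p)-\tfrac{n(1-\rho)}{\gamma}$ after the identity $\tfrac{(1-\rho)n}p+\tfrac{\rho n}p=\tfrac np$. (In the remaining range $\gamma<p$ one argues directly: if $p>q$ one may pick $s\in(-n/\gamma,0)$ with $-n/q<s<-n/p$, for which the left side above is $+\infty$ while the right side is finite, so the hypothesis is void; if $p=q$ the same test, with $s\downarrow-n/p$, already gives $m\le-n(1-\rho)/2$.) The main obstacle throughout is the stationary/non‑stationary phase analysis yielding the sharp concentration of $f_\lambda$ near $|x|\asymp\lambda^{-\rho}$ with the correct amplitude and uniform error control; the rest is exponent arithmetic.

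For the ``in particular'' claim, recall that for $\alpha<0$ one has $|x|^\alpha\in RH_t(\mathbb{R}^n)$ iff $\alpha t>-n$. Hence $|x|^{qs}\in RH_{(p/q)(r'/p)'}$ exactly when $s>-n(\tfrac1p+\tfrac1r-1)$, and since $r\in[1,q)$ one checks $\tfrac1p+\tfrac1r-1>0$ and $\gamma_0:=(\tfrac1p+\tfrac1r-1)^{-1}\ge p$. Applying the first part with $\gamma=\gamma_0$ and simplifying, using $n(1-\rho)(1-\tfrac1r-\tfrac12)=-n(1-\rho)(\tfrac1r-\tfrac12)$, collapses the bound to $m\le-n(1-\rho)(\tfrac1r-\tfrac12)-\rho n(\tfrac1q-\tfrac1p)$, as asserted.
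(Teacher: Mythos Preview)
Your proof is correct and follows essentially the same approach as the paper: test the assumed boundedness on a function $f_\lambda$ whose Fourier transform cancels the oscillation $e^{i|\xi|^{1-\rho}}$ at frequency scale $\lambda$, control $f_\lambda$ by stationary phase (the paper invokes Littman's lemma), compute the weighted norms, compare exponents as $\lambda\to\infty$, and finally optimize in $s\downarrow -n/\gamma$. The only cosmetic differences are that the paper works with the dual pairing $|\langle a(D)f,g\rangle|\lesssim\|f\|_{L^q(|\cdot|^{qs})}\|g\|_{L^{p'}(|\cdot|^{-p's})}$ rather than the direct norm inequality, omits the harmless factor $|\xi|^{-m}$ from the test function, and obtains the off-critical decay by integrating by parts and reapplying Littman rather than by non-stationary phase; the paper's deduction of the ``in particular'' clause via $\gamma=p(r'/p)'$ is identical to yours.
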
 

\begin{proof}
Our assumption gives
\begin{eqnarray*}
|\lan a(D)f,g \ran| \lesssim {||f||}_{L^q({|\cdot|}^{qs})} {||g||}_{L^{p}{({|\cdot|}^{-p's}) }}
\end{eqnarray*}
for any $s \in (-n/ \gamma,0)$. We take a nonnegative function $\phi \in C^{\infty}_{0}$ such that $supp \ \phi \subset \{ 1/4 \le |\xi| \le 2\}$ and $\phi =1 \ on \ \{1/2 \le |\xi| \le1\}$, and let
\begin{eqnarray*}
\hat{f} (\xi)=  e^{-i{|\xi|}^{1-\rho} } \phi(\xi/R) ,
\end{eqnarray*}
and 
\begin{eqnarray*}
\check{g} (\xi)=\phi(\xi/R)
\end{eqnarray*}
for any $R>0$. Then, we have
\begin{eqnarray*}
|\lan a(D)f ,g \ran | &=& \left|\int {|\xi|}^{m} \phi(\xi/R) \phi(\xi/R) d\xi\right| \\
&\sim&R^{m+n}.
\end{eqnarray*}
On the other hands, we have
\begin{eqnarray*}
|f(x)| \lesssim \min \{R^{n(1+\rho)/2}, R^{n(1+\rho)/2-2\rho N} {|x|}^{-2N} \}
\end{eqnarray*}
for any $N \in \N$. In fact, Littman's lemma gives
\begin{eqnarray*}
|f(x)| &=& \left| \int  e^{ix\xi -i {|\xi|}^{1-\rho}  } \phi(\xi /R) d\xi \right| \\
&\le& R^n\sup_{z} \left| \int  e^{iz\xi +iR^{1-\rho} {|\xi|}^{1-\rho} } \phi(\xi) d\xi \right| \\
&\lesssim& R^n R^{-n(1-\rho)/2}\\
&=&R^{n(1+\rho)/2}.
\end{eqnarray*}
As  for second estimates, we have
\begin{eqnarray*}
|f(x)| &=& {|x|}^{-2N} \left| \int \Delta^{N}_{\xi} e^{ix\xi -i {|\xi|}^{1-\rho}  } \phi(\xi /R) d\xi \right| \\
&=& {|x|}^{-2N} \left| \int  e^{ix\xi}  \Delta^{N}_{\xi} (e^{ -i {|\xi|}^{1-\rho}  } \phi(\xi /R) )d\xi \right|\\
&\le& {|x|}^{-2N}  \sum_{|\alpha|=2N } \sum_{\beta \le \alpha} R^{-|\beta|}\left| \int  e^{ix\xi - it{|\xi|}^{1-\rho} } (P_{\alpha ,\beta}(\xi) )(\partial^{\beta} \phi)(\xi/R)d\xi \right|\\
\end{eqnarray*}
where $P_{\alpha, \beta}$ denotes the functions such that
\begin{eqnarray*}
{||\partial^{\sigma }P_{\alpha, \beta}(R \cdot)||}_{L^{\infty} } \lesssim{ R}^{-2\rho N+\rho |\beta|} 
\end{eqnarray*} 
on support of $\phi$ for any $\sigma \in \N^n$. From this and Littman's lemma, we obtain desired estimate. Therefore, we have
\begin{eqnarray*}
{||f||}_{L^q({|\cdot|}^{qs})} &\le& {\left( \int_{|x|\le R^{-\rho}  }{|f(x)|}^q {|x|}^{qs} dx \right)}^{1/q} +  {\left( \int_{|x|\ge R^{-\rho}  }{|f(x)|}^q {|x|}^{qs} dx \right)}^{1/q} \\
&\le&R^{ n(1+\rho)/2}  {\left( \int_{|x|\le R^{-\rho}  } {|x|}^{qs} dx \right)}^{1/q} +  R^{n(1+\rho) /2 -2\rho N} {\left( \int_{|x|\ge R^{-\rho}  } {|x|}^{qs-2qN} dx \right)}^{1/q}  \\
&\lesssim& R^{ n(1+\rho)/2-\rho n/q -\rho s},\\
\end{eqnarray*}
and
\begin{eqnarray*}
{||g||}_{L^{p'}{( {|\cdot|}^{-p's}) }} &=&R^n {\left(\int {|\hat{\phi}(Rx)|}^{p'} {|x|}^{-p's } dx \right)}^{1/p'}  \\
&\lesssim& R^{n-n/p'+s}\\
&=&R^{n/p+s}.
\end{eqnarray*}
From these observations, we obtain
\begin{eqnarray*}
R^{m+n} &\lesssim&  R^{  n(1+\rho)/2-\rho n/q -\rho s }R^{n/p+s}\\
R^{m} &\lesssim& R^{ -n(1/2-1/p)+n\rho(1/2-1/q)+s(1-\rho)} \\
R^{m} &\lesssim& R^{ -n(1-\rho)(1/2-1/p)-\rho n(1/q-1/p)+s(1-\rho)} \\
m&\le& -n(1-\rho)(1/2-1/p)-\rho n(1/q-1/p)-n(1-\rho)/\gamma
\end{eqnarray*}
Here, we take the infimum all over the $s\in(-n/\gamma,0)$ to obtain the final inequality. In particular, we have ${| \cdot |}^{qs} \in  RH_{(p/q)(r'/p)'}$ with $s \in (-n/p(r'/p)',0)$, that means
\begin{eqnarray*}
m &\le& -n(1-\rho)(1/2-1/p)-\rho n(1/q-1/p)-n(1-\rho)/p(r'/p)' \\
&=& -n(1-\rho)(1/2-1/p)-\rho n(1/q-1/p)-n(1-\rho)(1-p/r')/p\\
&=& -n(1-\rho)(1/r-1/2)-\rho n(1/q-1/p).
\end{eqnarray*}
by taking $\gamma = p(r'/p)'$.
\end{proof}
\begin{rem}
Since $e^{i {|\xi|}^{1-\rho}} {|\xi|}^m \notin S^{m}_{\rho,0}$, Proposition \ref{sharp} cannot be applied to the pseudodifferential operators associated with  symbols belonging to the H\"{o}rmander class directly. However, by the same proof of the proposition, it holds with $a\in S^{m}_{\rho,0}$ such that $a(\xi) =e^{i {|\xi|}^{1-\rho}} {|\xi|}^m$ for any $|\xi| >1$, that means a sharpness of weighted inequalities in Theorem \ref{Coifman-Fefferman} and $({\rm{i}})$ of Corollary \ref{pse Besov}.
\end{rem}
\appendix
\section{Appendix A}
To see the proof of Corollary \ref{pse Besov}, the operator norms of $a(x,D)$ on weighted Besov spaces are controlled by 
\begin{eqnarray*}
{({[{\omega}^q]}_{A_{q/r}} {[ {\omega}^q] }_{{RH}_{(p/q)( r'/p)' }})}^{\delta} { [\omega^{-p'}]}_{A_{p'}} .
\end{eqnarray*}
However, we can eliminate the factor ${ [\omega^{-p'}]}_{A_{p'}}$ by having the sparse form bounds $\phi_k *a(x,D)(\phi_j *\cdot )$ directly.
\begin{pro}
\noindent$({\rm{i}})$ Let $2 \le s \le \infty$ and $ 2/3 < \alpha \le1$, and $ a \in S^{m}_{\rho , \delta}$ with $m \le 0$, $0\le \delta < \rho \le 1$. Then for any $f ,g \in \sh$ and $j,k \in \Z_{\ge 0}$, there exists the sparse family $ \Sp $ such that
\begin{eqnarray*}
|\lan \phi_k *a(x,D)(\phi_j *f ), g \ran | \lesssim 2^{-k\ell}2^{j\ell+  j \kappa_1} \liminf_{ R\to \infty}  {\Lambda}^{\alpha}_{ \Sp ,2,s'}( (\phi_j *f )1_{Q_R},g).
\end{eqnarray*}
$({\rm{ii}})$ Let $ 2 \le s \le \infty$ and $s'/2 < \alpha \le 1$, and $ a \in S^{m}_{\rho , \delta}$ with $m \le 0$, $0\le \delta < \rho \le 1$. Then for any $f ,g \in \sh $, there exists the sparse family $ \Sp $ such that
\begin{eqnarray*}
|\lan \phi_k * a(x,D)(\phi_j*f) , g \ran | \lesssim 2^{-k \ell} 2^{j \ell +j \kappa_2}  \liminf_{R \to \infty}  {\Lambda}^{\alpha}_{ \Sp ,s',s'}(( \phi_j *f) 1_{Q_R}  ,g).
\end{eqnarray*}
\noindent$({\rm{iii}})$ Let $ 1\le s' \le r \le 2 \le s \le \infty$ and $ a \in S^{m}_{\rho , \delta}$ with $m \le 0$, $0\le \delta < \rho \le 1$. Then for any $f ,g \in \sh$, there exists the sparse family $\Sp $ such that
\begin{eqnarray*}
|\lan \phi_k * a(x,D)(\phi_j*f) , g \ran | \lesssim 2^{-k \ell} 2^{j \ell +j \kappa_3}  \liminf_{R \to \infty}  {\Lambda}_{ \Sp ,r ,s'}(( \phi_j *f) 1_{Q_R}  ,g).
\end{eqnarray*}
\end{pro}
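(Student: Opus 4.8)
The plan is to isolate, for each $\ell\in\R$, the two sources of the factor $2^{-k\ell}2^{j\ell}$ and then re-run the proofs of Theorem \ref{sparseform1} and Theorem \ref{sparseform2} for the composed operator. Since $\phi_j*f$ has Fourier support in $\{2^{j-1}\le|\xi|\le 2^{j+1}\}$, we have $a(x,D)(\phi_j*f)=\sum_{|i-j|\le 1}a_i(x,D)(\phi_j*f)$, so it suffices to bound $\lan T_{i,k}h,g\ran$ for $|i-j|\le 1$, where $T_{i,k}:=\phi_k*a_i(x,D)$ is the Fourier multiplier $\hphi_k(D)$ composed with $a_i(x,D)$ and $h:=\phi_j*f$. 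Because $f\in\sh$, the function $h$ is Schwartz but not compactly supported; I would write $h=\lim_{R\to\infty}h\,1_{Q_R}$ in $L^2$, apply the sparse estimate below to $h\,1_{Q_R}\in L^\infty_c$, and pass to $\liminf_{R\to\infty}$, which is legitimate since $v\mapsto\lan T_{i,k}v,g\ran$ is continuous on $L^2$ ($a_i(x,D)$ is $L^2$-bounded for fixed $i$, $\phi_k*(\cdot)$ is $L^2$-bounded, and $g\in\sh\subset L^2$). This is exactly the reduction carried out at the start of the proof of Theorem \ref{sparseform1}.

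The structural point is the operator identity, valid for every $\ell\in\R$,
\[
T_{i,k}=\bigl(\langle D\rangle^{-\ell}\phi_k*(\cdot)\bigr)\circ b_{\ell,i}(x,D),\qquad b_{\ell,i}(x,D):=\langle D\rangle^{\ell}a_i(x,D).
\]
Here $b_{\ell,i}(x,D)$ is a pseudodifferential operator with symbol in $S^{m+\ell}_{\rho,\delta}\subset S^{m+\ell}_{\rho,\rho}$ whose $\xi$-support is $\{2^{i-1}\le|\xi|\le 2^{i+1}\}$, so the Plancherel-type kernel bounds and the $L^2\to L^{p}$ bounds used for $a_i(x,D)$ in the proofs of Theorems \ref{sparseform1}--\ref{sparseform2} hold verbatim for $b_{\ell,i}(x,D)$ with an extra factor $2^{i\ell}\sim 2^{j\ell}$; and $\|\langle D\rangle^{-\ell}\phi_k\|_{L^1}\lesssim 2^{-k\ell}$ uniformly in $k\ge 0$, since for $k\ge 1$ this kernel is $2^{-k\ell}$ times an $L^1$-normalized Schwartz bump at scale $2^{-k}$ and $k=0$ is trivial. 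From this I would derive, with the prefactor $2^{-k\ell}2^{j\ell}$ throughout, the inputs used in the proofs of Theorems \ref{sparseform1}--\ref{sparseform2}: the $L^2\to L^{p_1}$ and $L^2\to L^2$ bounds for $T_{i,k}$ (the $L^1$-convolution composes harmlessly on every $L^p$), the pointwise bounds $M_{T_{i,k},\infty}h(x)\lesssim 2^{-k\ell}2^{j\ell}2^{jm+jn(1-\rho(1-\gamma))/2}M^{\gamma}(|h|^2)(x)^{1/2}$ and $M_{T_{i,k},\infty}h(x)\lesssim 2^{-k\ell}2^{j\ell}2^{jm+jn(1-\rho(1-\gamma))}M^{\gamma}h(x)$, and the estimate $M_{T_{i,k},p_1}h(x)\lesssim M_{p_1}(T_{i,k}h)(x)+2^{-k\ell}2^{j\ell}2^{jm+jn(1/2-1/p_1)}M^{1-2/p_1}(|h|^2)(x)^{1/2}$.

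Feeding these into Corollary \ref{inter1} exactly as in the proof of Theorem \ref{sparseform1} --- same choices $1/s=\theta/p_1$ and $1/\alpha=1/2-(1-\theta)/p_0-\theta/p_1+1$, and with the remaining hypothesis $\|T_{i,k}\|_{L^r\to L^{p,\infty}}$ supplied, as there, by the remark following Proposition \ref{sp} (it applies since $T_{i,k}$ maps $L^{s-\varepsilon}$ to $L^{s}$ boundedly) --- reproduces the $2^{j}$-exponent $\kappa_1$, because $\kappa_1$ is affine in $m$ and $b_{\ell,i}$ merely replaces $m$ by $m+\ell$; the surviving prefactors are precisely $2^{-k\ell}$ and $2^{j\ell}$. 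This gives (i). Items (ii) and (iii) follow in the same way, using Lemma \ref{inter3} (with $1/\alpha=1+(2/r-1)\gamma$) and, for (iii), the interpolation with the endpoint $\alpha=1$, exactly as in the proof of Theorem \ref{sparseform2}. Summing the three terms $|i-j|\le 1$ and taking $\liminf_{R\to\infty}$ completes the argument.

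I expect the main obstacle to be the verification of the pointwise bound for $M_{T_{i,k},\infty}$ (and of the tail in the $M_{T_{i,k},p_1}$ estimate). Since $M_{T,\infty}h(x)=\sup_{Q\ni x}\|T(h\,1_{(3Q)^{c}})\|_{L^\infty(Q)}$, one must cut $h$ outside $3Q$, apply the Plancherel-based estimate for $b_{\ell,i}(x,D)$ from the proof of Theorem \ref{sparseform1}, and then convolve with the scale-$2^{-k}$ bump $\langle D\rangle^{-\ell}\phi_k$; checking that this convolution costs exactly $2^{-k\ell}$ and otherwise leaves the form of the estimate intact --- irrespective of how $2^{-k}$ compares with the side length of $Q$ --- is where essentially all the work lies. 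This is routine, using the rapid decay of the kernel of $\langle D\rangle^{-\ell}\phi_k$ together with the comparability of $M^{\gamma}$ at points within distance $\lesssim 2^{-k}$, but it is the step I would take most care over.
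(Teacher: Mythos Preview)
Your proposal is essentially correct and follows the same architecture as the paper: write $T_{j,k}=(\langle D\rangle^{-\ell}\phi_k*\,\cdot\,)\circ b_\ell(x,D)$ with $b_\ell\in S^{m+\ell}_{\rho,\delta}$, re-derive the $L^p$ and $M_{T_{j,k},s}$ inputs with the extra factor $2^{-k\ell}2^{j\ell}$, and feed these into Corollary~\ref{inter1} and Lemma~\ref{inter3} exactly as in the proofs of Theorems~\ref{sparseform1}--\ref{sparseform2}.

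The one place where the paper is more explicit than your sketch is the $M_{T_{j,k},\infty}$ estimate for parts (ii)--(iii). You describe the convolution step as ``routine'' via rapid decay of $\langle D\rangle^{-\ell}\phi_k$; carried out directly this yields $M_{T_{j,k},\infty}h\lesssim 2^{-k\ell}2^{j\ell}2^{jm+jn(1-\rho(1-\gamma))}\,M(M^\gamma h)$, i.e.\ with an extra Hardy--Littlewood $M$. The paper instead splits $b_\ell(x,D)(h1_{(3Q)^c})$ into its parts on $2Q$ and $(2Q)^c$, and for the far piece computes the composite kernel
\[
\Phi(z)=\int \frac{dy}{(1+2^{2j\rho N}|z-y|^{2N})(1+2^{2kN}|y|^{2N})}
\]
by a case analysis $k\le j\rho$ versus $k>j\rho$, obtaining $M^\gamma h(x)$ without the extra $M$. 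This is what Lemma~\ref{inter3} literally requires. Your weaker bound with $MM^\gamma$ would also suffice after a harmless modification of Lemma~\ref{inter3} (replace $M^\gamma_r$ by $MM^\gamma_r$ in the definition of $E_0$; the weak-type exponent is unchanged since $M$ is bounded on $L^{q,\infty}$), but you should either carry out the $\Phi$ computation or note this adjustment explicitly. For part~(i) there is no discrepancy: the paper itself obtains $MM_2^\gamma$, which is all that is needed for the weak-type input to Corollary~\ref{inter1}.
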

\begin{proof}
We put
\begin{eqnarray*}
T_{j,k}f   := \phi_k *a(x,D)(\phi_j*f).
\end{eqnarray*}
Here, we remark that 
\begin{eqnarray*}
T_{j,k}   f &=& ({\lan D\ran }^{-\ell} \phi_k) *({\lan D \ran}^{\ell} a(x,D)(\phi_j*f)) \\
&=& ({\lan D\ran }^{-\ell} \phi_k) * b_{\ell }(x,D) (\phi_j *f),
\end{eqnarray*}
with some $b_{\ell} \in S^{m+\ell}_{\rho ,\delta}$. For any cube $Q$ and $x \ \in Q$, one has
\begin{eqnarray*}
&& {|| T_{j ,k}(f 1_{{(3Q)}^c})||}_{L^{\infty }(Q)} \\
&\le&   {|| ({\lan D\ran }^{-\ell} \phi_k) * [1_{{(2Q)}^c}b_{\ell }(x,D) (\phi_j *(f1_{{(3Q)}^c}))||}_{L^{\infty }(Q)} +2^{-k\ell} {||b_{\ell }(x,D) (\phi_j *(f1_{{(3Q)}^c}))||}_{L^{\infty }(2Q)} \\
&=:& f_0(x) + f_1(x) ,
\end{eqnarray*}
where
\begin{eqnarray*}
f_0(x)&:=&\sup_{Q\in x} {|| ({\lan D\ran }^{-\ell} \phi_k) * [1_{{(2Q)}^c}b_{\ell }(x,D) (\phi_j *(f1_{{(3Q)}^c}))||}_{L^{\infty}(Q)} ,\\
f_1(x)&:=&\sup_{Q \ni x} 2^{-k\ell}  {|| b_{\ell }(x,D) (\phi_j *(f1_{{(3Q)}^c}))||}_{L^{ \infty }(2Q)} .
\end{eqnarray*}
\noindent$({\rm{i}})$ Now, we have
\begin{eqnarray*}
f_0(x) &\lesssim& 2^{-k \ell} M [b_{\ell }(x,D) (\phi_j *(f1_{{(3Q)}^c}))](x) \\
&\lesssim& 2^{-k \ell} 2^{jm +j \ell +jn(1-\rho(1-\gamma))/2 } M M^{\gamma}_{ 2} f(x)
\end{eqnarray*}
for any $0 \le \gamma <1$. By using the argument in the proof of Theorem \ref{sparseform1}, it is not hard to see the
\begin{eqnarray*}
f_1(x)  \lesssim 2^{-k \ell} 2^{jm +j \ell +jn(1-\rho(1-\gamma))/2 }  M^{\gamma}_{ 2} f(x).
\end{eqnarray*}
Therefore, we obtain
\begin{eqnarray*}
{||M_{T_{j,k},\infty} f ||}_{L^2 \to L^{p_0 ,\infty}} \lesssim 2^{-k \ell} 2^{jm +j \ell +jn(1-\rho)/2 +jn\rho (1/2 -1/p_0) } 
\end{eqnarray*}
for any $p_0 \ge 2$. On the other hands, we have
\begin{eqnarray*}
{|| T_{j,k} ||}_{L^2 \to L^{p_1}} \lesssim 2^{-k\ell} 2^{jm+j \ell +jn(1/2-1/p)} \ \ and  \ \ {|| M_{T_{j,k} ,p_1} ||}_{L^2 \to L^{p_1,\infty}}\lesssim 2^{-k\ell} 2^{jm+j \ell +jn(1/2-1/p_1)}
\end{eqnarray*}
for any $p_1 \ge2 $. By interpolating them, we have desired sparse bounds.

\noindent$({\rm{ii}})$, $({\rm{iii}})$ It suffices to prove the pointwise estimate
\begin{eqnarray*}
f_0(x) +f_1(x) \lesssim  2^{-k \ell} 2^{jm +j \ell +jn(1-\rho(1-\gamma))}  M^{\gamma} f(x).
\end{eqnarray*}
We just handle the $f_0(x)$ since the estimate of $f_1(x)$ be obtained immediately from the proof of Theorem \ref{sparseform2}. For any $N \in \N$ and $h \in L^1$, we have
\begin{eqnarray*}
&& ({\lan D\ran }^{-\ell} \phi_k) * h (z)\lesssim 2^{ -k \ell +kn }\int {(1+ 2^{2k  N} {|z-y|}^{2N} )  }^{-1} |h(y)| dy,  \\
&&| b_{\ell }(x,D) (\phi_j *(f1_{{(3Q)}^c}))(y) | \lesssim 2^{j \ell + jm + jn } \int {(1+ 2^{2j \rho N} {|y-w|}^{2N} )  }^{-1} |f(w)|  1_{ {(3Q)}^c }(w)dw.
\end{eqnarray*}
Hence, we obtain
\begin{eqnarray*}
f_0(x)  \lesssim  2^{ -k \ell +kn }2^{j \ell + jm + jn}  \sup_{Q \ni x}{ ||\Phi * (|f| 1_{ {(3Q)}^c} ) ||}_{L^{\infty} (Q) },
\end{eqnarray*}
where $\Phi$ denotes the radial function
\begin{eqnarray*}
\Phi (z) 
=\int \frac{1}{1+2^{2j\rho N }{|z-y| }^{2N} } \cdot \frac{1}{1+2^{2k N }{|y| }^{2N} } dy.
\end{eqnarray*}
To complete the proof, we decompose the integral region:
\begin{eqnarray*}
\Phi (z)&=& \int_{2|y| <|z|} +\int_{2|z| \le |y|} +\int_{|z|/2 \le |y| <2|z|} .\\
\end{eqnarray*}
Since $ |z-y | \gtrsim|z|$ under the $2|y| < |z| $ or $ 2|z| \le |y|$, one has
\begin{eqnarray*}
\int_{2|y| <|z|} + \int_{2|z| \le |y|}  \lesssim \frac{2^{-kn}}{ 1+2^{2j \rho N} {|z|}^{2N} }.
\end{eqnarray*}
Furthermore, it is not hard to see that
\begin{eqnarray*}
\int_{ |z|/2 \le |y| <2|z|} \lesssim \min \left\{  \frac{ 2^{-j \rho n } }{ 1+2^{2kN} {|z|}^{2N}  } , \frac{ {|z|}^n  }{ 1+2^{2kN} {|z|}^{2N}  } \right\}.
\end{eqnarray*}
From them, for any $k \le j \rho$, we have
\begin{eqnarray*}
f_0(x) &\lesssim& 2^{-k\ell} 2^{j \ell + jm +jn(1-\rho) }  (2^{j \rho n\gamma} + 2^{kn\gamma} ) M^{\gamma} f(x) \\
&\le&2^{-k \ell} 2^{jm +j \ell +jn(1-\rho(1-\gamma))}  M^{\gamma} f(x).
\end{eqnarray*}
We assume $ k> j \rho$. Then, we have
\begin{eqnarray*}
\sup_{z \in Q} \int_{{(3Q)}^c} \frac{ {|z-w|}^n  }{ 1+2^{2kN} {|z-w|}^{2N}  } |f(w)| dw &\lesssim& \int \frac{ {|x-w|}^n  }{ 1+2^{2kN} {|x-w|}^{2N}  } |f(w)| dw \\
 &\le& \sum_{i  \in \Z}  \int_{|x-w| \sim 2^{-k} 2^i} \frac{ {|x-w|}^n  }{ 1+2^{2kN} {|x-w|}^{2N}  } |f(w)| dw \\
 & \lesssim& 2^{-kn} 2^{-kn(1-\gamma)} M^{\gamma}f(x)  \\
  & \lesssim& 2^{-kn} 2^{-j \rho n(1-\gamma)} M^{\gamma}f(x) ,
\end{eqnarray*}
which completes the proof.
\end{proof}


\section{Appendix B}
From Proposition \ref{sp}, the weak-type boundedness of $M_{T,s}$ is a sufficient condition to have the sparse domination. It is natural to ask whether such condition be a necessary condition or not. However, it seems that the answer of this question is negative from following observations.
\begin{pro}
\noindent$({\rm{i}})$ Let $1 \le r  <\infty$. Then, there exist $f \in L^{\infty}_{c}$ and collction of sparse families $\{ \Sp(Q)\}_{Q:cube}$, and measurable set $K$ which has a non-zero measure, such that
\begin{eqnarray*}
\sup_{Q \in x } {\|\Lambda_{\Sp(Q),r}(f1_{{(3Q)}^c }  ) \|}_{L^{\infty}(Q)} =\infty
\end{eqnarray*}
for any $x \in K$.

\noindent$({\rm{ii}})$ Let $1 \le r  <s \le\infty$. Then, there exist $f \in  L^{\infty}_{c}$ and collction of sparse families $\{ \Sp(Q)\}_{Q:cube}$, and measurable set $K$ which has a non-zero measure, such that
\begin{eqnarray*}
\sup_{Q \ni x} \sup_{  {\| g \|}_{L^{s'}(Q) }=1}{|Q|}^{1/s} \Lambda_{\Sp(Q), r ,s'} (f1_{\Rn \setminus 3Q} ,g) =\infty
\end{eqnarray*}
for any $x \in K$.
 
\end{pro}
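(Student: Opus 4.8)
The plan is to pin down one explicit function, namely $f=1_{Q_\star}$ with $Q_\star=[-1,1]^n$, to take $K=[-\tfrac12,\tfrac12]^n$ (so $|K|=1>0$), and, for each cube $Q$ whose center $c_Q$ lies in $K$ and whose side $\ell_Q$ is at most $\tfrac18$, to let $\Sp(Q)$ be a tower of concentric cubes wedged between $3Q$ and $Q_\star$; for every other cube put $\Sp(Q)=\emptyset$. The point of ``wedged between $3Q$ and $Q_\star$'' is that each cube $R$ of the tower satisfies $3Q\subset R\subset Q_\star=\operatorname{supp}f$, so that
\[
\langle f1_{(3Q)^c}\rangle_{r,R}=\Bigl(1-\tfrac{|3Q|}{|R|}\Bigr)^{1/r}\ge\bigl(1-(3/4)^n\bigr)^{1/r}=:c_0>0
\]
as soon as $\operatorname{side}(R)\ge 4\ell_Q$; a single sparse family can hold many such cubes, and letting $\ell_Q\to0$ along the cubes $Q_k(x):=x+[-2^{-k-1},2^{-k-1}]^n$, $x\in K$, drives the two suprema to $+\infty$.

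For part $(\mathrm i)$, given $x\in K$ and $Q=Q_k(x)$, let $\Sp(Q)=\{R^Q_0,\dots,R^Q_{N_Q}\}$ consist of the concentric dyadic cubes centered at $c_Q$ with $\operatorname{side}(R^Q_j)=2^{j}\cdot4\ell_Q$, $N_Q$ being the largest index with $2^{N_Q}\cdot4\ell_Q\le\tfrac12$; this is $(1-2^{-n})$-sparse (take $E_{R^Q_j}=R^Q_j\setminus R^Q_{j-1}$, $E_{R^Q_0}=R^Q_0$). One checks immediately that $3Q\subset R^Q_j\subset[-\tfrac34,\tfrac34]^n\subset Q_\star$ for all $j$, hence $\langle f1_{(3Q)^c}\rangle_{r,R^Q_j}\ge c_0$; moreover every $R^Q_j$ contains $Q$, so for every $y\in Q$,
\[
\Lambda_{\Sp(Q),r}(f1_{(3Q)^c})(y)=\sum_{j=0}^{N_Q}\langle f1_{(3Q)^c}\rangle_{r,R^Q_j}\ \ge\ (N_Q+1)\,c_0 .
\]
Thus $\|\Lambda_{\Sp(Q),r}(f1_{(3Q)^c})\|_{L^\infty(Q)}\ge (N_Q+1)c_0$, and since $N_Q\asymp\log_2(1/\ell_Q)\to\infty$ as $k\to\infty$, the supremum over $Q\ni x$ is infinite for each $x\in K$.

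For part $(\mathrm{ii})$ the same tower is insufficient once $s\le2$: for a family with a \emph{fixed} sparseness constant $\eta$, the Carleson packing bound $\sum_{R\in\Sp,\,R\subset R_0}|R|\le\eta^{-1}|R_0|$, together with $\langle f1_{(3Q)^c}\rangle_{r,R}\le1$, forces $|Q|^{1/s}\Lambda_{\Sp(Q),r,s'}(f1_{(3Q)^c},g)\lesssim_{\eta,f}|Q|^{1/s-1/s'}=|Q|^{2/s-1}$ for every admissible $g$ (supported in $Q$, $\|g\|_{L^{s'}(Q)}=1$), which is bounded and in fact tends to $0$ for $s\le2$. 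The cure is to let the sparseness constant degenerate with $\ell_Q$: with $\eta(Q):=|Q|/2$, let $\Sp(Q)=\{R^Q_j\}_{j=0}^{M_Q}$ be the concentric tower with $|R^Q_j|=(4\ell_Q)^n(1-|Q|)^{-j}$, $M_Q$ the largest index keeping $\operatorname{side}(R^Q_j)\le\tfrac12$. Since consecutive volumes are in ratio $1-|Q|$, $\Sp(Q)$ is $\eta(Q)$-sparse (hence a sparse family in the sense of the paper, even though $\eta(Q)\to0$), and again $3Q\subset R^Q_j\subset Q_\star$ gives $\langle f1_{(3Q)^c}\rangle_{r,R^Q_j}\ge c_0$. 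Taking $g:=|Q|^{-1/s'}1_Q$ (so $\|g\|_{L^{s'}(Q)}=1$ and, since $Q\subset R^Q_j$, $|R^Q_j|\langle g\rangle_{s',R^Q_j}=|R^Q_j|^{1/s}$) gives
\[
|Q|^{1/s}\Lambda_{\Sp(Q),r,s'}(f1_{(3Q)^c},g)\ \ge\ c_0\,|Q|^{1/s}\sum_{j=0}^{M_Q}|R^Q_j|^{1/s}.
\]
Here $|R^Q_j|$ grows geometrically up to $|R^Q_{M_Q}|\asymp1$, and $(1-|Q|)^{-1/s}-1\asymp_s|Q|$, so summing the geometric series gives $\sum_j|R^Q_j|^{1/s}\asymp|R^Q_{M_Q}|^{1/s}/|Q|\asymp|Q|^{-1}$; hence $|Q|^{1/s}\Lambda_{\Sp(Q),r,s'}(f1_{(3Q)^c},g)\gtrsim_{n,r,s}|Q|^{1/s-1}\to\infty$ as $k\to\infty$. (For $s=\infty$, read $1/s'=1$, $g=|Q|^{-1}1_Q$, $|Q|^{1/s}=1$, so $\Lambda=\sum_j\langle f1_{(3Q)^c}\rangle_{r,R^Q_j}\ge(M_Q+1)c_0\to\infty$.) This proves $(\mathrm{ii})$ for all $1\le r<s\le\infty$.

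The step I expect to be the genuine obstacle is the one highlighted for $(\mathrm{ii})$: realizing that a uniformly sparse choice of $\Sp(Q)$ is powerless when $s\le2$ — the front factor $|Q|^{1/s}$ and the exponent $2/s-1$ work against us — and therefore deliberately degenerating $\eta(Q)\to0$ and calibrating it ($\eta(Q)=|Q|/2$) so that the Carleson packing $\sum_j|R^Q_j|^{1/s}\asymp\eta(Q)^{-1}|Q|^{1/s}$ overtakes $|Q|^{1/s}$. After that the checks are routine: that the slowly growing tower is genuinely $\eta(Q)$-sparse with the stated constant, that each of its cubes lies in $Q_\star$ and contains $3Q$, and the elementary geometric-series estimate for $\sum_j|R^Q_j|^{1/s}$. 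It is also worth recording that $Q$ must be allowed to shrink: for a single fixed $Q$ with a fixed sparseness constant all of the relevant suprema are finite, so the divergence truly comes from $\sup_{Q\ni x}$.
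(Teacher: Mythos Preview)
For part (i) your argument is essentially the paper's: both fix $f=1_{Q_0}$ and use a concentric tower (you take ratio $2$, the paper takes ratio $3$ and writes $\Sp(Q)=\{3^kQ:k\ge1\}$), observing that each of the $\sim\log(1/\ell_Q)$ cubes wedged between $3Q$ and $Q_0$ contributes a fixed positive average, so the $L^\infty(Q)$ norm diverges.

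For part (ii) the approaches diverge. The paper simply reuses the tower $\{3^kQ\}$ and, testing against $g=|Q|^{-1/s'}1_Q$, computes
\[
\sup_{\|g\|_{L^{s'}(Q)}=1}\Lambda_{\Sp(Q),r,s'}(f1_{(3Q)^c},g)\ \gtrsim\ \sum_{k=1}^{N}|3^kQ|^{1-1/s'}\ \sim\ |Q|^{1/s}3^{Nn/s}\ \sim\ |Q_0|^{1/s},
\]
a fixed constant; multiplied by the printed prefactor $|Q|^{1/s}$ this tends to $0$, so the paper's computation only gives the asserted divergence if the prefactor is read as $|Q|^{-1/s}$ (the normalization that actually matches $M_{T,s}$). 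Your construction instead lets the sparseness constant $\eta(Q)\sim|Q|$ degenerate, packs $\asymp|Q|^{-1}$ cubes of bounded size into $\Sp(Q)$, and obtains $|Q|^{1/s}\Lambda\gtrsim|Q|^{1/s-1}\to\infty$; this establishes the inequality exactly as printed, and a fortiori with $|Q|^{-1/s}$. So your argument is the more robust one here.

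One small correction to your commentary: the heuristic that a fixed-$\eta$ tower ``is insufficient once $s\le2$'' understates the obstruction. Your Carleson packing estimate $|Q|^{2/s-1}$ is only an upper bound; for any nested tower with ratio bounded away from $1$ one actually has $|Q|^{1/s}\sup_g\Lambda\asymp|Q|^{1/s}\to0$ for \emph{every} finite $s$, so the degenerate-$\eta$ construction is genuinely needed throughout the range, not just for $s\le2$. Since you in fact deploy it for all $s$, the proof itself is unaffected.
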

\begin{proof}
\noindent$({\rm{i}})$ Let fix a cube $Q_0$ and let $f=1_{Q_0}$. Furthermore, we define sparse collection $\Sp(Q)$ for any cube $Q$ by
\begin{eqnarray*}
\Sp(Q)=\{ 3^kQ \  ; \  k =1,2,3, \cdots \}.
\end{eqnarray*}
For any cube $3Q \subset Q_0$ and $z \in Q$, we choose $N \in \N$ such that $3^{N+1}Q \cap {Q}^{c}_{0} \neq \emptyset $ and $3^N  Q \subset Q_0$. Then, we have
\begin{eqnarray*}
\Lambda_{\Sp(Q),r}(f1_{\Rn\setminus  3Q}  ) (z)  1_Q(z)&=&  \sum^{\infty}_{ k=1 }  {\lan 1_{Q_0 \setminus 3Q} \ran}_{r, 3^kQ} 1_{Q}(z) \\
&\ge& \sum^{N}_{ k=1} {\left( \frac{|3^kQ \setminus 3Q| }{|3^kQ|}  \right)}^{1/r} 1_{Q}(z) \\
&\gtrsim& N1_Q(z),
\end{eqnarray*}
which yields
\begin{eqnarray*}
{\|\Lambda_{\Sp(Q),r}(f1_{\Rn \setminus  3Q}  ) \|}_{L^{\infty}(Q)} \gtrsim N.
\end{eqnarray*}
Since $N \to \infty $ at $|Q| \to 0$, we have
\begin{eqnarray*}
\sup_{Q \in x } {\|\Lambda_{\Sp(Q),r}(f1_{\Rn \setminus  3Q}  ) \|}_{L^{\infty}(Q)} =\infty
\end{eqnarray*}
for any $x \in Q_0$.
 
  \ 
  
\noindent$({\rm{ii}})$ By taking $f$ and $\Sp(Q)$ as above, we have 
\begin{eqnarray*}
\sup_{  {\|g\|}_{L^{s'}(Q) }=1} \Lambda_{\Sp(Q), r ,s'} (f 1_{\Rn \setminus 3Q} ,g)&=&\sup_{  {\|g\|}_{L^{s'}(Q) }=1} \sum^{\infty}_{k=1} |3^kQ| {\lan 1_{Q_0 \setminus 3Q}  \ran}_{r,3^kQ} {\lan g \ran}_{s',3^kQ} \\
&\ge& \sum^{N}_{k=1} |3^kQ| {\left( \frac{|3^kQ \setminus 3Q|}{|3^kQ|}  \right)}^{1/r}  {|Q|}^{-1/s'}{\left( \frac{|Q| }{|3^kQ|}  \right)}^{1/s'} \\
&\gtrsim&{|Q|}^{1/s} 3^{Nn/s}, 
\end{eqnarray*}
which complete the proof.
\end{proof}

$\bf{Acknowledgement}$. The author would like to thank Professor Mitsuru Sugimoto for constructive comments.


\end{document}